\newcommand{\R}{\mathbb{R}}
\newcommand{\C}{\mathbb{C}}
\newcommand{\Z}{\mathbb{Z}}
\newcommand{\N}{\mathbb{N}}
\newcommand{\Q}{\mathbb{Q}}
\newcommand{\pre}{\mathrm{par}}
\newcommand{\In}{\mathrm{In}}
\newcommand{\dom}{\mathcal{D}}
\newcommand{\sgn}{\mathrm{sgn}}
\newcommand{\mesh}{\mathrm{mesh}}
\newcommand{\lvl}{\mathrm{lv}}
\newcommand{\clo}{\mathrm{cl}}
\newcommand{\coleqq}{\vcentcolon=}
\newcommand{\eisom}{\stackrel{e}{\sim}}
\newcommand{\fisom}{\stackrel{f}{\sim}}
\newcommand{\outmap}[2]{\left\langle{#1}\right\rangle^{#2}}
\newcommand{\outmapNOadap}[2]{\langle{#1}\rangle^{#2}}
\newcommand{\outmapT}[3]{\left\langle{#1}\right\rangle^{#2,\,#3}}
\newcommand{\outmapTNOadap}[3]{\langle{#1}\rangle^{#2,\,#3}}
\theoremstyle{theorem}
\newtheorem{theorem}{Theorem}
\newtheorem{lemma}{Lemma}
\newtheorem{prop}{Proposition}
\theoremstyle{definition}
\newtheorem{definition}{Definition}
\theoremstyle{remark}
\newtheorem*{remark}{Remark}
\theoremstyle{definition}
\newtheorem{assum}{Assumptions}
\begin{document}
\title{\huge Neural Network Identifiability for \\ a Family of Sigmoidal Nonlinearities}
\author{\IEEEauthorblockN{Verner Vla\v{c}i\'{c} and Helmut B\"olcskei  \\[0.4cm]}
\IEEEauthorblockA{
Dept. of EE and Dept. of Math., ETH Zurich, Switzerland\\
 [-0.1cm] Email: vlacicv@mins.ee.ethz.ch, hboelcskei@ethz.ch}
}
\maketitle
\thispagestyle{empty}
\begin{abstract}  
	This paper addresses the following question of neural network identifiability: Does the input-output map realized by a feed-forward neural network with respect to a given nonlinearity uniquely specify the network architecture, weights, and biases? 
Existing literature on the subject \cite{Sussman1992,Sontag1993,Fefferman1994} suggests that the answer should be \emph{yes, up to certain symmetries induced by the nonlinearity, and provided the networks under consideration satisfy certain ``genericity conditions''}.  
The results in \cite{Sussman1992} and \cite{Sontag1993} apply to networks with a single hidden layer and in \cite{Fefferman1994} the networks need to be fully connected.  
In an effort to answer the identifiability question in greater generality, we derive \emph{necessary} genericity conditions for the identifiability of neural networks of arbitrary depth and connectivity with an arbitrary nonlinearity. Moreover, we construct a family of nonlinearities for which these genericity conditions are \emph{minimal}, i.e., both \emph{necessary and sufficient}. This family is large enough to approximate many commonly encountered nonlinearities to within arbitrary precision in the uniform norm. 
\end{abstract}
\section{Introduction}
Deep learning has become a highly successful machine learning method employed in a wide range of applications such as optical character recognition \cite{lecun:1995MNIST},
image classification \cite{Krizhevsky2012Imagenet}, and speech recognition \cite{Hint2012acoustic}. In a typical deep learning scenario one aims to fit a parametric model, realized by a deep neural network, to match a set of training data points. In order to make the ensuing discussion more concrete, we begin with the definition of a neural network and the map it realizes under a nonlinearity.

\begin{definition}[Neural network]\label{VanillaLayeredArch}
We call an ordered sequence
\begin{equation*}
\mathcal{N}=(D_0,D_1,\dots,D_L;W^1,\theta^1,W^2,\theta^2,\dots,W^L,\theta^L),
\end{equation*}
a neural network, where
\begin{itemize}[--]
 \item $L$ is a positive integer, referred to as the depth of $\mathcal{N}$,
\item $(D_0,D_1,\dots, D_L)$ is an $(L+1)$-tuple of positive integers, called the layout,
	\item $W^\ell={(W_{jk}^\ell)} 
	\in \R^{D_{\ell}\times D_{\ell-1}}$
, $\ell\in\{1,\dots,L\}$, are matrices whose entries are referred to as the network's weights, and
\item $\theta^\ell={(\theta_j^\ell)} 
\in\R^{D_\ell}$, $\ell\in\{1,\dots,L\}$, are vectors of the so-called biases.
\end{itemize}
Furthermore, we stipulate that none of the $W^\ell$, $\ell\in\{1,\dots,L\}$, have an identically zero row or an identically zero column.  
\end{definition}
\begin{definition}\label{VanillaLayeredRealiz}
Given a neural network $\mathcal{N}$ and a nonlinear function $\rho:\R\to\R$, referred to as the nonlinearity, we define the \emph{map realized by $\mathcal{N}$ under $\rho$} as the function $\outmapNOadap{\mathcal{N}}{\rho}:\R^{D_0}\to \R^{D_L}$ given by
\begin{equation*}
\outmapNOadap{\mathcal{N}}{\rho}(x)=\rho\,(W^L(\rho\,(W^{L-1}(\dots \rho\,(W^1 x +\theta^1 )\dots)+\theta^{L-1})) +\theta^L), \quad x\in \R^{D_0},
\end{equation*}
where $\rho$ acts on real vectors in a componentwise fashion.
\end{definition}
\noindent The requirement that the matrices $W^\ell$ in Definition \ref{VanillaLayeredArch} have nonzero rows corresponds to the absence of nodes whose contributions depend on the biases only, and are therefore constant as functions of the input. Similarly, columns that are identically zero correspond to nodes whose contributions do not enter the computation at the next layer.
The map of a neural network failing this requirement can be realized by a network obtained by simply removing such spurious nodes.
In practical applications, the numbers $L,D_0,D_1,\dots, D_L$ are typically determined through heuristic considerations, whereas the coefficients $ W^\ell ,\,\theta^\ell$ of the affine maps $x\mapsto W^\ell x+\theta^\ell$ are learned based on training data. For an overview of practical techniques for deep learning, see \cite{Bengio2016}.
Neural networks are often studied as mathematical objects in their own right, for instance in approximation theory \cite{Boelcskei2019,Mallat2012,Petersen2018,Wiatowski2018} and in control theory \cite{Albertini1993,SontagMTNS1993}. In this context, a natural question is that of identification: Can a neural network be uniquely identified from the map it is to realize? Specifically, we will be interested in identifiability according to the following definition. 
\begin{definition}[Identifiability]
Given positive integers $D_{in}$ and $D_{out}$, define $\mathscr{N}^{D_{in},D_{out}}$ to be the set of all neural networks whose layouts $(D_0,\dots,D_L)$ satisfy $D_0=D_{in}$ and $D_L=D_{out}$, but are otherwise arbitrary. Let $\mathscr{N}$ be a subset of  $\mathscr{N}^{D_{in},D_{out}}$, $\rho$ a nonlinearity, and $\sim$ an equivalence relation on $\mathscr{N}^{D_{in},D_{out}}$.
\begin{enumerate}[(i)]
\item 
We say that $\sim$ is \emph{compatible with} $(\mathscr{N},\rho)$ if, for all $\mathcal{N}_1,\mathcal{N}_2\in\mathscr{N}$,
\begin{equation*}
\mathcal{N}_1\sim \mathcal{N}_2 \quad\implies\quad \outmapNOadap{\mathcal{N}_1}{\rho}(x)=\outmapNOadap{\mathcal{N}_2}{\rho}(x),\;\forall x\in \R^{D_{in}}.
\end{equation*}
\item  
We say that $(\mathscr{N},\rho)$ \emph{is identifiable up to }$\sim$ if, for all $\mathcal{N}_1,\mathcal{N}_2\in\mathscr{N}$,
\begin{equation*}
\outmapNOadap{\mathcal{N}_1}{\rho}(x)=\outmapNOadap{\mathcal{N}_2}{\rho}(x),\;\forall x\in \R^{D_{in}}\quad\implies\quad  \mathcal{N}_1\sim \mathcal{N}_2.
\end{equation*}
\end{enumerate}
\end{definition}
\noindent Thus, by informally saying that a neural network $\mathcal{N}_1$ in a certain class is identifiable, we mean that any neural network $\mathcal{N}_2$ in the same class giving rise to the same output map, i.e., $\outmapNOadap{\mathcal{N}_1}{\rho}=\outmapNOadap{\mathcal{N}_2}{\rho}$, is necessarily equivalent to $\mathcal{N}_2$. The role of the equivalence relation $\sim$ in the previous definition is thus to ``measure the degree of non-uniqueness'', and in particular, to accommodate symmetries within the network that may arise either from symmetries induced by the network weights and biases (such as the presence of clone pairs, to be introduced in Definition \ref{FeffNoClones}), symmetries of the nonlinearity (e.g., $\tanh$ is odd), or both simultaneously. These abstract concepts will be incarnated momentarily when discussing the seminal work by Fefferman \cite{Fefferman1994}, and in Section II through Definitions \ref{def:TrueIso1st} and \ref{FeffNoClones}, as well as in the examples leading up to the formulation of the paper's main results.

In \cite{Fefferman1994}, Fefferman showed that neural networks satisfying the following genericity conditions are, indeed, uniquely determined by the map they realize under the nonlinearity $\rho=\tanh$, up to certain obvious isomorphisms of networks:

\newpage
\begin{assum}[Fefferman's genericity conditions]\label{ass:FeffGen}\hfil
\begin{enumerate}[(i)]
\item $\theta^\ell_j\neq 0$, for all $\ell$ and $j$, and $|\theta^\ell_j|\neq |\theta^\ell_{j'}|$, for all $\ell$ and $j,j'$ with $j\neq j'$.
\item $W^\ell_{jk}\neq 0$, for all $\ell$, $j$, and $k$, and
\item for all $\ell$, $k$ and $j,j'$ with $j\neq j'$,  
\begin{equation*}
W^\ell_{jk}/W^\ell_{j'k}\notin\left\{p/q:p,q\in\Z,\; 1\leq q\leq 100 D_\ell^2\right\}.
\end{equation*}
\end{enumerate}
\end{assum}

 More precisely, for fixed positive integers $D_{in}$ and $D_{out}$, Fefferman showed that $(\mathscr{N}_{A1}^{D_{in},D_{out}},\tanh)$ is identifiable up to $\sim_{\pm}$, where $\mathscr{N}_{A1}^{D_{in},D_{out}}$ is defined as the set of all neural networks in $\mathscr{N}^{D_{in},D_{out}}$ satisfying Assumptions \ref{ass:FeffGen}, and $\sim_\pm$ is defined by stipulating that $\mathcal{N}\sim_{\pm}\widetilde{\mathcal{N}}$ if and only if  
\begin{enumerate}[(i)]
\item $L=\widetilde{L}$ and $(D_0,D_1,\dots,D_L)=(\widetilde{D}_0,\widetilde{D}_1,\dots,\widetilde{D}_L)$, and
\item there exists a collection of signs $\{\epsilon^\ell_j:0\leq \ell\leq L,1\leq j\leq D_\ell \}$, $\epsilon^\ell_j\in\{-1,+1\}$, and permutations $\gamma_\ell:\{1,\dots,D_\ell\}\to \{1,\dots,D_\ell\}$ such that
\begin{itemize}[--]
\item $\gamma_\ell$ is the identity permutation and $\epsilon^\ell_j=+1$ , $j\in\{1,\dots, D_\ell\}$, whenever $\ell=0$ or $\ell=L$, and
\item for all $\ell\in\{1,\dots,L\}$, $k\in\{1,\dots,D_{\ell-1}\}$, and $j\in\{1,\dots,D_\ell\}$, 
\begin{equation*}
\widetilde{W}^\ell_{jk}=\epsilon^\ell_jW^\ell_{\gamma_\ell(j)\gamma_{\ell-1}(k)}\epsilon^{\ell-1}_{k},\quad \text{and}\quad \widetilde{\theta}^\ell_j=\epsilon_j^\ell\theta_{\gamma_\ell(j)}.
\end{equation*}
\end{itemize}
\end{enumerate}
It can be verified that $\sim_{\pm}$ is an equivalence relation on $\mathscr{N}_{A1}^{D_{in},D_{out}}$.
Networks $\mathcal{N}$, $\widetilde{\mathcal{N}}$ such that $\mathcal{N}\sim_{\pm}\widetilde{\mathcal{N}}$ are said to be \emph{isomorphic up to sign changes}.
The permutations $\gamma_\ell$ reflect the fact that the ordering of the neurons in the hidden layers $1,\dots,L-1$ is not unique, whereas the freedom in choosing the signs $\epsilon^\ell_j$ reflects that $\tanh$ is an odd function. It can be verified that any two networks isomorphic up to sign changes give rise to the same map under the $\tanh$ nonlinearity, so $\sim_{\pm}$ is compatible with $(\mathscr{N}_{A1}^{D_{in},D_{out}},\tanh)$. The crux of Fefferman's result therefore lies in proving the converse statement, namely that two networks giving rise to the same map with respect to $\tanh$ are necessarily isomorphic up to sign changes. This is effected by the insight that the depth, the layout, and the weights and biases of a network $\mathcal{N}\in \mathscr{N}^{D_{in},D_{out}}_{A1}$ are encoded in the geometry of the singularities of the analytic continuation of $\outmapNOadap{\mathcal{N}}{\tanh}$.
 
We note that Fefferman distilled the precise conditions of Assumptions \ref{ass:FeffGen} from his proof technique, in order to define a class of neural networks that is, on the one hand, sufficiently small to guarantee identifiability, and on the other hand, sufficiently large to encompass ``generic'' networks. Indeed, if we consider the network weights and biases $(W^1,\theta^1,\dots,W^L,\theta^L)$ as elements of the space $\R^{D_{1}\times D_{0}}\times \R^{D_{1}}\times\dots\times\R^{D_{L}\times D_{L-1}}\times \R^{D_{L}}$, then Assumptions \ref{ass:FeffGen} rule out only a set of measure zero.
In the contemporary practical machine learning literature, however, a network satisfying Assumptions \ref{ass:FeffGen} would hardly be considered generic, as Part (i) of Assumptions \ref{ass:FeffGen} implies that all biases are nonzero, and Part (ii) imposes full connectivity throughout the network.  
 
Indeed, Fefferman remarks explicitly that it would be interesting to replace Assumptions \ref{ass:FeffGen} with minimal hypotheses, and to study nonlinearities other than $\tanh$. The present paper aims to address these two issues.  
Characterizing the fundamental nature of conditions necessary for identifiability with respect to a fixed nonlinearity, even a simple one such as $\tanh$, is likely a rather formidable task.
In fact, the minimal identifiability conditions may generally depend on ``fine'' properties of the nonlinearity under consideration, and it is hence unclear how much insight can be obtained by having conditions that are specific to a given nonlinearity.
We will thus be interested in an identification result with very mild conditions on the weights and biases of the neural networks to be identified, while still accommodating a broad class of nonlinearities.

\section{Contributions}
We begin with two motivating examples. These lead up to the statements of our main contributions, whose corresponding proofs are developed in the remainder of the paper.
We consider nonlinearities $\rho$ which are not necessarily odd (as $\tanh$), and thus need an equivalence relation which dispenses with sign changes.
\begin{definition}[Neural network isomorphism]\label{def:TrueIso1st}
We say that the neural networks $\mathcal{N}$ and $\widetilde{\mathcal{N}}$ are isomorphic, and write  $\mathcal{N}\simeq\widetilde{\mathcal{N}}$, if
\begin{enumerate}[(i)]
\item $L=\widetilde{L}$ and $(D_0,D_1,\dots,D_L)=(\widetilde{D}_0,\widetilde{D}_1,\dots,\widetilde{D}_L)$, and
\item there exist permutations $\gamma_\ell:\{1,\dots,D_\ell\}\to \{1,\dots,D_\ell\}$ such that
\begin{itemize}[--]
\item $\gamma_\ell$ is the identity permutation for $\ell=0$ and $\ell=L$, and
\item for all $\ell\in\{1,\dots,L\}$, $k\in\{1,\dots,D_{\ell-1}\}$, and $j\in\{1,\dots,D_\ell\}$, 
\begin{equation*}
\widetilde{W}^\ell_{jk}=W^\ell_{\gamma_\ell(j)\gamma_{\ell-1}(k)},\quad \text{and}\quad \widetilde{\theta}^\ell_j=\theta_{\gamma_\ell(j)}.
\end{equation*}
\end{itemize}
\end{enumerate}
\end{definition}
\noindent In the remainder of the paper we will work exclusively with isomorphisms in the sense of Definition \ref{def:TrueIso1st}.
Note that any two isomorphic networks give rise to the same map with respect to any nonlinearity $\rho$,
and thus $\simeq$ is an equivalence relation compatible with any pair $(\mathscr{N},\rho)$.
The requirement that $\gamma_\ell$ be the identity map for $\ell\in\{0,L\}$ in the previous definition again corresponds to the fact that the inputs and the outputs of a neural network are not generally interchangeable. Indeed, suppose that $\mathcal{N}^\rho:\R^2\to \R^2$, $\mathcal{N}^\rho(x,y)=(x,2y)$ is the map of a neural network with respect to some nonlinearity $\rho$. Let $\mathcal{N}_1$, $\mathcal{N}_2$, and $\mathcal{N}_3$ be the networks obtained from $\mathcal{N}$ by interchanging the inputs of $\mathcal{N}$, the outputs of $\mathcal{N}$, and both inputs and outputs, respectively. Then $\mathcal{N}_1^\rho(x,y)=(y,2x)$, $\mathcal{N}_2^\rho(x,y)=(2y,x)$, and $\mathcal{N}_3^\rho(x,y)=(2x,y)$ are, indeed, distinct functions.
We now give an example that Fefferman uses to motivate the necessity of restricting the class of all neural networks $\mathscr{N}^{D_{in},D_{out}}$ to a smaller class to be identifiable up to an equivalence relation. In Fefferman's case, the equivalence relation is $\sim_{\pm}$, but the example is equally pertinent to the relation $\simeq$.
Suppose that $\mathcal{N}$ is a neural network with $L\geq 2$, and $\ell_0,j_1,j_2$ with $1\leq \ell_0 \leq L-1$ and $1\leq j_1<j_2\leq D_{\ell_0}$ are such that $\theta^{\ell_0}_{j_1}=\theta^{\ell_0}_{j_2}$ and $W^{\ell_0}_{j_1k}=W^{\ell_0}_{j_2k}$, for all $k$. Then, if $\widetilde{\mathcal{N}}$ is obtained from $\mathcal{N}$ by replacing $W_{1j_1}^{\ell_0+1}$ and $W_{1j_2}^{\ell_0+1}$ with an arbitrary pair of numbers $\widetilde{W}_{1j_1}^{\ell_0+1}$ and $\widetilde{W}_{1j_2}^{\ell_0+1}$ such that $W_{1j_1}^{\ell_0+1}+W_{1j_2}^{\ell_0+1}=\widetilde{W}_{1j_1}^{\ell_0+1}+\widetilde{W}_{1j_2}^{\ell_0+1}$, then $\outmapNOadap{\widetilde{\mathcal{N}}}{\rho}= \outmapNOadap{\mathcal{N}}{\rho}$, for any $\rho$. This example motivates the following definition.
\begin{definition}[No-clones condition]\label{FeffNoClones}
 
Let $\mathcal{N}$ be a neural network as in Definition \ref{VanillaLayeredArch}.
We say that $\mathcal{N}$ has a clone pair if there exist $\ell\in\{1,\dots, L\}$ and $j,j'\in\{1,\dots, D_\ell\}$ with $j\neq j'$ such that
\begin{equation*}
(\theta^\ell_{j},W^\ell_{j1},\dots,W^\ell_{jD_{l-1}})=(\theta^\ell_{j'},W^\ell_{j'1},\dots,W^\ell_{j'D_{\ell-1}}).
\end{equation*}
If $\mathcal{N}$ does not have a clone pair, we say that $\mathcal{N}$ satisfies the no-clones condition.

\end{definition}
\noindent  
As the nonlinearity $\rho$ in the example above is completely arbitrary, the no-clones condition is necessary to have any hope of obtaining identifiability up to $\simeq$.
Hence, with our program in mind, given positive integers $D_{in}$ and $D_{out}$, we define 
\begin{equation*}
\mathscr{N}^{D_{in},D_{out}}_{nc}=\{\mathcal{N}\in\mathscr{N}^{D_{in},D_{out}}\,:\, \mathcal{N}\text{ satisfies the no-clones condition}\},
\end{equation*}
and seek nonlinearities $\rho$ such that $(\mathscr{N}^{D_{in},D_{out}}_{nc},\rho)$ is identifiable up to $\simeq$.
As any class strictly containing $\mathscr{N}^{D_{in},D_{out}}_{nc}$, paired with any nonlinearity, fails identifiability up to $\simeq$, the no-clones condition furnishes a canonical minimal assumption for identifiability up to $\simeq$.
Similarly to $\mathscr{N}^{D_{in},D_{out}}_{A1}$, the class $\mathscr{N}^{D_{in},D_{out}}_{nc}$, paired with any measurable nonlinearity $\rho$ such that $\displaystyle \lim_{x\to \infty}\rho(x)$ and $\displaystyle \lim_{x\to-\infty}\rho(x)$ exist and are not equal, satisfies the universal approximation property in the sense of Hornik \cite{Hornik1989} and Cybenko \cite{Cybenko1989}.
The following example demonstrates that insisting on the no-clones condition as the only assumption on the weights, biases, and layout will necessarily come at the cost of restricting the class of nonlinearities that allow for identifiability. 
Let $\rho(x)=\min\{1,\max\{0,x\}\}$ be the clipped rectified linear unit (ReLU) function. Note that  
\begin{equation*} 
\rho\,\left(\rho\,(x)-\frac{1}{2}\rho\,(2x)-\frac{1}{2}\rho\,(2x-1)\;+0\right)=0, \quad\text{for all }x\in\R.
\end{equation*}
Now, given an arbitrary neural network $\mathcal{N}=(W^1,\theta^1,W^2,\theta^2,\dots,W^L,\theta^L)$ with $D_L=1$ satisfying the no-clones condition, the network
\begin{equation*}
\mathcal{N}_0=\left(W^1,\theta^1,W^2,\theta^2,\dots,W^L,\theta^L,\Big(\begin{smallmatrix}1\\2\\ 2\end{smallmatrix}\Big),\Big(\begin{smallmatrix}0\\0\\-1\end{smallmatrix}\Big), \big(1\;\,-\hspace{-0.1em}\frac{1}{2}\;\, -\hspace{-0.1em}\frac{1}{2}\big),0\right)
\end{equation*} also satisfies the no-clones condition, and yields the identically-zero output, i.e., $\mathcal{N}_0^\rho\equiv 0$. We have thus constructed an infinite collection of distinct networks satisfying the no-clones condition and all yielding the identically-zero map. The class of identically-zero output maps therefore contains networks of different depths and layouts, and thus identifiability up to $\simeq$ fails.  This leads to the conclusion that a uniqueness result for neural networks with the clipped ReLU nonlinearity would need to encompass genericity conditions more stringent than the no-clones condition.
Nonetheless, we are able to construct a class of real meromorphic nonlinearities $\sigma$ yielding identifiability without any assumptions on the neural networks beyond the no-clones condition, and which is large enough to uniformly approximate any piecewise $C^1$ nonlinearity $\rho$ with $\rho'\in BV(\R)$, where
\begin{equation*}
BV(\R)=\Bigg\{f\in L^1(\R):\|f\|_{BV(\R)}:=\sup_{\substack{\varphi\in C_c^1(\R) \\ \|\varphi\|_{L^\infty(\R)}\leq 1}}\int_\R f(x)\varphi'(x)\mathrm{d}x\;<\infty\Bigg\}
\end{equation*}
is the space of functions of bounded variation on $\R$.
 
Concretely, we have the following main result of this paper.
\begin{theorem}[Uniqueness Theorem]\label{UniquenessTheoremIntro}
Let $D_{in}$ and $D_{out}$ be arbitrary positive integers. Furthermore, let $\rho$ be a piecewise $C^1$ function with  
$\rho'\in BV(\R)$ and let $\epsilon>0$. Then there exists a meromorphic function $\sigma:\dom\to \C$, $\dom\supset\R$, $\sigma(\R)\subset\R$ such that $\|\rho-\sigma\|_{L^\infty(\R)}<\epsilon$ and $(\mathscr{N}^{D_{in},D_{out}}_{nc},\sigma)$ is identifiable up to $\simeq$.

\end{theorem}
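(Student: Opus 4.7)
The plan is to proceed in two stages. First, I would isolate a class $\mathcal{S}$ of real-valued meromorphic functions $\sigma:\dom\to\C$ with $\dom\supset\R$ and $\sigma(\R)\subset\R$ for which $(\mathscr{N}^{D_{in},D_{out}}_{nc},\sigma)$ is identifiable up to $\simeq$. Second, I would show that $\mathcal{S}$ is dense, in the $L^\infty(\R)$-norm, in the space of piecewise $C^1$ functions $\rho$ with $\rho'\in BV(\R)$. Combining the two stages yields the theorem.

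For the identifiability stage, I would adopt Fefferman's singularity-geometry philosophy but in a form tailored to the weaker, no-clones-only hypothesis. Concretely, I would take $\sigma$ to be meromorphic with a very simple, ``generic'' pole structure, say one conjugate pair of simple poles at $\alpha\pm i\beta\in\C\setminus\R$, with $\alpha,\beta$ chosen so that no nontrivial algebraic relations hold among the quantities that will arise after propagation through the network (transcendence-type genericity). The analytic continuation of $\outmapNOadap{\mathcal{N}}{\sigma}$ then has a singularity set whose ``outermost'' components are the complex affine hyperplanes determined by $(W^1_j,\theta^1_j)$; the no-clones condition guarantees that distinct first-layer neurons produce distinct hyperplanes, so $(W^1,\theta^1)$ is recoverable up to the permutation freedom inherent in $\simeq$. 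Quotienting out the first layer and iterating (tracking orders, residues, and loci of the nested singularities) recovers subsequent layers by induction on depth, and a pigeonhole-style argument forces the common depth $L$ and layout to match.

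For the approximation stage, I would use $\rho'\in BV(\R)$ to write $\rho(x)=c+\int_{-\infty}^x d\mu(t)$ for a signed Radon measure $\mu$. Approximating $\mu$ in total variation by a finite atomic measure gives a uniform approximation $\rho_N(x)=c+\sum_{k=1}^N \alpha_k H(x-t_k)$ by jump sums. Replacing each Heaviside by a sharply scaled copy $\phi(\lambda_k(x-t_k))$ of a fixed template meromorphic sigmoid $\phi$ (with the ``good'' pole configuration described above) produces $\sigma(x)=c+\sum_{k=1}^N \alpha_k\,\phi(\lambda_k(x-t_k))$, which is a finite sum of meromorphic terms, hence meromorphic, real-valued on $\R$, and uniformly $\epsilon$-close to $\rho$ once the $\lambda_k$ are large enough and $N$ is large enough. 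A small generic perturbation of the parameters $\alpha_k,\lambda_k,t_k$ then restores membership in $\mathcal{S}$ without exiting an $\epsilon$-ball around $\rho$.

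The main obstacle is carrying out the identifiability argument in stage one under the lone hypothesis of no-clones. Because connectivity is not assumed and biases may be zero, neurons in different layers can in principle produce coincident singularities of $\outmapNOadap{\mathcal{N}}{\sigma}$, obstructing the recursive peeling-off procedure. Ruling this out is the crux of the construction and is where the generic placement of the poles of $\sigma$ is exploited: one must prove that the pole-tracking through arbitrary compositions of $\sigma$ with affine maps produces singular loci of pairwise distinct geometric character whenever the underlying networks are non-isomorphic. A secondary difficulty, of a more bookkeeping flavour, is to verify that the genericity-restoring perturbation at the end of the approximation stage can simultaneously preserve the real-valuedness condition $\sigma(\R)\subset\R$ and respect the $\epsilon$-bound.
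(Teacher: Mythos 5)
Your approximation stage is close in spirit to what the paper actually does (Proposition \ref{ApproxProp}): write $\rho$ as an integral of Heaviside jumps against $\rho'$ and replace each jump by a scaled $\tanh$-type sigmoid. But note that the paper needs an \emph{infinite} sum $\sigma=C+\sum_{s\in S}c_s\tanh(\alpha(\cdot-s))$ over a discrete set $S$ engineered to be \emph{self-avoiding} (via $S=\{\beta(k+\pi^{-b(k)})\}$ and transcendence of $\pi$), and the "generic perturbation restoring membership in $\mathcal{S}$" that you defer to the end is precisely where this nontrivial work sits; it is not a routine bookkeeping step, because the property required of $\sigma$ is a global combinatorial property of its entire pole set, not a finite list of algebraic non-degeneracies.

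The genuine gap is in your identifiability stage, and you have in effect named it yourself: under the no-clones hypothesis alone, the Fefferman-style recursive peeling of layers from the singularity set is exactly what \emph{cannot} be made to work, and your proposal contains no mechanism to fix it. Two concrete failure modes. First, with partial connectivity, zero biases, and rationally related weights all permitted, first-layer neurons such as $(w,\theta)$ and $(2w,\theta)$ produce nested or coincident singular loci for any $\sigma$ whose poles have arithmetic structure, and contributions from different output paths can cancel; ruling out such cancellations for arbitrary non-isomorphic networks is the entire content of the theorem, not a lemma one can wave at. Second, your proposed $\sigma$ with a single conjugate pair of poles is structurally too poor to support the argument that does work: the paper's proof is \emph{not} a direct read-off of the network from singularities but a proof by contradiction of a Linear Independence Theorem (Theorem \ref{LITheoremIntro}) for single-output networks, using amalgamation, an input-splitting step that relies on the $i$-periodicity of $\sigma$ together with Ratner's orbit closure theorem (Lemmas \ref{TorusWindingLemma} and \ref{MainTorusLemma}) to handle irrational weight ratios, and the self-avoiding property of the infinite pole set $S$ to handle the all-rational case; the multi-output Uniqueness Theorem is then deduced from linear independence by an amalgam argument (a reduction also missing from your proposal, and not entirely trivial since linear independence genuinely fails for $D_{out}>1$). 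A $\sigma$ with finitely many poles is neither $i$-periodic nor capable of having a self-avoiding singular set in the required sense, so neither branch of the paper's input-splitting argument is available, and no substitute is offered.
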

\noindent We note that, having fixed the input and output dimensions $D_{in}$ and $D_{out}$, the depths and the layouts of the networks in $\mathscr{N}^{D_{in},D_{out}}_{nc}$ are completely arbitrary.
Examples of nonlinearities $\rho(x)$ covered by Theorem \ref{UniquenessTheoremIntro} include many sigmoidal functions such as the aforementioned clipped ReLU, the logistic function $\frac{1}{1+e^{-x}}$, the hyperbolic tangent $\tanh(x)$ 
, the inverse tangent $\arctan(x)$, the softsign function $\frac{x}{1+|x|}$, the inverse square root unit $\frac{x}{\sqrt{1+a x^2}}$, the clipped identity $\frac{x}{\max\{1,|x|/a\}}$, and the soft clipping function $\frac{1}{a}\log\frac{1+e^{a x}}{1+e^{a(x-1)}}$,  where $a>0$ is fixed in the last two cases. Unbounded nonlinearities such as the ReLU are not comprised. The nonlinearities $\sigma$ for which we have identifiability, unfortunately, need to be constructed, and, at the present time, we do not have an identification result for arbitrary given $\sigma$.  
Furthermore, we remark that the statement of Theorem \ref{UniquenessTheoremIntro} is ``not continuous'' in the approximation error $\epsilon$. Indeed, while the clipped ReLU function satisfies the conditions of Theorem \ref{UniquenessTheoremIntro}, as shown in the example above,  
there exist non-isomorphic networks $\mathcal{N}_0$ and $\widetilde{\mathcal{N}}_0$ satisfying the no-clones condition and $\outmapNOadap{\mathcal{N}_0}{\rho}(x)=0=\outmapNOadap{\widetilde{\mathcal{N}}_0}{\rho}(x)$, for all $x\in\R^{D_0}$, where $\rho$ is the clipped ReLU function.  
We will see that Theorem \ref{UniquenessTheoremIntro} is, in fact, a consequence of the following result, which states that the maps  
realized by pairwise non-isomorphic networks  
with $D_L=1$, under a nonlinearity $\sigma$ according to Theorem \ref{UniquenessTheoremIntro}, are linearly independent functions $\R^{D_{0}}\to \R$.
\begin{theorem}[Linear Independence Theorem]\label{LITheoremIntro}
Let $D_{in}$ be an arbitrary positive integer, let $\rho$ be a piecewise $C^1$ function with ${\rho'\in BV(\R)}$, and let $\epsilon>0$. Then there exists a meromorphic function $\sigma:\dom\to \C$, $\dom\supset\R$, $\sigma(\R)\subset\R$ such that $\|\rho-\sigma\|_{L^\infty(\R)}<\epsilon$ with the following property: Suppose that $\mathcal{N}_j$, $j=1,2,\dots, n$, are pairwise non-isomorphic (in the sense of $\simeq$) neural networks in $\mathscr{N}^{D_{in},1}_{nc}$. 
Then, $\{\outmapNOadap{\mathcal{N}_j}{\sigma} \}_{j\hspace{0.5mm}=\hspace{0.5mm}1}^n\cup\{\bm{1}\}$ is a linearly independent set of functions $\R^{D_0}\to \R$, where $\bm{1}$ denotes the constant function taking on the value 1.
\end{theorem}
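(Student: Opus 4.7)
The plan is to construct $\sigma$ as a meromorphic $\epsilon$-approximation of $\rho$ whose poles in $\C\setminus\R$ are placed at a sufficiently ``Diophantinely generic'' set of points, and then, after a generic reduction to one complex variable, to read off the architecture of each $\mathcal{N}_j$ from the singularity pattern of the analytic continuation of $\langle\mathcal{N}_j\rangle^\sigma$. Since $\rho$ is piecewise $C^1$ with $\rho'\in BV(\R)$, I would first produce $\sigma$ by mollification with a meromorphic kernel — for instance, a partial-fraction series $K(x)=\sum_k \alpha_k/(x-\pi_k)$ with $\pi_k\in\C\setminus\R$, or a rescaled combination of an already-meromorphic sigmoid such as $\tanh$ or $\arctan$. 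The scaling parameter is tuned so that $\|\rho-\sigma\|_{L^\infty(\R)}<\epsilon$, while the pole locations $\{\pi_k\}$ are arranged so that their imaginary parts, together with any finite reference set, are $\Q$-linearly independent. This Diophantine property is what ultimately forbids accidental coincidences between singular points produced by different networks.

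Next I would set up a proof by contradiction: assume pairwise non-isomorphic $\mathcal{N}_1,\dots,\mathcal{N}_n\in\mathscr{N}^{D_{in},1}_{nc}$ and reals $c_0,\dots,c_n$ not all zero, with $c_0+\sum_j c_j\,\langle\mathcal{N}_j\rangle^\sigma(x)\equiv 0$ on $\R^{D_{in}}$. For a generic choice of $v,w\in\R^{D_{in}}$, the univariate restrictions $f_j(t) = \langle\mathcal{N}_j\rangle^\sigma(w+tv)$ extend to meromorphic functions of $t\in\C$, and the identity $c_0+\sum_j c_j\,f_j(t)=0$ continues analytically to the complement of the union of their polar sets. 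The singular set of each $f_j$ decomposes hierarchically by the layer at which a pole of $\sigma$ is encountered: the outermost ``first-layer'' poles of $f_j$ are the isolated points $t=(\pi-\theta^{1,j}_k-W^{1,j}_k\cdot w)/(W^{1,j}_k\cdot v)$, indexed by $(k,\pi)$ with $\pi$ a pole of $\sigma$, while deeper-layer poles cluster around these points with a specific higher-order singular profile. The Diophantine choice in the construction of $\sigma$ guarantees that two first-layer singular points $t^{(i)}_{k,\pi}$ and $t^{(j)}_{k',\pi'}$ coincide \emph{only} when $(W^{1,i}_k,\theta^{1,i}_k)=(W^{1,j}_{k'},\theta^{1,j}_{k'})$, and the no-clones condition rules out such coincidences within a single $\mathcal{N}_j$. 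An induction on the maximum depth appearing among the $\mathcal{N}_j$ — matching at each stage the singular points of highest order across the $f_j$, identifying common outer layers up to permutation, and peeling them off — combined with the pairwise non-isomorphism of the $\mathcal{N}_j$, forces every $c_j$ to vanish, contradicting the assumed non-triviality.

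The main obstacle is the inductive matching of singularities when the depths $L_j$ of the networks differ: a deeper network contributes singular points at every layer, and one must rule out the possibility that these deeper singularities of one $f_j$ exactly mimic the outermost singularities of another $f_{j'}$. This is precisely what the Diophantine placement of the poles of $\sigma$ is engineered to prevent, but arranging a single $\sigma$ whose poles achieve the required separation for \emph{every} finite family of pairwise non-isomorphic networks — of unbounded depth and layout — while simultaneously meeting the $L^\infty$-approximation constraint $\|\rho-\sigma\|_{L^\infty(\R)}<\epsilon$ is the most delicate technical step of the proof, and presumably drives much of the concrete construction of $\sigma$ in the rest of the paper.
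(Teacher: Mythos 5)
Your proposal is essentially Fefferman's strategy (read the network off the geometry of the singularities of the analytic continuation), whereas the paper deliberately abandons that route: its proof runs by contradiction through amalgamation of the $\mathcal{N}_j$ into one single-input multi-output network, a minimal-counterexample argument, ``input splitting'' via a torus-winding/Ratner-type lemma exploiting the $i$-periodicity of $\sigma=C+\sum_s c_s\tanh(\alpha(\cdot-s))$, and ``input anchoring'' to strictly shrink the counterexample. The difference is not cosmetic: the paper's construction makes the poles of $\sigma$ lie on the lattice-like set $\bigcup_n\bigl(S+(n+\tfrac12)i\bigr)$, i.e.\ with imaginary parts in $\tfrac12+\Z$ after rescaling --- the exact opposite of your requirement that the imaginary parts be $\Q$-linearly independent --- and the relevant genericity of $S$ is the combinatorial ``self-avoiding'' property (about translates and odd-integer dilates of $S\subset\R$), not a Diophantine condition on imaginary parts.

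The genuine gap is the step you yourself flag and then defer: under only the no-clones condition, the singularity-matching induction does not go through as described. First, deeper-layer singularities of $f_j(t)=\sigma(W^2\sigma(W^1(w+tv)+\theta^1)+\theta^2)$ are not ``clustered with a specific higher-order profile'' around the first-layer points; they \emph{accumulate} at them (near a first-layer pole the inner map takes all large values, so preimages of the outer poles pile up), so the first-layer points are not isolated singularities and one cannot simply compare principal parts or orders there. Second, since the hypothesis is a linear dependency $c_0+\sum_j c_jf_j\equiv 0$ rather than equality of two maps, you must exclude cancellation of singular contributions across different networks and different layers; Fefferman needed precisely his Assumptions (i)--(iii) (nonzero distinct biases, full connectivity, and the weight-ratio condition excluding rationals with denominator up to $100D_\ell^2$) to control such coincidences, and those assumptions are exactly what this theorem dispenses with. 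No choice of ``Diophantinely generic'' pole locations is shown (or known) to rule out, say, two first-layer nodes whose weight vectors are rational multiples of one another, which is permitted by the no-clones condition; this rational-ratio case is the ``hard case'' that forces the paper to introduce the self-avoiding property and the generalized (non-layered) networks in the first place. As it stands, your argument reduces the theorem to an unproved claim that is at least as hard as the theorem itself.
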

\noindent  
\begin{remark}
The function $\bm{1}$ is included in the linearly independent set both for the sake of greater generality of the statement, and to facilitate the proof of Theorem \ref{LITheoremIntro}.
\end{remark}
Unfortunately, Theorem \ref{LITheoremIntro} does not generalize to multiple outputs $D_{out}>1$, as shown by the following example: Fix an arbitrary network $\mathcal{N}$ according to Definition \ref{VanillaLayeredArch} such that $L\geq 2$, $D_L=4$, $\theta_L=\bm{0}$,  
and $\mathcal{N}$ satisfies the no-clones condition. Define $U^m\in \R^{2\times D_{L-1}}$, $m\in\{1,2,3,4\}$, as the submatrices of $W^L$ consisting of the rows $1$ and $3$, $1$ and $4$, $2$ and $4$, and $2$ and $3$, respectively. Furthermore, define the networks
\begin{equation*}
\mathcal{N}_m\coleqq(D_0,D_1,\dots,D_{L-1}, 2;W^1,\theta^1,W^2,\theta^2,\dots,W^{L-1},\theta^{L-1},U^m,\bm{0}),
\end{equation*}
for $m\in\{1,2,3,4\}$. As $\mathcal{N}$ satisfies the no-clones condition, the networks $\mathcal{N}_m$, $m\in\{1,2,3,4\}$, also satisfy the no-clones condition, and are pairwise non-isomorphic.  
 
Now, let $\rho$ be an arbitrary nonlinearity, and write $\outmapNOadap{\mathcal{N}}{\rho}=(f_1,f_2,f_3,f_4)$, where $f_m:\R^{D_0}\to \R$, $m\in\{1,2,3,4\}$. Then
\begin{equation*}
\outmapNOadap{\mathcal{N}_1}{\rho}=(f_1,f_3),\quad \outmapNOadap{\mathcal{N}_2}{\rho}=(f_1,f_4),\quad \outmapNOadap{\mathcal{N}_3}{\rho}=(f_2,f_4),\text{ and } \; \outmapNOadap{\mathcal{N}_4}{\rho}=(f_2,f_3),
\end{equation*}
and so
\begin{equation*}
\outmapNOadap{\mathcal{N}_1}{\rho}-\outmapNOadap{\mathcal{N}_2}{\rho}+\outmapNOadap{\mathcal{N}_3}{\rho}-\outmapNOadap{\mathcal{N}_4}{\rho}=\begin{pmatrix}0+ f_1 -f_1 +f_2-f_2\\0+ f_3-f_4+f_4-f_3\end{pmatrix}=\bm{0}.
\end{equation*}
The set $\{\outmapNOadap{\mathcal{N}_m}{\rho}\}_{m\hspace{0.5mm}=\hspace{0.5mm}1}^4$ is hence linearly dependent, showing that Theorem \ref{LITheoremIntro} cannot be generalized to multiple outputs by replacing $\mathscr{N}_{nc}^{D_{in},1}$ with $\mathscr{N}_{nc}^{D_{in},D_{out}}$.
We now provide a panorama of the proofs of Theorems \ref{UniquenessTheoremIntro} and \ref{LITheoremIntro}.
The proof of Theorem \ref{UniquenessTheoremIntro} is by way of contradiction with Theorem \ref{LITheoremIntro}. Specifically, assume that $D_{in}$, $D_{out}$, $\rho$, and $\epsilon>0$ are as in the statement of  Theorem \ref{UniquenessTheoremIntro}, and let $\sigma$ be a nonlinearity satisfying the conclusion of Theorem \ref{LITheoremIntro} with these $D_{in}$, $\rho$, and $\epsilon$. 
For a network $\mathcal{N}\in  \mathscr{N}_{nc}^{D_{in},D_{out}}$, we write the map $\outmapNOadap{\mathcal{N}}{\sigma}=\left((\outmapNOadap{\mathcal{N}}{\sigma})_1,\dots,(\outmapNOadap{\mathcal{N}}{\sigma})_{D_{out}}\right)$ in terms of the coordinate functions $(\outmapNOadap{\mathcal{N}}{\sigma})_j:\R^{D_{in}}\to\R$, $j\in\{1,\dots,D_{out}\}$.  
Now, let $\mathcal{N}_1,\mathcal{N}_2\in \mathscr{N}_{nc}^{D_{in},D_{out}}$ be networks such that $\outmapNOadap{\mathcal{N}_1}{\sigma}(x)=\outmapNOadap{\mathcal{N}_2}{\sigma}(x)$, for all $x\in\R^{D_{in}}$, and suppose by way of contradiction that they are non-isomorphic.
We construct a network $\mathcal{M}$ containing both $\mathcal{N}_1$ and $\mathcal{N}_2$ as subnetworks (a precise definition of ``subnetwork'' is given in Section III, Definition \ref{def:GFNNsubnet}). It follows that $\mathcal{M}$ contains subnetworks $\mathcal{M}_{m,j}\in\mathscr{N}^{D_{in},1}_{nc}$ with maps satisfying $\outmapNOadap{\mathcal{M}_{m,j}}{\sigma}={(\outmapNOadap{\mathcal{N}_m}{\sigma})}_j$, for $m\in\{1,2\}$ and  $j\in\{1,\dots, D_{out}\}$. We then show that, as a consequence of  $\mathcal{N}_1$ and $\mathcal{N}_2$ being non-isomorphic, there exists a $j\in\{1,\dots, D_{out}\}$ such that $\mathcal{M}_{1,j}$ and $\mathcal{M}_{2,j}$ are non-isomorphic. But then
\begin{equation*}
0\cdot\bm{1}+\outmapNOadap{\mathcal{M}_{1,j}}{\sigma}-\outmapNOadap{\mathcal{M}_{2,j}}{\sigma}={(\outmapNOadap{\mathcal{N}_1}{\sigma})}_j-{(\outmapNOadap{\mathcal{N}_2}{\sigma})}_j=0,
\end{equation*}
which stands in contradiction to Theorem \ref{LITheoremIntro}. This completes the proof of Theorem  \ref{UniquenessTheoremIntro}.

  The proof of  Theorem \ref{LITheoremIntro} is significantly more involved, as it requires extensive ``fine tuning'' of the function $\sigma$. Let $\sigma:\dom\to \C$ be as in the statement of Theorem \ref{LITheoremIntro}.  
In addition to the properties stated in Theorem \ref{LITheoremIntro}, the function $\sigma$ we construct exhibits the following convenient structural properties:
\begin{enumerate}
\item The domain $\dom\subset \C$ of $\sigma$ is the complement of an (infinite) discrete set of poles,
\item $\sigma$ is $i$-periodic, i.e., $\sigma(z+i)=\sigma(z)$, for all $z\in\dom$, and
\item for any network $\mathcal{N}\in \mathscr{N}^{1,1}$, the natural domain $\dom_{\outmapNOadap{\mathcal{N}}{\sigma}}\subset\C$ of $\outmapNOadap{\mathcal{N}}{\sigma}$, viewed as a holomorphic function, is the complement of a closed countable subset of $\C$, and therefore a connected open set.
\end{enumerate}
These three properties are all satisfied by the function $\tanh(\pi\,\cdot)$, and are essentially the key insight leading to Fefferman's identifiability result in \cite{Fefferman1994}, which establishes that, under the genericity conditions stated in Assumptions \ref{ass:FeffGen}, a neural network can be read off from the asymptotic (as the imaginary part of the argument tends to infinity) locations of the singularities of the map it realizes under the $\tanh$ nonlinearity.
The properties  1) -- 3) will be key to our results as well, but instead of studying the set of singularities of the map in its own right,
 our proof of Theorem \ref{LITheoremIntro} will proceed by contradiction. The proof consists of three steps that we call \emph{amalgamation}, \emph{input splitting}, and \emph{input anchoring}, and involves the use of analytic continuation, graph-theoretic constructions, and Kronecker's theorem \cite{Kronecker1884}, the latter two of which are novel tools in this context and signify a significant departure from Fefferman's proof technique in \cite{Fefferman1994}.
We now briefly describe the proof of Theorem \ref{LITheoremIntro} according to the aforementioned program.
Suppose that $\mathcal{N}_1,\dots, \mathcal{N}_n$ are pairwise non-isomorphic neural networks satisfying the no-clones condition. For the sake of simplicity of this informal discussion, we assume that $L_1=L_2=\dots=L_n$, $D_0^1=D_0^2=\dots=D_0^n=1$, and $D_{L_1}^1=D_{L_2}^2=\dots=D_{L_n}^n=1$. By way of contradiction, we suppose that there exists a nontrivial linear combination such that $ \lambda_0\bm{1}(x)+\sum_{j=1}^n \lambda_j \mathcal{N}_j^\sigma(x)=0$, for all $x\in\R$. 

\emph{Amalgamation}: In Section III we construct a neural network $\mathcal{M}\in \mathscr{N}^{1,n}_{nc}$, called the amalgam of $\{\mathcal{N}_j\}_{j\hspace{0.5mm}=\hspace{0.5mm}1}^n$, containing each $\mathcal{N}_j$ as a subnetwork. In particular, we have ${(\outmapNOadap{\mathcal{M}}{\sigma})}_j=\outmapNOadap{\mathcal{N}_j}{\sigma}$, for all $j\in\{1,\dots,n\}$.  
The linear dependence of $\{\outmapNOadap{\mathcal{N}_j}{\sigma}\}_{j\hspace{0.5mm}=\hspace{0.5mm}1}^n\cup \{\bm{1}\}$ thus translates to 
\begin{equation}\label{ProofDigestLD}
 \lambda_0+\sum_{j=1}^n \lambda_j (\outmapNOadap{\mathcal{M}}{\sigma})_j(z)=0,
\end{equation}
for all $z\in\R$. By our construction of $\sigma$, the natural domains $\dom_{\outmapNOadap{\mathcal{N}_j}{\sigma}}=\dom_{(\outmapNOadap{\mathcal{M}}{\sigma})_j}$ are complements of closed countable sets, and hence, by analytic continuation, \eqref{ProofDigestLD} is valid for all $z\in\bigcap_{j=1}^n \dom_{\outmapNOadap{\mathcal{N}_j}{\sigma}}$.
Now define $\mathscr{M}$ to be the set of all neural networks in $\bigcup_{m=1}^n\mathscr{N}_{nc}^{1,m}$ 
with linear dependency as in \eqref{ProofDigestLD} between the output functions and the constant function. Note that $\mathscr{M}$ is nonempty, simply as $\mathcal{M}\in\mathscr{M}$.
We then fix a network $\mathcal{M}'\in \mathscr{M}$ of minimum size (the precise definition of size will be given in the proof of Theorem \ref{MainThm}). Write $(1,D^{\mathcal{M'}}_1,\dots,D^{\mathcal{M'}}_m)$ for the layout of $\mathcal{M'}$, and let $(\omega_1,\dots,\omega_{D^{\mathcal{M'}}_1})$ be the weights of the first layer of  $\mathcal{M'}$ (i.e., the entries of $W^1$ according to Definition \ref{VanillaLayeredArch}). At this point the proof splits into two cases, depending on whether there exist $j,j'\in\{1,\dots,{D^{\mathcal{M'}}_1}\}$, $j\neq j'$, such that $\omega_j/\omega_{j'}$ is irrational.

\emph{Input splitting, the easy case}. Provided there do exist such $j$ and $j'$, we use Kronecker's theorem \cite{Kronecker1884} and the properties (i) -- (iii) of $\sigma$ to construct a network $\mathcal{M}''\in \mathscr{M}$ with layout  $(k,D^{\mathcal{M'}}_1,\dots,D^{\mathcal{M'}}_m)$, for some $k\in \{2,\dots,D^{\mathcal{M'}}_1\}$, and first-layer weights $\widetilde{W}^1\in \R^{D^{\mathcal{M'}}_1\times k}$ such that the first $k$ rows of $\widetilde{W}^1$ form a $k\times k$ identity matrix.

\emph{Input anchoring}.  
We then construct a third network $\mathcal{N}\in\mathscr{M}$, obtained by fixing $k-1$ of the $k$ inputs of $\mathcal{M}''$ to specific real numbers, and ``cutting out'' all the parts of the network whose contributions to the output map have become constant in the process. The resulting network $\mathcal{N}$ will be a network in $\mathscr{M}$ of size smaller than $\mathcal{M}'$, which contradicts the minimality of $\mathcal{M}'$, and thereby completes the proof.

\emph{Input splitting, the hard case}. If, however, all the ratios $\omega_j/\omega_{j'}$, $j\neq j'$ are rational, the input splitting construction described above cannot be carried out. This problem will be remedied by further refining our initial construction of $\sigma$. Specifically, we will ensure that the real parts of the poles of $\sigma$ form a subset of $\R$ satisfying what we call the \emph{self-avoiding property}, to be introduced in Section V. This will enable an alternative construction of a network $\mathcal{M}''$ with at least two inputs. The resulting $\mathcal{M}''$ will, however, not be a neural network in the sense of Definition \ref{VanillaLayeredArch}, but rather a generalized network in the sense of Definition \ref{def:GFNN}, to be introduced in Section III. 

\emph{Input anchoring}. Finally, we apply an input anchoring procedure to $\mathcal{M}''$ similar to the one described above. Even though now $\mathcal{M}''$ is not a network in the sense of Definition \ref{VanillaLayeredArch}, the input anchoring procedure will result in a network $\mathcal{N}\in \mathscr{M}$ which is a network in the sense of Definition \ref{VanillaLayeredArch}, and is of smaller size than $\mathcal{M}'$, again completing the proof by contradiction.
 
We conclude this section by laying out the organization of the remainder of the paper. In Section III we develop a graph-theoretic framework needed to define amalgams of neural networks and several other technical concepts. In Section IV we state results from complex analysis and Kronecker's theorem needed in arguments involving analytic continuation and input splitting, respectively. The proofs of these results are relegated to the Appendix. In Section V we discuss the fine structural properties of the function $\sigma$ constructed in the proof of Theorem \ref{LITheoremIntro}. Finally, Section VI contains the proofs of our two main results.  
\section{Directed acyclic graphs, general neural networks, and\\ neural network amalgams}
As already mentioned, in the proof of Theorem \ref{LITheoremIntro} we will work with a form of neural networks that does not fit in with Definitions  \ref{VanillaLayeredArch} and \ref{VanillaLayeredRealiz}.  In order to accommodate this notion of neural networks, and to lighten the manipulations needed to formalize the aforementioned techniques of amalgamation and input anchoring, we introduce a graph-theoretic framework.
   
We start by introducing the concept of a directed acyclic graph (DAG), commonly encountered in the graph theory literature \cite{Bondy2008}.
\begin{definition}[Directed acyclic graph]\hfil
\begin{itemize}[--]
\item A directed graph is an ordered pair $G=(V,E)$ where $V$ is a finite set of nodes, and $E\subset V\times V$ is a set of directed edges.
 
\item A directed cycle of a directed graph $G$ is a set $\{v_1,\dots,v_k\}\subset V$ such that, for every $j\in\{1,\dots,k\}$,  $(v_j,v_{j+1})\in E$, where we set $v_{k+1}\coleqq v_1$.
\item A directed graph $G$ is said to be a directed acyclic graph (DAG) if it has no directed cycles.
\end{itemize}
\end{definition}
\noindent We interpret an edge $(v,\widetilde{v})$ as an arrow connecting the nodes $v$ and $\widetilde{v}$ and pointing at $\widetilde{v}$. 
\begin{definition}[Parent set, input nodes, and node level]\label{def:GFNNLevel}
Let $G=(V,E)$ be a DAG. 
\begin{itemize}[--]
\item We define the parent set of a node by $\pre (v)=\{\widetilde{v}:(\widetilde{v},v)\in E\}$.
\item We say that $v\in V$ is an input node if $\pre(v)=\varnothing$, and we write $\In(G)$ for the set of input nodes.  
  
\item We define the level $\lvl(v)$ of a node $v\in V$ recursively as follows. If $\pre (v)=\varnothing$, we set $\lvl(v)=0$. If $\pre(v)=\{v_1,v_2,\dots,v_k\}$ and $\lvl(v_1),\lvl(v_2),\dots,\lvl(v_k)$ are defined, we set $\lvl(v)=\max\{\lvl(v_1),\lvl(v_2),\dots,\lvl(v_k)\}+1$.
\end{itemize}
\end{definition}
\noindent Since the graph $G$ in Definition \ref{def:GFNNLevel}  is assumed to be acyclic, the level is well-defined for all nodes of $G$. We are now ready to introduce our generalized definition of a neural network.
\begin{definition}\label{def:GFNN}
A general feed-forward neural network (GFNN) is an ordered sextuple $\mathcal{N}=(V,E,V_{in},\allowbreak V_{out},\Omega,\Theta)$, where
\begin{itemize}[--]
\item $G=(V,E)$ is a  
DAG, called the architecture of $\mathcal{N}$,
\item $V_{in}=\In(G)$ is the set of inputs of $\mathcal{N}$,
\item $V_{out}\subset V\setminus V_{in}$ is the set of outputs of $\mathcal{N}$,
\item $\Omega=\{\omega_{\widetilde{v}v}\in \R\setminus\{0\} : (v,\widetilde{v})\in E\}$ is the set of weights of $\mathcal{N}$, and
\item $\Theta=\{\theta_{v}\in \R: v\in V\setminus V_{in}\}$ is the set of biases of $\mathcal{N}$.
\end{itemize}
The depth of a GFNN is defined as $L(\mathcal{N})=\max\{\lvl(v):v\in V\}$.
\end{definition}
\noindent When translating from Definition \ref{VanillaLayeredArch} to Definition \ref{def:GFNN}, we will interpret a zero weight $W_{jk}^\ell=0$ simply as the absence of a directed edge between the nodes concerned, hence we do not allow the edges of a GFNN to have zero weight. If $V^1$ and $V^2$ are the sets of nodes of GFNNs $\mathcal{N}_1$ and $\mathcal{N}_2$, respectively, and $v\in V^1\cap V^2$, we will say that $\mathcal{N}_1$ and $\mathcal{N}_2$ share the node $v$. When dealing with several networks sharing a node $v$, we will write $\pre_{\mathcal{N}}(v)$ for the parent set of $v$ in the architecture $(V,E)$ of $\mathcal{N}$, to avoid ambiguity.
\noindent Note that the set of outputs of a GFNN can be an arbitrary subset of the non-input nodes. In particular, $V_{out}$ can include nodes $w$ with $\lvl(w)<L(\mathcal{N})$.  
Related to the concept of the parent set of a node is the concept of a subnetwork introduced next.
\begin{definition}[Subnetwork and ancestor subnetwork]\label{def:GFNNsubnet}
Let $\mathcal{N}=(V,E,V_{in},V_{out},\Omega,\Theta)$ be a GFNN. A \emph{subnetwork of $\mathcal{N}$} is a GFNN
$\mathcal{N}'=(V',E',V_{in}',V_{out}',\Omega',\Theta')$ such that there exists a set $S\subset V$ so that
\begin{enumerate}[(i)]
\item $V'=\{v\in V: v\in \pre^r(u)\text{ for some }r\geq 0\}$, where, for a set $W\subset V$, we define $\pre^0(W)=W$ and $\pre^r(W)=\bigcup_{s\in W}\pre^{r-1}(\pre(s))$, for $r\geq 1$.
\item $E'=\{(v,\widetilde{v})\in E: v,\widetilde{v}\in V'\}$,
\item $V_{in}'=V_{in}\cap V'$,
\item $\Omega'=\{\omega_{\widetilde{v}v}:(v,\widetilde{v})\in E'\}$, and
\item $\Theta'=\{\theta_{v}:v\in V'\}$.
\end{enumerate}
If additionally $V_{out}'=S$, then $\mathcal{N}'$ is uniquely specified by $S$. In this case we say that $\mathcal{N}'$ is the \emph{ancestor subnetwork of $S$ in $\mathcal{N}$}, and write $\mathcal{N}(S)$ for this network.

\end{definition}
\begin{figure}[h!]\centering
\includegraphics[height=59mm,angle=0]{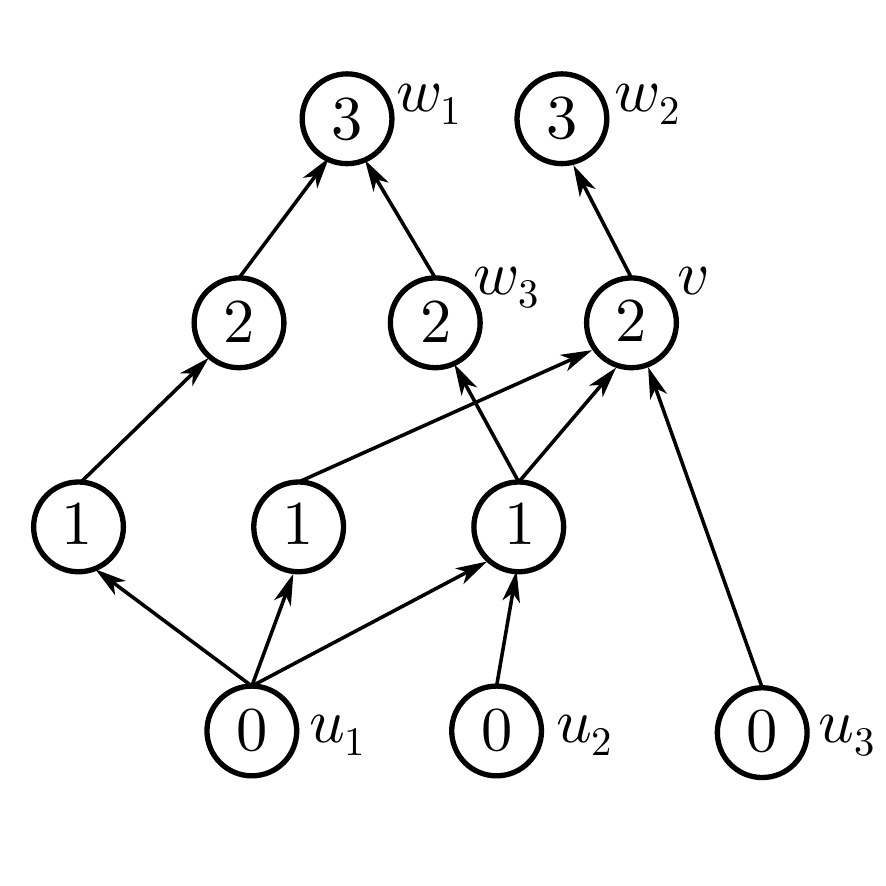}
\caption{A GFNN of depth 3 with input nodes $\{u_1,u_2,u_3\}$ and output nodes $\{w_1,w_2,w_3\}$. The node levels are indicated by the numbers inside the circles. Note that the output node $w_3$ is not a ``final node'', i.e., it has outgoing edges. As there is an edge $(u_3,v)$ connecting nodes of non-consecutive levels, the network is not layered. \label{fig:1stGFNN}}
\end{figure}
\begin{definition}
A layered feed-forward neural network (LFNN) is a GFNN satisfying $\lvl(\widetilde{v})=\lvl(v)+1$, for all $(v,\widetilde{v})\in E$.
\end{definition}
\noindent For an example of a GFNN that is not layered, see Figure \ref{fig:1stGFNN}.
We notice that LFNNs correspond to neural networks as specified by Definition \ref{VanillaLayeredArch}, with the nodes of level $\ell$ corresponding to the $\ell$-th network layer. Specifically, if $\mathcal{N}=(V,E,V_{in},V_{out},\Omega,\Theta)$ is a LFNN, we can label the nodes $\{v\in V:\lvl(v)=\ell\}$ by $v_{j}^\ell$, $j=1,\dots, D_\ell$, and let $\theta_j^\ell=\theta_{v_{j}^\ell}$, $W^\ell_{jk}=\omega_{v_{j}^\ell v_k^{\ell-1}}$ when $(k,j)\in E$ and $W^{\ell}_{jk}=0$ else. Apropos, this correspondence is the reason for the indices of the weight $\omega_{\widetilde v v}$ associated with the edge $(v,\tilde{v})$ of a GFNN appearing in ``reverse order''.
The following definition generalizes Definition \ref{VanillaLayeredRealiz} to GFNNs.
\begin{definition}[Output maps of nodes and networks]\label{def:GFNNrealiz}
Let $\mathcal{N}=(V,E,V_{in},V_{out},\Omega,\Theta)$ be a GFNN, and let $\rho:\R\to \R$ be a nonlinearity.  
The map realized by a node $v\in V$ under $\rho$ is the function $\outmap{v}{\rho}:\R^{V_{in}}\to \R$ defined recursively as follows:
\begin{itemize}[--]
\item If $v\in V_{in}$, set $\outmap{v}{\rho}\!(\bm{t})=t_v$, for all $\bm{t}=(t_{u})_{u\in V_{in}}\in \R^{V_{in}}$.
\item Otherwise set $\outmap{v}{\rho}\!(\bm{t})=\rho \left(\sum_{u\in\pre (v)} \omega_{vu}\cdot \outmap{u}{\rho}(\bm{t}) +\theta_v\right)$, for all $\bm{t}\in \R^{V_{in}}$.
\end{itemize}
The map realized by $\mathcal{N}$ under $\rho $ is the function $\outmap{\mathcal{N}}{\rho}:\R^{V_{in}}\to \R^{V_{out}}$ given by $\outmap{\mathcal{N}}{\rho}=(\outmap{w}{\rho})_{w\in V_{out}}$. When dealing with several networks  
 we will write $\outmapT{v}{\rho}{\mathcal{N}}$ for the map realized by $v$ in $\mathcal{N}$, to avoid ambiguity.
\end{definition}
\noindent We will treat nodes $v\in V$ only as ``handles'', and never as variables or functions. This is relevant when dealing with several networks with shared nodes, such as depicted in Figure \ref{fig:SharedNodes}. On the other hand, the output map $\outmap{v}{\rho}$ realized by $v$ is a function.
\begin{figure}[h!]\centering
\includegraphics[height=50mm,angle=0]{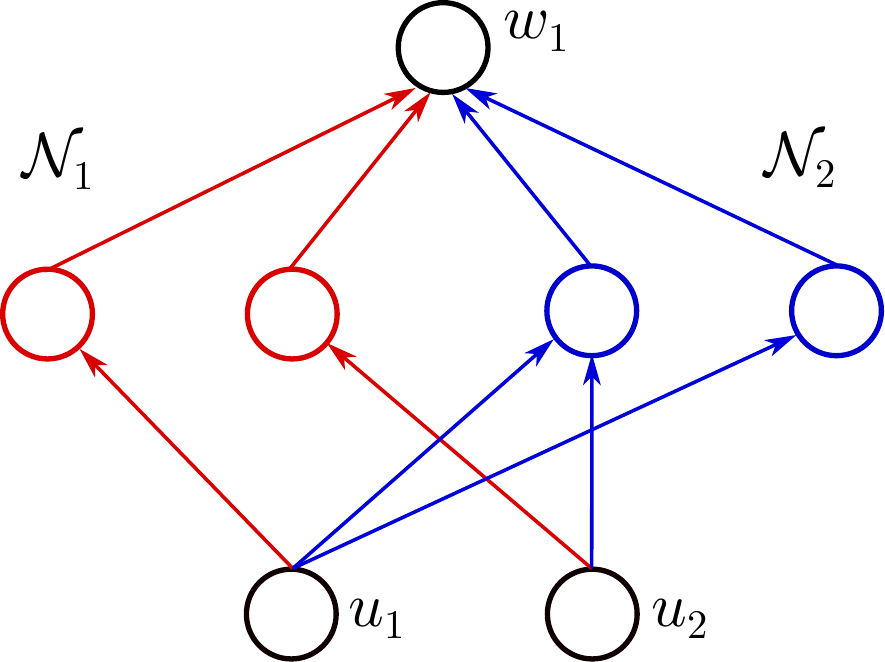}
\caption{The network $\mathcal{N}_1$ consists of the elements in red and black, and $\mathcal{N}_2$ consists of the elements in blue and black. Thus $\mathcal{N}_1$ and $\mathcal{N}_2$ share the nodes $u_1$, $u_2$, and $w_1$, even though the functions $\outmapTNOadap{w_1}{\rho}{\mathcal{N}_1}$ and $\outmapTNOadap{w_1}{\rho}{\mathcal{N}_2}$ may be ``completely unrelated''. \label{fig:SharedNodes}}
\end{figure}
In the special case when the nonlinearity is holomorphic on a neighborhood of $\R$, the output maps realized by the nodes of a network will extend to holomorphic functions on their natural domains, as given by the following definition.
\begin{definition}[Natural domain]\label{def:NatDom}
Let $\mathcal{N}=(V,E,V_{in},V_{out},\Omega,\Theta)$ be a GFNN, and let $\sigma:\dom_\sigma \to \C$ be a function holomorphic on an open domain $\dom_\sigma\supset \R$ and such that $\sigma(\R)\subset\R$.
For a node $v\in V$, we define the natural domain $\dom_{\outmap{v}{\sigma}}\subset \C^{V_{in}}$ and extend the definition of the function $\outmap{v}{\sigma}:\dom_{\outmap{v}{\sigma}}\to \C$ recursively as follows:
\begin{itemize}[--]
\item For $v\in V_{in}$, let $\dom_{\outmap{v}{\sigma}}=\C^{V_{in}}$, and set $\outmap{v}{ \sigma}\!(\bm{z})=z_v$, for all $\bm{z}=(z_{u})_{u\in V_{in}}\in \C^{V_{in}}$.
\item Otherwise, set $\dom_{\outmap{v}{\sigma}}=\left\{\bm{z}\in\bigcap_{u\in\pre(v)}\dom_{\outmap{u}{\sigma}}: \sum_{u\in\pre(v)} \omega_{vu}\outmap{u}{\sigma}\!(\bm{z})+\theta_v\in \dom_\sigma \right\}$, and let $\outmap{v}{ \sigma}\!(\bm{z}) \allowbreak = \sigma \left(\sum_{u\in\pre (v)} \omega_{vu}\cdot \outmap{u}{ \sigma}(\bm{z}) +\theta_v\right)$, for all $\bm{z}\in \dom_{\outmap{v}{\sigma}}$.
\end{itemize}
\end{definition}
\noindent It follows that the natural domain $\dom_{\outmap{u}{\sigma}}$ of a node $u$ is open, as it is the preimage of an open set with respect to a continuous map. Moreover, the output map $\outmap{u}{\sigma}$ realized by $u$ is holomorphic on $\dom_{\outmap{u}{\sigma}}$, as it is given explicitly by a concatenation of affine maps and the nonlinearity $\sigma$, which are themselves holomorphic functions. 
 
The following definition is a straightforward generalization of Definition \ref{FeffNoClones}.
\begin{definition}[Clone pairs and the no-clones condition]
Let $\mathcal{N}=(V,E,V_{in},V_{out},\Omega,\Theta)$ be a GFNN. We say that the nodes $v_1,v_2\in V$, $v_1\neq v_2$, are clones if $\pre(v_1)=\pre(v_2)$, $\theta_{v_1}=\theta_{v_2}$, and $\forall u\in \pre (v_1)$, $\omega_{v_1u}=\omega_{v_2u}$. We say that $\mathcal{N}$ satisfies the no-clones condition (or briefly, $\mathcal{N}$ is clones-free), if no two nodes $v_1,v_2\in V$, $v_1\neq v_2$, are clones. 
\end{definition}
 The following definition generalizes Definition \ref{def:TrueIso1st} to GFNNs, and introduces two new concepts, termed extensional isomorphism and faithful isomorphism, which will play an important technical role throughout the remainder of the paper. 
\begin{definition}[Extensional and faithful isomorphisms of GFFNs] \label{def:Isom} Let $\mathcal{N}^1=(V^1,E^1,V_{in},V_{out}^1,\allowbreak\Omega^1,\Theta^1)$ and $\mathcal{N}^2=(V^2,E^2,V_{in},V_{out}^2,\Omega^2,\Theta^2)$ be  
GFNNs with the same input nodes $V_{in}$.
\begin{itemize}[--]
\item
 We say that $\mathcal{N}^1$ and $\mathcal{N}^2$ are \emph{extensionally isomorphic}, and write $\mathcal{N}^1\eisom \mathcal{N}^2$,
 if there exists a bijection $\pi:V^1\to V^2$, called an \emph{extensional isomorphism}, such that the following holds:
\begin{enumerate}[(i)]
\item $\pi$ restricted to $V_{in}$ is the identity map,
\item $\pi(V_{out}^1)=V_{out}^2$,
\item for all $(v,\widetilde{v})\in E^1$, we have $\omega^2_{\pi(\widetilde{v})\pi(v)}=\omega^1_{\widetilde{v}v}$, and
\item for all $v\in V^1\setminus V_{in}$, we have $\theta^2_{\pi(v)}=\theta^1_v$.
\end{enumerate}
\item
We say that $\mathcal{N}^1$ and $\mathcal{N}^2$ are \emph{faithfully isomorphic}, and write $\mathcal{N}^1\fisom \mathcal{N}^2$, if they are extensionally isomorphic via $\pi:V^1\to V^2$ with the following additional property:
\begin{enumerate}[(i)]
 \setcounter{enumi}{4}
\item $V_{out}^1=V_{out}^2$, and $\pi$ restricted to $V_{out}^1$ is the identity map.  
\end{enumerate}
In this case we call $\pi$ a \emph{faithful isomorphism}.
\end{itemize}
\end{definition}
\begin{remark} The concept of faithful isomorphisms in Definition \ref{def:Isom} generalizes that of isomorphisms according to Definition \ref{def:TrueIso1st}. It is easily seen that extensional isomorphism is an equivalence relation on the set of all GFNNs with the same input nodes, whereas faithful isomorphism is an equivalence relation on the set of all GFNNs with the same input and output nodes. Furthermore, if $\mathcal{N}^1\eisom\mathcal{N}^2$ via $\pi:V^1\to V^2$, then we have $\outmapT{\pi(v)}{\rho}{\mathcal{N}^2}=\outmapT{v}{\rho}{\mathcal{N}^1}$, for all $v\in V^1$ and any nonlinearity $\rho$, and if additionally $\mathcal{N}^1\fisom\mathcal{N}^2$, then $\outmap{\mathcal{N}^1}{\rho}=\outmap{\mathcal{N}^2}{\rho}$.

\end{remark}
\noindent 
The following definition introduces the non-degeneracy property of a GFNN, which corresponds to the absence of spurious nodes, i.e., nodes that do not contribute to the map realized by the GFNN (with respect to an arbitrary nonlinearity). In the special case of LFNNs considered in the introduction, this property corresponds to the requirement that no matrix $W^\ell$ in Definition \ref{VanillaLayeredArch} has an identically zero row or column. 
 
\begin{definition}[Non-degeneracy]\label{def:NonDeg}
We say that a GFNN $\mathcal{N}=(V,E,V_{in},V_{out},\Omega,\Theta)$ is non-degenerate if

$V=V^{\mathcal{N}(V_{out})}$, where $V^{\mathcal{N}(V_{out})}$ is the set of nodes of the ancestor subnetwork of $V_{out}$ in $\mathcal{N}$. Networks that are not non-degenerate are referred to as degenerate.
\end{definition}
\noindent Informally, a network is non-degenerate if its every node ``leads up'' to at least one output. This notion is best understood with the help of examples as in Figure \ref{fig:DegenerateNets}.

We are now ready to introduce the concept of amalgams of LFNNs. 
\begin{figure}[h!]
\centering
\includegraphics[height=65mm,angle=0]{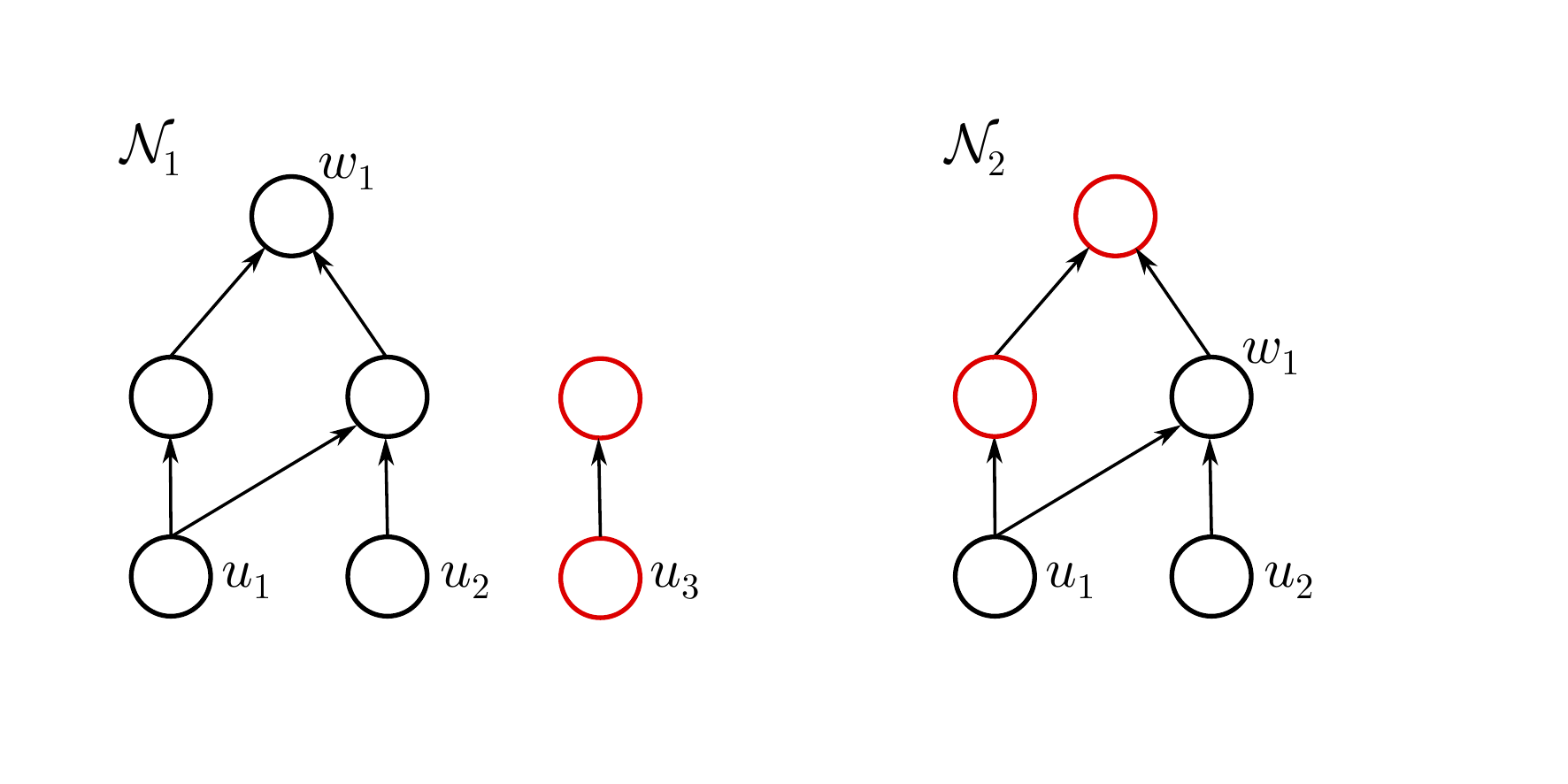}
\par
\vspace*{-1cm}
\caption{These GFNNs are degenerate owing to the presence of spurious nodes (in red) that do not affect the map of the output node $w_1$. Such networks obviously need to be excluded from consideration when discussing identifiability from the  map realized by the network, as its ``spurious parts'' cannot be inferred from the map it realizes. \label{fig:DegenerateNets}}
\end{figure}
\begin{definition}[Amalgam of two layered neural networks]\label{def:amaldef}
Let $\mathcal{N}_1=(V^1,E^1,V_{in},V_{out}^1,\Omega^1,\Theta^1)$ and $\mathcal{N}_2=(V^2,E^2,V_{in},V_{out}^2,\Omega^2,\Theta^2)$ be non-degenerate clones-free LFNNs with the same input set $V_{in}$.
\begin{itemize}[--]
\item Let $\mathcal{A}=(V^{\mathcal{A}},E^{\mathcal{A}},V_{in},V_{out}^{\mathcal{A}},\Omega^\mathcal{A},\Theta^\mathcal{A})$ be a non-degenerate LFNN with the following properties:
\begin{enumerate}[(i)]
 
\item There exist injective maps $\pi_1: V^1\to \pi_1(V^1)\subset V^\mathcal{A}$ and $\pi_2: V^2\to  \pi_2(V^2)\subset V^\mathcal{A}$ such that the networks $\mathcal{N}_1$ and $\mathcal{N}_2$ are extensionally isomorphic to the ancestor subnetworks $\mathcal{A}(\pi_1(V_{out}^1))$ and $\mathcal{A}(\pi_2(V_{out}^2))$ via $\pi_1$ and $\pi_2$, respectively.
\item $V^{\mathcal{A}}=\pi_1(V^1)\cup \pi_2(V^2)$ and $V_{out}^{\mathcal{A}}=\pi_1(V_{out}^1)\cup \pi_2(V_{out}^2)$.
 
\end{enumerate}
We then say that $\mathcal{A}$ is a proto-amalgam of $\mathcal{N}_1$ and $\mathcal{N}_2$.
\item 
If $\mathcal{A}$ is a clones-free proto-amalgam of $\mathcal{N}_1$ and $\mathcal{N}_2$, we say that $\mathcal{A}$ is an amalgam of $\mathcal{N}_1$ and $\mathcal{N}_2$.
\end{itemize}
\end{definition}
\begin{prop}\label{amalgamprop}
Let $\mathcal{N}_1=(V^1,E^1,V_{in},V_{out}^1,\Omega^1,\Theta^1)$ and $\mathcal{N}_2=(V^2,E^2,V_{in},V_{out}^2,\Omega^2,\Theta^2)$ be non-degenerate clones-free LFNNs with a shared input set $V_{in}$. Then there exists an amalgam $\mathcal{A}$ of $\mathcal{N}_1$ and $\mathcal{N}_2$. Moreover, the amalgam is unique up to extensional isomorphisms.

\end{prop}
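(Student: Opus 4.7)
The plan is to build an amalgam by a level-by-level construction that greedily identifies candidate clones across the two networks, and then to show that the no-clones condition forces any other amalgam to make the same identifications, yielding uniqueness up to extensional isomorphism.

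For existence, I would proceed inductively on the level $\ell$, maintaining injections $\pi_1^{(\ell)}:\{v\in V^1:\lvl(v)\leq \ell\}\to V^{\mathcal{A}}$ and $\pi_2^{(\ell)}:\{v\in V^2:\lvl(v)\leq \ell\}\to V^{\mathcal{A}}$. At $\ell=0$, both maps are the identity on the shared input set $V_{in}$. Assuming $\pi_1^{(\ell-1)},\pi_2^{(\ell-1)}$ have been defined consistently, I take the disjoint union of the level-$\ell$ nodes of $\mathcal{N}_1$ and $\mathcal{N}_2$ and quotient by the relation: $v^1\sim v^2$ iff (after translating parent sets through $\pi_1^{(\ell-1)}$ and $\pi_2^{(\ell-1)}$) their parent sets in $V^{\mathcal{A}}$ agree, their biases agree, and the weights on corresponding incoming edges agree. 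Because $\mathcal{N}_1$ is clones-free, no two distinct level-$\ell$ nodes of $V^1$ can be identified with each other through a chain via level-$\ell$ nodes of $V^2$, so the quotient restricted to $V^1$ (and likewise to $V^2$) remains injective. Add the resulting equivalence classes as new nodes of $V^\mathcal{A}$, with edges inherited from either network; the LFNN property is preserved since all incoming edges originate at level $\ell-1$. After processing the final level, take $V_{out}^{\mathcal{A}}=\pi_1(V_{out}^1)\cup\pi_2(V_{out}^2)$; non-degeneracy of $\mathcal{A}$ follows from non-degeneracy of $\mathcal{N}_1$ and $\mathcal{N}_2$, since every node of $V^{\mathcal{A}}$ lies in the image of some $\pi_i$ and hence has a descendant in $\pi_i(V_{out}^i)\subset V_{out}^{\mathcal{A}}$.

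The remaining task for existence is to verify that $\mathcal{A}$ is clones-free. Two nodes of $V^\mathcal{A}$ at the same level either both come from $V^1$, both come from $V^2$, or one from each. In the first two cases, clones would pull back to clones in $\mathcal{N}_1$ or $\mathcal{N}_2$, which is ruled out by hypothesis; in the mixed case, they would have been identified in the quotient at their level. Thus no clone pairs survive.

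For uniqueness, let $\mathcal{A}$ and $\mathcal{A}'$ be two amalgams with associated embeddings $(\pi_1,\pi_2)$ and $(\pi_1',\pi_2')$. I define a candidate bijection $\Pi:V^\mathcal{A}\to V^{\mathcal{A}'}$ by $\Pi(\pi_i(v))=\pi_i'(v)$ for $i\in\{1,2\}$; the only thing to check is well-definedness on the overlap $\pi_1(V^1)\cap \pi_2(V^2)$, i.e., that whenever $\pi_1(v^1)=\pi_2(v^2)$ in $\mathcal{A}$, one also has $\pi_1'(v^1)=\pi_2'(v^2)$ in $\mathcal{A}'$. This is the main obstacle, and I expect to handle it by induction on level: the equality $\pi_1(v^1)=\pi_2(v^2)$ in $\mathcal{A}$ forces (via the extensional isomorphisms $\pi_i$) that $v^1$ and $v^2$ have matching biases and, by the induction hypothesis, matching parent sets and incoming weights after transport through $\pi_1',\pi_2'$ in $\mathcal{A}'$; then $\pi_1'(v^1)$ and $\pi_2'(v^2)$ would be clones in $\mathcal{A}'$ unless equal, and the no-clones condition forces equality. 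Symmetry gives a two-sided inverse. The map $\Pi$ preserves inputs (both $\pi_i$ and $\pi_i'$ fix $V_{in}$), outputs ($V_{out}^{\mathcal{A}}$ and $V_{out}^{\mathcal{A}'}$ are the corresponding unions), weights, and biases, so $\Pi$ is an extensional isomorphism, completing the proof.
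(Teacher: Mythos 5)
Your proof is correct, and the uniqueness half coincides with the paper's argument: both define the candidate isomorphism piecewise via $\pi_i'\circ\pi_i^{-1}$ and establish well-definedness on the overlap $\pi_1(V^1)\cap\pi_2(V^2)$ by induction on level, invoking the no-clones property of the second amalgam to force the two candidate images to coincide. The existence half, however, takes a genuinely different route. The paper considers the set of all proto-amalgams (non-empty via the ``side by side'' network), picks one with the minimum number of nodes, and shows it must be clones-free by an extremal contradiction: a surviving clone pair would necessarily straddle $\pi_1(V^1)$ and $\pi_2(V^2)$, and deleting one member while grafting its outgoing edges onto the other produces a strictly smaller proto-amalgam. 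You instead build the amalgam directly, level by level, by quotienting the disjoint union of level-$\ell$ nodes by the ``same translated parents, same bias, same weights'' relation — essentially a formalization of the delete-and-graft process the paper only sketches in the caption of its amalgam figure. Your route is constructive and makes the resulting object explicit, at the price of more bookkeeping (checking that the quotient is injective on each $V^i$, which you correctly reduce to the clones-free hypothesis, and that the identifications do not introduce spurious edges into the ancestor subnetworks, which you leave implicit); the paper's extremal argument avoids all of this but is non-constructive. Both are sound, and your observation that clones in an LFNN necessarily share a level (since they share a parent set) is exactly what makes the level-by-level quotient exhaustive.
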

\begin{figure}[h!]\centering
\includegraphics[height=130mm,angle=0]{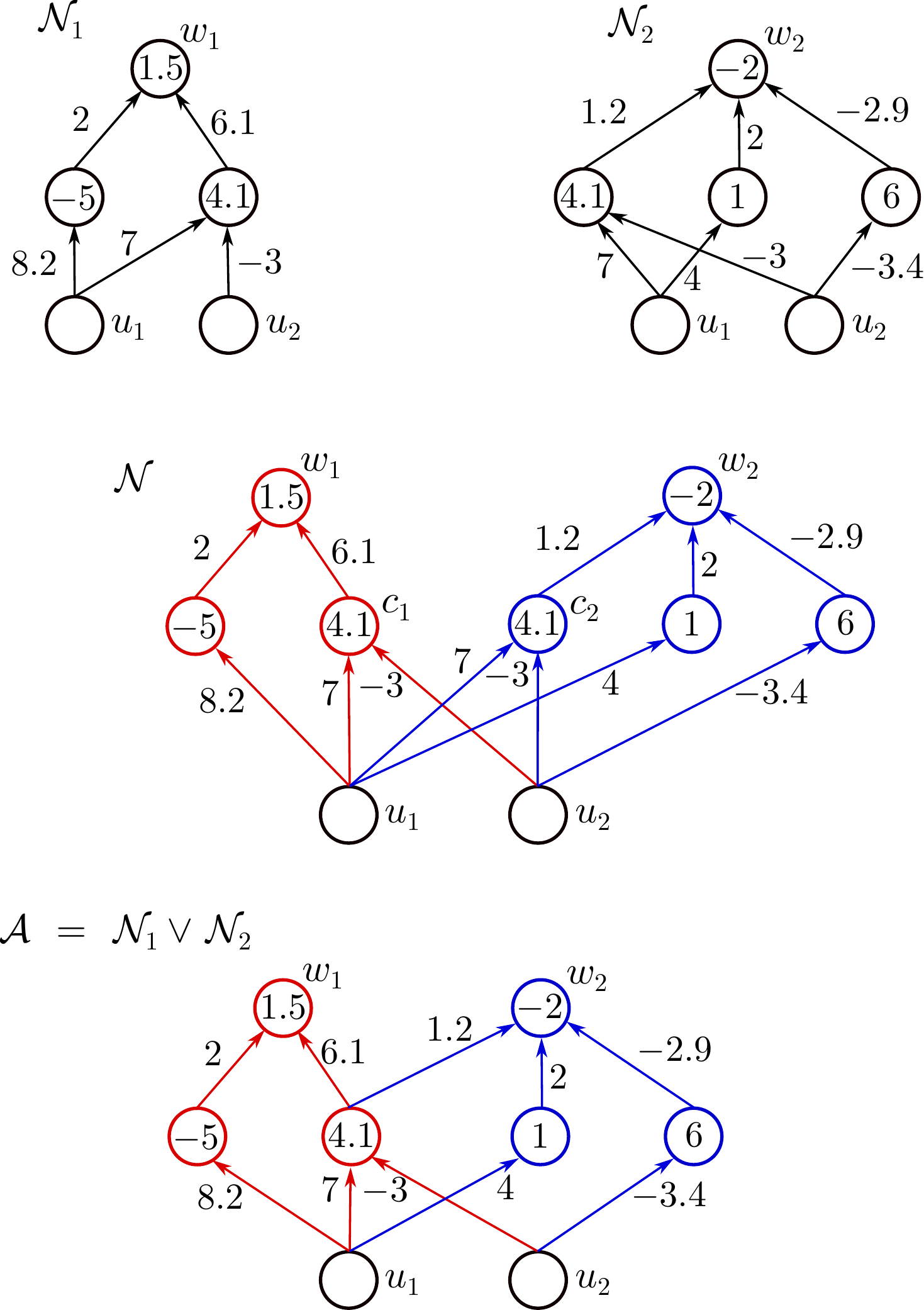}
\caption{\textit{Top:} LFNNs $\mathcal{N}_1$ and $\mathcal{N}_2$ to be amalgamated, with their weights next to the edges and the biases inside the nodes. \textit{Middle:} A proto-amalgam $\mathcal{N}$ of the two LFNNs, obtained by putting $\mathcal{N}_1$ and $\mathcal{N}_2$ ``side by side''. This network is not an amalgam of $\mathcal{N}_1$ and $\mathcal{N}_2$, as there is a clone pair $(c_1,c_2)$. \textit{Bottom:} The network $\mathcal{N}$ can be modified by deleting the node $c_2$ and ``grafting'' its outgoing edge to $c_1$. The resulting network $\mathcal{A}$ is now a clones-free proto-amalgam of $\mathcal{N}_1$ and $\mathcal{N}_2$, and is thus the amalgam $\mathcal{N}_1\vee \mathcal{N}_2$. For general LFNNs $\mathcal{N}_1$ and $\mathcal{N}_2$, this ``deleting and grafting'' process can be repeated until there are no clone pairs left.
 \label{fig:Amalgam}}
\end{figure}
\noindent As asserted in Proposition \ref{amalgamprop} (whose proof is deferred to the Appendix), an amalgam of two given non-degenerate clones-free LFNNs $\mathcal{N}_1$ and $\mathcal{N}_2$ always exists and is unique up to extensional isomorphisms. With slight abuse of notation, we will write $\mathcal{N}_1 \vee \mathcal{N}_2$ for an arbitrary element of the equivalence class (induced by $\eisom$) of all the amalgams of $\mathcal{N}_1$ and $\mathcal{N}_2$. A concrete example of an amalgam construction is provided in Figure \ref{fig:Amalgam}.
Having defined the amalgam of two non-degenerate clones-free LFNNs, we define the amalgam of any finite collection $\mathcal{N}_1,\dots,\mathcal{N}_n$ of non-degenerate clones-free LFNNs according to
\begin{equation*}
\bigvee_{k=1}^n\mathcal{N}_k=\mathcal{N}_1\vee \mathcal{N}_2 \vee\dots\vee \mathcal{N}_n\coleqq \left(\dots\left(\mathcal{N}_1\vee \mathcal{N}_2\right) \vee\dots\right)\vee \mathcal{N}_n.
\end{equation*}
By Definition \ref{def:amaldef}, $\bigvee_{k=1}^n\mathcal{N}_k$ is a non-degenerate clones-free LFNN. Moreover, there exist extensional isomorphisms $\pi_j:\mathcal{N}_j\to\pi_j(\mathcal{N}_j)\subset \bigvee_{k=1}^n\mathcal{N}_k$, for $j\in\{1,\dots,n\}$, and we have $\outmapT{\pi_j(v)}{\rho}{\bigvee_{k=1}^n\mathcal{N}_k}=\outmapT{v}{\rho}{\mathcal{N}_j}$, for $j
\in\{1,\dots, n\}$, $v\in V^{\mathcal{N}_j}$, and any nonlinearity $\rho$.  
 
We are now in a position to prove two lemmas that form the basis for the proof of Theorem \ref{LITheoremIntro}. The first lemma formalizes the idea of combining multiple pairwise non-isomorphic single-output networks with linearly dependent ouput maps into one multiple-output network with linear dependency among the maps of its ouput nodes.
\begin{lemma}\label{MashingTogetherLemma}
Let $\mathcal{N}_1$,\! $\mathcal{N}_2$,\,\dots,\! $\mathcal{N}_n$ be non-degenerate, clones-free LFNNs with a shared input set $V_{in}$ and the same single output node $\{v_{out}\}$. Furthermore, assume that no two networks $\mathcal{N}_{j_1},\mathcal{N}_{j_2}$, $j_1\neq j_2$, are extensionally isomorphic. Let $\rho$ be a nonlinearity and suppose that $\bm{1},\outmap{\mathcal{N}_1}{\rho},\outmap{\mathcal{N}_2}{\rho},\dots,\outmap{\mathcal{N}_n}{\rho}$ are linearly dependent as functions $\R^{V_{in}}\to \R$. Then there exists a non-degenerate clones-free LFNN $\mathcal{M}=(V^\mathcal{M},E^\mathcal{M},V_{in}^\mathcal{M},V_{out}^\mathcal{M},\Omega^\mathcal{M},\Theta^\mathcal{M})$ (obtained by modifying $\bigvee_{k=1}^n\mathcal{N}_k$) with a single input node $V_{in}^\mathcal{M}=\{v_{in}\}$, such that $\{\outmap{w}{\rho}:w\in V_{out}^\mathcal{M}\}\cup \{\bm{1}\}$ is a linearly dependent set of functions from $\R$ to $\R$.
\end{lemma}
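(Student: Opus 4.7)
The plan is to first amalgamate $\mathcal{N}_1,\dots,\mathcal{N}_n$ into a single non-degenerate clones-free multi-output LFNN via Proposition \ref{amalgamprop}, and then prepend a fresh input layer so as to reduce the number of input nodes to one while preserving the linear dependency among the output maps.

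For the amalgamation step, set $\mathcal{A}\coleqq \bigvee_{k=1}^n \mathcal{N}_k$. The extensional embeddings $\pi_j:V^{\mathcal{N}_j}\to V^{\mathcal{A}}$ furnish output nodes $w_j\coleqq\pi_j(v_{out})$ satisfying $\outmapT{w_j}{\rho}{\mathcal{A}}=\outmap{\mathcal{N}_j}{\rho}$. I would check that the $w_j$ are pairwise distinct: if $w_i=w_j$ for $i\neq j$, then the ancestor subnetworks $\mathcal{A}(\{w_i\})$ and $\mathcal{A}(\{w_j\})$ coincide, and each is extensionally isomorphic to $\mathcal{N}_i$ and $\mathcal{N}_j$ via $\pi_i$ and $\pi_j$, yielding $\mathcal{N}_i\eisom\mathcal{N}_j$ and contradicting the hypothesis. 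Hence $V_{out}^\mathcal{A}=\{w_1,\dots,w_n\}$ has $n$ distinct elements, and the assumed dependency $\lambda_0\bm{1}+\sum_{j=1}^n\lambda_j\outmap{\mathcal{N}_j}{\rho}\equiv 0$ on $\R^{V_{in}}$ transcribes verbatim into a nontrivial dependency between $\bm{1}$ and $\{\outmapT{w_j}{\rho}{\mathcal{A}}\}_{j=1}^n$.

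To reach a single-input network, I enumerate $V_{in}=\{u_1,\dots,u_m\}$, fix pairwise distinct nonzero reals $a_1,\dots,a_m$ and arbitrary biases $b_1,\dots,b_m$, and build $\mathcal{M}$ from $\mathcal{A}$ by adjoining a single new input $v_{in}$, inserting new level-one nodes $u_i'$ as children of $v_{in}$ with $\omega_{u_i'v_{in}}=a_i$ and $\theta_{u_i'}=b_i$, deleting $u_1,\dots,u_m$, and redirecting every old edge $(u_i,v)\in E^\mathcal{A}$ to become $(u_i',v)$ with the original weight. Shifting all old levels up by one, $\mathcal{M}$ is a single-input LFNN whose output set I take to be $V_{out}^\mathcal{M}\coleqq\{w_1,\dots,w_n\}$. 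A level-by-level induction then gives $\outmapT{v}{\rho}{\mathcal{M}}(t)=\outmapT{v}{\rho}{\mathcal{A}}(g(t))$ for every $v\in V^\mathcal{A}\setminus V_{in}$, where $g:\R\to\R^{V_{in}}$ is the map with $g(t)_{u_i}=\rho(a_i t+b_i)$. Pulling the dependency back along $g$ and using $\bm{1}\circ g=\bm{1}$ yields $\lambda_0\bm{1}+\sum_{j=1}^n\lambda_j\outmapT{w_j}{\rho}{\mathcal{M}}\equiv 0$ on $\R$; if all $\lambda_j$ for $j\geq 1$ vanished we would force $\lambda_0=0$, contradicting nontriviality of the original dependency.

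The step I expect to require the most care is verifying that $\mathcal{M}$ is simultaneously non-degenerate and clones-free. Non-degeneracy follows because every $u_i$ had at least one outgoing edge in $\mathcal{A}$ (non-degeneracy of $\mathcal{A}$ forbids non-output sinks), and those edges are inherited by $u_i'$; also $v_{in}$ itself reaches every $w_j$ through some $u_i'$. For clones-freeness, I exploit that in an LFNN any clone pair must sit at the same level: the new level-one nodes $u_i',u_j'$ with $i\neq j$ share the unique parent $v_{in}$ but are distinguished by $a_i\neq a_j$, while any same-level pair among the shifted ``old'' nodes corresponds, under the bijection $u_i\leftrightarrow u_i'$, to a same-level pair in $\mathcal{A}$ with identical parent sets, weights, and biases, and therefore cannot be clones since $\mathcal{A}$ is clones-free. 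This structural check, together with the dependency bookkeeping above, completes the proof.
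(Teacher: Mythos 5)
Your proof is correct and follows essentially the same strategy as the paper's: amalgamation via Proposition \ref{amalgamprop}, distinctness of the output nodes $w_j$ from pairwise extensional non-isomorphism plus non-degeneracy, and attachment of a single fresh input node with pairwise distinct first-layer weights to preserve the no-clones property while pulling back the linear dependency. The only (immaterial) difference is the order of the two steps — the paper first glues $v_{in}$ onto each $\mathcal{N}_j$ and then amalgamates the resulting single-input networks, letting the amalgam construction absorb the clone bookkeeping that you instead verify by hand after prepending the input to $\bigvee_{k=1}^n\mathcal{N}_k$.
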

\begin{proof}
We first create a new node $v_{in}$ and select an arbitrary set $\{\omega_{\widetilde{v}v_{in}}:\widetilde{v}\in V_{in}\}\subset\R\setminus\{0\}$ of cardinality $\# V_{in}$. Now, we enlarge each $\mathcal{N}_j$ to a new network $\widetilde{\mathcal{N}}_j$ by gluing the node $v_{in}$ to the set $V_{in}$ through the edges $\{(v_{in},\widetilde{v}):\widetilde{v}\in V_{in}\}$ along with the corresponding weights $\omega_{\widetilde{v}v_{in}}$. The nodes $v\in V_{in}$ are non-input nodes of the $\widetilde{\mathcal{N}}_j$, as their parent sets $\pre_{\widetilde{\mathcal{N}}_j}(v)=\{v_{in}\}$ are non-empty, and we set their biases $\theta_{v}$ to $0$. The node $v_{in}$ is now the shared single input of the networks $\widetilde{\mathcal{N}}_j$, $j=1,\dots, n$. Note that, as the networks $\mathcal{N}_j$ are clones-free, and the weights $\omega_{\widetilde{v}v_{in}}$ are distinct, the networks $\widetilde{\mathcal{N}}_j$ are clones-free by assumption. Further, since ${\mathcal{N}}_j$, $j\in\{1,\dots, n\}$, are pairwise non-isomorphic, so are the $\widetilde{\mathcal{N}}_j$, $j\in\{1,\dots, n\}$.
We now construct a network $\mathcal{M}$ by amalgamating $\widetilde{\mathcal{N}}_j$, $j=1,\dots, n$, according to $\mathcal{M}=(\dots(\widetilde{\mathcal{N}}_1\vee \widetilde{\mathcal{N}}_2)\vee \dots)\vee \widetilde{\mathcal{N}}_n$. Denote by $\pi_{j}:V^{\widetilde{\mathcal{N}}_j}\to \pi_{j}(V^{\widetilde{\mathcal{N}}_j})\subset V^{\mathcal{M}}$ the extensional isomorphism between $\widetilde{\mathcal{N}}_j$ and the corresponding subnetwork of $\mathcal{M}$, and let $w_j=\pi_j(v_{out})$ be the node of $\mathcal{M}$ corresponding to the output node of $\mathcal{N}_j$. We claim that $w_{j_1}\neq w_{j_2}$, for $j_1\neq j_2$. To see this, take $j_1,j_2$ such that $w_{j_1}= w_{j_2}$, i.e., $\pi_{j_1}(v_{out})=\pi_{j_2}(v_{out})$. Then, by Property  {(i)} of Definition \ref{def:amaldef}, $\widetilde{\mathcal{N}}_{j_1}(v_{out})\eisom \widetilde{\mathcal{N}}_{j_2}(v_{out})$, and therefore ${\mathcal{N}}_{j_1}(v_{out})\eisom{\mathcal{N}}_{j_2}(v_{out})$ as well. But $\mathcal{N}_{j_1}(v_{out})=\mathcal{N}_{j_1}$ and $\mathcal{N}_{j_2}(v_{out})=\mathcal{N}_{j_2}$ by the non-degeneracy assumption, and hence $\mathcal{N}_{j_1}\eisom\mathcal{N}_{j_2}$. It follows that $j_1=j_2$, as $\mathcal{N}_j$, $j=1,\dots, n$, are assumed to be pairwise non-isomorphic. Thus the $w_j$ are, indeed, distinct nodes of $\mathcal{M}$, and we have $V_{out}^\mathcal{M}=\{w_1,w_2,\dots, w_n\}$.
As $\bm{1},\outmap{\mathcal{N}_1}{\rho},\outmap{\mathcal{N}_2}{\rho},\dots,\outmap{\mathcal{N}_n}{\rho}$ are linearly dependent by assumption, there exists a nonzero vector $(c,\lambda_1,\lambda_2,\dots,\lambda_n)\in\R^{n+1}$ such that $\left(c\,\bm{1}+\sum_{j=1}^n \lambda_j \outmap{\mathcal{N}_j}{\rho}\right)\big((t_v)_{v\in V_{in}}\big)=0$, for all $(t_v)_{v\in V_{in}}\in\R^{V_{in}}$. We then have
\begin{equation*}
\begin{aligned}
\Big(c\,\bm{1}+\sum_{j=1}^n \lambda_j \outmapT{w_j}{\rho}{\mathcal{M}}\Big)(t)&=\Big(c\,\bm{1}+\sum_{j=1}^n \lambda_j \outmapT{\pi_j(v_{out})}{\rho}{\mathcal{M}}\Big)(t)=\Big(c\,\bm{1}+\sum_{j=1}^n \lambda_j\outmapT{v_{out}}{\rho}{\widetilde{\mathcal{N}}_j}\Big)(t)\\
&=\Big(c\,\bm{1}+\sum_{j=1}^n \lambda_j\outmap{\mathcal{N}_j}{\rho}\Big)\big((\omega_{\tilde{v}v_{in}}t)_{\tilde{v}\in V_{in}}\big)=0,
\end{aligned}
\end{equation*}
for all $t\in\R$. This establishes that $\{\outmapT{w_1}{\rho}{\mathcal{M}},\outmapT{w_2}{\rho}{\mathcal{M}},\dots, \outmapT{w_n}{\rho}{\mathcal{M}}\}\cup\{\bm{1}\}$ is a linearly dependent set, so $\mathcal{M}$ is the desired network.
\end{proof}
Before stating the next lemma, we describe the procedure of input anchoring, which is a method for selecting and modifying a subnetwork of a non-degenerate GFNN in a manner that preserves linear dependencies between the maps realized by the output nodes of the original network.
Concretely, let $\mathcal{M}=(V^\mathcal{M},E^\mathcal{M},V_{in}^{\mathcal{M}},V_{out}^\mathcal{M},\allowbreak\Omega^{\mathcal{M}},\allowbreak\Theta^{\mathcal{M}})$ be a non-degenerate, clones-free GFNN with input nodes $V_{in}^\mathcal{M}=\{v_1^0,\dots, v_{D_0}^0\}$, $D_0\geq 2$. For specificity, let w.l.o.g. $v_{D_{0}}^0$ be the input node to be anchored, and let $a\in\R$ be the value $v_{D_{0}}^0$  is anchored to. Furthermore, let $\rho$ be a nonlinearity. We seek to construct a network ${\mathcal{M}}_a=(V^{{\mathcal{M}_a}},E^{{\mathcal{M}_a}},V_{in}^{{\mathcal{M}_a}},V_{out}^{{\mathcal{M}_a}},\Omega^{{\mathcal{M}_a}},\Theta^{{\mathcal{M}_a}})$ with $V_{in}^{{\mathcal{M}_a}}= \{v_1^0,\dots, v_{D_0-1}^0\}$ and $V_{out}^{{\mathcal{M}_a}}=V_{out}^{\mathcal{M}}\cap V^{{\mathcal{M}_a}}$ satisfying the following two properties:
\begin{itemize}[--]
\item[(IA-1)] For all $w\in V_{out}^{{\mathcal{M}_a}}$,
\begin{equation*} 
\outmapT{w}{\rho}{\mathcal{M}_a}\!\left(t_1,t_2,\dots,t_{D_0-1}\right)=\outmapT{w}{\rho}{\mathcal{M}}\!\left(t_1,t_2,\dots,t_{D_0-1},a\right),
\end{equation*}
 for all $(t_1,t_2,\dots,t_{D_0-1})\in\R^{D_0-1}$ (after identifying $\R^{V_{in}}$ with $\R^{D_0}$).
\item[(IA-2)]\label{it:InpFixMainItem2} For all $w\in V_{out}^{\mathcal{M}}\setminus V_{out}^{{\mathcal{M}_a}}$, the function $\R^{D_0-1}\to \R$ given by
\begin{equation*} 
(t_1,t_2,\dots,t_{D_0-1})\mapsto \outmapT{w}{\rho}{\mathcal{M}}\!\left(t_1,t_2,\dots,t_{D_0-1},a\right)
\end{equation*}
is constant, and we denote its value by $\outmapT{w}{\rho}{\mathcal{M}}\!\left(a\right)$.
\end{itemize}
As $V^{{\mathcal{M}_a}}\subset V^{{\mathcal{M}}}\setminus\{v_{D_0}^0\}$, the network $\mathcal{M}_a$ will, indeed, have fewer nodes than $\mathcal{M}$.
Now suppose that $\mathcal{M}_a$ is such a network, and suppose that $\{w^{\,\rho,\,\mathcal{M}}\}_{w\in V_{out}^\mathcal{M}}$ is a linearly dependent set of functions $\R^{D_0}\to \R$. In particular, let $(\lambda_w)_{w\in V_{out}^{\mathcal{M}}}$ be a nonzero set of scalars such that
\begin{equation*}
\sum_{w\in V_{out}^\mathcal{M}}\lambda_{w} \outmapT{w}{\rho}{\mathcal{M}}=0.
\end{equation*}
 We then have
\begin{equation*}
\left(\sum_{w\in V_{out}^{\mathcal{M}}\setminus V_{out}^{\mathcal{M}_a}}\lambda_{w}\outmapT{w}{\rho}{\mathcal{M}}\!(a)\right)\bm{1}+\sum_{w\in V_{out}^{\mathcal{M}_a}}\lambda_{w} \outmapT{w}{\rho}{\mathcal{M}_a}=\sum_{w\in V_{out}^\mathcal{M}}\lambda_{w} \outmapT{w}{\rho}{\mathcal{M}}=0,
\end{equation*}
and thus $\{ \outmapT{w}{\rho}{\mathcal{M}_a}\}_{w\in V_{out}^{\mathcal{M}_a}}\cup \{\bm{1}\}$ is a linearly dependent set of functions $\R^{D_0-1}\to \R$. Apropos, this derivation illustrates why it is often convenient to include the constant function $\bm{1}$ when dealing with linear dependencies between the outputs of GFNNs. In the following definition we construct a network $\mathcal{M}_a$ with the desired properties, and in Figure \ref{fig:GeneralFixing} we provide an illustration of this construction.
\begin{definition}\label{def:InputFix}
Let $\mathcal{M}=(V^{\mathcal{M}},E^{\mathcal{M}},V_{in}^{\mathcal{M}},V_{out}^\mathcal{M},\Omega^{\mathcal{M}},\allowbreak\Theta^{\mathcal{M}})$ be a non-degenerate, clones-free GFNN with input nodes $V_{in}^\mathcal{M}=\{v_1^0,\dots, v_{D_0}^0\}$, $D_0\geq 2$. Let $a\in\R$, and let $\rho$ be a nonlinearity. 
The \emph{network obtained from $\mathcal{M}$ by anchoring the input $v_{D_0}^0$ to $a$} is the GFNN
${\mathcal{M}}_a=(V^{{\mathcal{M}_a}},E^{{\mathcal{M}_a}},V_{in}^{{\mathcal{M}_a}},V_{out}^{{\mathcal{M}_a}},\Omega^{{\mathcal{M}_a}},\allowbreak \Theta^{{\mathcal{M}_a}})$ given by the following:
\begin{itemize}[--]
\item $V^{{\mathcal{M}_a}}=\{v\in V^{\mathcal{M}}:\{v_1^0,\dots, v_{D_0-1}^0\}\cap V^{\mathcal{M}(v)}\neq\varnothing  
\}$, where $\mathcal{M}(v)$ denotes the ancestor network of $v$,
\item $E^{{\mathcal{M}_a}}=\{(v,\widetilde{v}), v,\widetilde{v}\in V^{{\mathcal{M}_a}} \}$,
\item $V_{in}^{{\mathcal{M}_a}}= \{v_1^0,\dots, v_{D_0-1}^0\}$, $V_{out}^{{\mathcal{M}_a}}=V_{out}^{\mathcal{M}}\cap V^{{\mathcal{M}_a}}$, and
\item $\Omega^{{\mathcal{M}_a}}=\{\omega_{\widetilde{v}v}: (v,\widetilde{v})\in E^{{\mathcal{M}_a}}\}$.
\item For a node $v\in V^{\mathcal{M}}\setminus V^{{\mathcal{M}_a}}$  
we define recursively
\begin{equation}\label{eq:InpFixBiasMod}
a_v=\begin{cases}
a, & v=v_{D_0}^0\\
\rho\left({\textstyle \sum_{u\in\pre_{\mathcal{M}}(v) }}\omega_{vu}a_u+\theta_v\right), & v\neq v_{D_0}^0
\end{cases}.
\end{equation}
(Note that all $a_v$ are well-defined, as $\pre_{\mathcal{M}}(v)\subset V^{\mathcal{M}} \setminus V^{{\mathcal{M}_a}}$ whenever $v\in V^{\mathcal{M}}\setminus V^{{\mathcal{M}_a}}$.)
Now, for $v\in V^{{\mathcal{M}_a}}$ let
\begin{equation}\label{eq:InpFixBiasModTilde}
 \widetilde{\theta}_{v}=\theta_v+\sum_{u\in\pre_{\mathcal{M}}(v)\setminus V^{{\mathcal{M}_a}}}\omega_{vu}a_u,
\end{equation}
and set $\Theta^{{\mathcal{M}_a}}=\{\widetilde{\theta}_v:v\in V^{{\mathcal{M}_a}}\}$.
\end{itemize}
\end{definition}
\begin{figure}[h!]\centering
\includegraphics[height=55mm,angle=0]{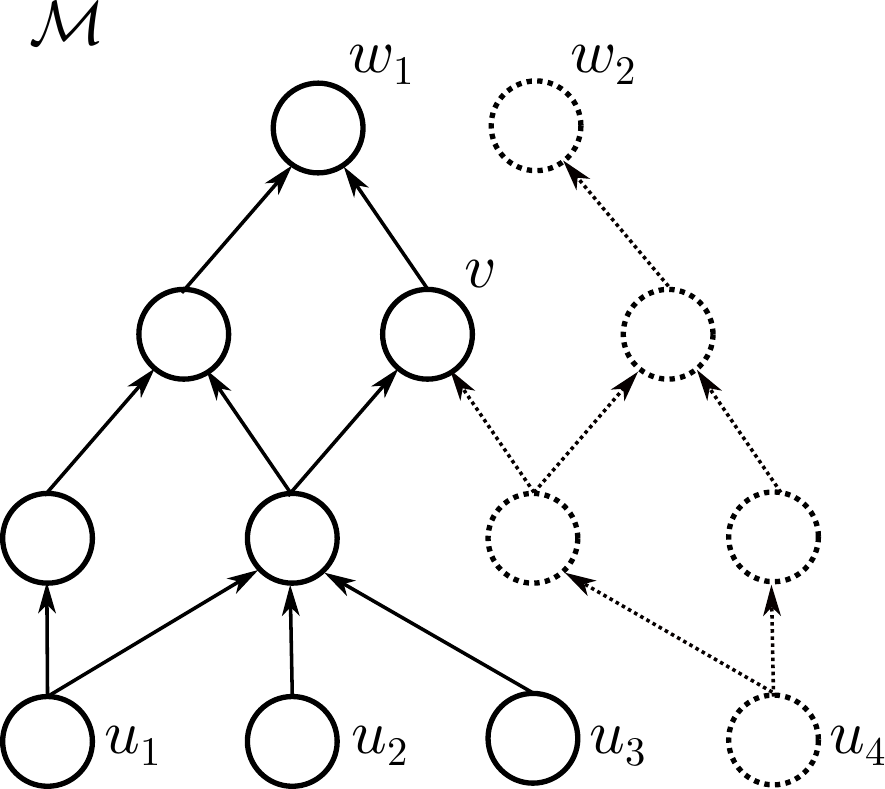}
\caption{A concrete example of anchoring the input at $u_4$ of a network $\mathcal{M}$ with input nodes $\{u_1,u_2,u_3,u_4\}$ and output nodes $\{w_1,w_2\}$ to a real number $a$. The parts of $\mathcal{M}$ that are connected to $u_4$, but not to any of the remaining inputs $u_1,u_2,u_3$ (dashed lines), are removed, while the rest of $\mathcal{M}$ constitutes $\mathcal{M}_a$.  
To ensure that the outputs of $\mathcal{M}_a$ (in this case only the node $w_1$) obey (IA-1), we need to ``propagate'' the anchored value through the removed parts of $\mathcal{M}$. This will manifest itself as a bias modification according to \eqref{eq:InpFixBiasMod} and \eqref{eq:InpFixBiasModTilde} at some of the nodes of $\mathcal{M}_a$ (the only such node in this example is labeled by $v$).
 \label{fig:GeneralFixing}}
\end{figure}
The network ${\mathcal{M}_a}$ satisfies  
(IA-1) and (IA-2) by construction, and if $\mathcal{M}$ is layered, then so is ${\mathcal{M}_a}$.
Moreover, ${\mathcal{M}_a}$ is non-degenerate. To see this, let $v\in V^{\mathcal{M}_a}$ be arbitrary. Then, by non-degeneracy of $\mathcal{M}$, there exists a $w\in V^{\mathcal{M}}_{out}$ such that $v\in V^{\mathcal{M}(w)}$. As $w$ is connected directly with a node in $V^{\mathcal{M}_a}$, it follows that $w\in V^{{\mathcal{M}_a}}$, and so $w\in V_{out}^{\mathcal{M}_a}$.  
 
Therefore $v\in V^{\mathcal{M}_a(w)}$, and, as $v$ was arbitrary, we obtain $V^{\mathcal{M}_a}\subset \bigcup_{w\in V_{out}^{\mathcal{M}_a}}V^{\mathcal{M}_a(w)}$, establishing by Definition \ref{def:NonDeg} that ${\mathcal{M}_a}$ is non-degenerate.
However, ${\mathcal{M}_a}$ will not, generally, be clones-free. This is unfortunate, as our program for proving Theorem \ref{LITheoremIntro} envisages maintaining the no-clones property when constructing networks with linearly dependent outputs. However, not all is lost, as the following lemma says that, for nonlinearities holomorphic on a neighborhood of $\R$, either there exists some value of $a\in\R$ such that the network ${\mathcal{M}_a}$ is, indeed, clones-free, or it is possible to modify a subnetwork of $\mathcal{M}$ (different from the subnetwork giving rise to $\mathcal{M}_a$) to yield a clones-free subnetwork $\mathcal{N}$ of $\mathcal{M}$ with input $\{v_{D_0}^0\}$ and linear dependency among the maps realized by its output nodes. This will be sufficient for our purposes.
\begin{lemma}[Input anchoring]\label{InputFixingLemma}
Let $\mathcal{M}=(V^{\mathcal{M}},E^{\mathcal{M}},V_{in}^{\mathcal{M}},V_{out}^\mathcal{M},\Omega^{\mathcal{M}},\Theta^{\mathcal{M}})$, be a non-degenerate, clones-free GFNN with input nodes $V_{in}^\mathcal{M}=\{v_1^0,\dots, v_{D_0}^0\}$, $D_0\geq 2$. Let $\rho:\mathcal{U}\to\R$ be holomorphic on an open domain $\mathcal{U}\subset \C$ containing $\R$, such that $\rho(\R)\subset\R$. Let ${\mathcal{M}}_a$ denote the network obtained by anchoring the input $v_{D_0}^0$ to some $a\in\R$, according to Definition \ref{def:InputFix}. Then one of the following two statements must be true:
\begin{enumerate}[(i)]
\item There exists an $a\in\R$ such that ${\mathcal{M}}_a$ is clones-free.
\item There exist a non-degenerate clones-free GFNN $\mathcal{N}=(V^{\mathcal{N}},E^{\mathcal{N}},\{v_{D_0}^0\},V_{out}^\mathcal{N},\Omega^{\mathcal{N}},\Theta^{\mathcal{N}})$  
 (obtained by modifying a subnetwork of $\mathcal{M}$), a real number $\lambda_0$, and nonzero real numbers $(\lambda_w)_{w\in V_{out}^\mathcal{N}}$, such that the function $h_{out}^{\mathcal{N}}:=\lambda_0\,\bm{1}+\sum_{w\in V_{out}^\mathcal{N}}\lambda_{w} w^{\rho,\,\mathcal{N}}$ is identically zero on $\R$.
\end{enumerate}
\end{lemma}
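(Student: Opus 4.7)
The plan is to argue by contradiction: suppose (i) fails and construct a network $\mathcal{N}$ fulfilling (ii).

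\textbf{Step 1: Pigeonhole and separation of structural from analytic conditions.} First, observe that $V^{\mathcal{M}_a}$, the parent structure $\pre_{\mathcal{M}_a}$, and all edge weights of $\mathcal{M}_a$ are determined by Definition~\ref{def:InputFix} from the graph of $\mathcal{M}$ and the choice of anchored input alone, so only the biases $\widetilde{\theta}_v(a)$ given by~\eqref{eq:InpFixBiasModTilde} depend on $a$. If (i) fails, then for every $a\in\R$ the network $\mathcal{M}_a$ has a clone pair. Since there are only finitely many ordered pairs of distinct nodes in $V^{\mathcal{M}_a}$, the pigeonhole principle yields a single pair $(v_1,v_2)$ and an uncountable set $A\subset\R$ such that $(v_1,v_2)$ is a clone pair in $\mathcal{M}_a$ for every $a\in A$. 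The $a$-independent clone conditions $\pre_{\mathcal{M}_a}(v_1)=\pre_{\mathcal{M}_a}(v_2)$ and $\omega_{v_1 u}=\omega_{v_2 u}$ (for $u$ in the common $\mathcal{M}_a$-parent set) must therefore hold unconditionally, leaving only the bias identity $\widetilde{\theta}_{v_1}(a)=\widetilde{\theta}_{v_2}(a)$ to hold on $A$.

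\textbf{Step 2: Analytic continuation of the bias identity.} Next I would promote the bias equality from $A$ to all of $\R$. Write $P_i:=\pre_{\mathcal{M}}(v_i)\setminus V^{\mathcal{M}_a}$ for $i\in\{1,2\}$. By~\eqref{eq:InpFixBiasMod} and the holomorphy of $\rho$ on $\mathcal{U}\supset\R$, each $a_u(a)$ is holomorphic on an open neighborhood of $\R$ and coincides with $\outmapT{u}{\rho}{\mathcal{S}}(a)$, where $\mathcal{S}$ is the ancestor subnetwork of $V^\mathcal{M}\setminus V^{\mathcal{M}_a}$ in $\mathcal{M}$; since $v_{D_0}^0$ is the unique input of $\mathcal{M}$ not in $V^{\mathcal{M}_a}$, every non-input node outside $V^{\mathcal{M}_a}$ has $v_{D_0}^0$ among its ancestors, so $\mathcal{S}$ is a GFNN with single input $v_{D_0}^0$. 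The difference $\widetilde{\theta}_{v_1}-\widetilde{\theta}_{v_2}$ is thus holomorphic near $\R$ and vanishes on the uncountable set $A$, which has an accumulation point; the identity theorem gives
\begin{equation*}
(\theta_{v_1}-\theta_{v_2})\cdot\bm{1}+\sum_{u\in P_1}\omega_{v_1 u}\outmapT{u}{\rho}{\mathcal{S}}-\sum_{u\in P_2}\omega_{v_2 u}\outmapT{u}{\rho}{\mathcal{S}}\equiv 0\quad\text{on }\R.
\end{equation*}

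\textbf{Step 3: Extraction of a clones-free $\mathcal{N}$ with the required dependency.} For $u\in P_1\cup P_2$, set $\mu_u:=\omega_{v_1 u}\mathbbm{1}_{\{u\in P_1\}}-\omega_{v_2 u}\mathbbm{1}_{\{u\in P_2\}}$, and let $V_{out}^{\mathcal{N}}:=\{u\in P_1\cup P_2:\mu_u\neq 0\}$. If $V_{out}^{\mathcal{N}}$ were empty, then $P_1=P_2$ with $\omega_{v_1 u}=\omega_{v_2 u}$ on $P_1$, and the displayed identity would force $\theta_{v_1}=\theta_{v_2}$; combined with the $a$-independent structural conditions, this would make $(v_1,v_2)$ a clone pair in $\mathcal{M}$ itself, contradicting the no-clones hypothesis. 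Hence $V_{out}^{\mathcal{N}}\neq\varnothing$. Define $\mathcal{N}$ to be the ancestor subnetwork of $V_{out}^{\mathcal{N}}$ in $\mathcal{M}$, with designated output set $V_{out}^{\mathcal{N}}$. By the observation above, $\mathcal{N}$ has single input $\{v_{D_0}^0\}$; it is non-degenerate by construction, and inherits the no-clones property from $\mathcal{M}$ because parent sets, weights, and biases of nodes in any ancestor subnetwork coincide with those in the ambient network. Choosing $\lambda_0:=\theta_{v_1}-\theta_{v_2}$ and $\lambda_w:=\mu_w\neq 0$ for $w\in V_{out}^{\mathcal{N}}$ yields the linear dependency asserted in (ii).

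\textbf{Main obstacle.} The core difficulty is the analytic continuation step: one must cleanly separate the $a$-independent graph and weight conditions, which become properties of $\mathcal{M}$ alone, from the genuinely $a$-dependent bias identity, and then pigeonhole over finitely many candidate pairs to obtain a witness set large enough for the identity theorem. A secondary but essential subtlety is verifying that $V_{out}^{\mathcal{N}}$ is nonempty, since this is precisely where the clones-free hypothesis on $\mathcal{M}$ is invoked to rule out the degenerate case in which all coefficients $\mu_u$ vanish.
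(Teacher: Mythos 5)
Your proof proceeds along a genuinely different route from the paper's: you pigeonhole over finitely many candidate clone pairs to get one pair $(v_1,v_2)$ that are clones in $\mathcal{M}_a$ for an uncountable set $A$ (the paper instead decomposes $\R$ into the sets $E_{(c_1,c_2)}$ and picks one that is not discrete---both reach the same place), you cleanly separate the $a$-independent structural conditions from the $a$-dependent bias identity, and you analytically continue the \emph{pre-activation} bias identity
\begin{equation*}
(\theta_{v_1}-\theta_{v_2})+\sum_{u\in P_1}\omega_{v_1 u}\,a_u-\sum_{u\in P_2}\omega_{v_2 u}\,a_u\equiv 0
\end{equation*}
directly, whereas the paper first adds a constant $r$ to both biases and then applies $\rho$ to both sides of the clone identity, continuing the \emph{post-activation} identity $\outmapT{c_1}{\rho}{\mathcal{N}}(a)=\outmapT{c_2}{\rho}{\mathcal{N}}(a)$. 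Correspondingly, you take the output set of $\mathcal{N}$ to be a subset of the \emph{parents} of $v_1,v_2$ lying in the cut-out part, while the paper takes the output set to be the \emph{clone nodes} $\{c_1,c_2\}$ themselves, restricting their parent sets to $S$ and perturbing their biases by $r$ so that they do not become clones of nodes already in $S$.

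However, there is a gap in your Step 3. The set $P_1\cup P_2$ lies in $V^{\mathcal{M}}\setminus V^{\mathcal{M}_a}$, and the unique \emph{input} node of $\mathcal{M}$ in this set is $v_{D_0}^0$. Nothing in the clone condition for $\mathcal{M}_a$ controls the edges from $v_{D_0}^0$ to $v_1$ or $v_2$, so it is entirely possible that $v_{D_0}^0\in P_1\cup P_2$ with $\mu_{v_{D_0}^0}\neq 0$ (for instance, $v_{D_0}^0\in\pre_{\mathcal{M}}(v_1)$ while $v_{D_0}^0\notin\pre_{\mathcal{M}}(v_2)$, giving $\mu_{v_{D_0}^0}=\omega_{v_1 v_{D_0}^0}\neq 0$). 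In that case $v_{D_0}^0\in V_{out}^{\mathcal{N}}$, which is disallowed: Definition~\ref{def:GFNN} requires $V_{out}\subset V\setminus V_{in}$, and $v_{D_0}^0$ is the sole input of your $\mathcal{N}$. The underlying analytic problem is that $\outmapT{v_{D_0}^0}{\rho}{\mathcal{S}}(t)=t$ is the \emph{identity} function, so the dependency you obtain is a relation among $\bm{1}$, the identity $t\mapsto t$, and the outputs $\outmapT{u}{\rho}{\mathcal{N}}$, which is not of the form $h_{out}^{\mathcal{N}}=\lambda_0\bm{1}+\sum_w\lambda_w w^{\rho,\mathcal{N}}$ required by (ii). You cannot simply drop $v_{D_0}^0$ from the output set, because $\rho$ is an arbitrary holomorphic nonlinearity and there is no a priori reason the linear term can be absorbed by the remaining $\rho(\cdots)$ terms. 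The paper's extra steps---adding $r$ and applying $\rho$ before continuing---are precisely what avoid this: the resulting outputs are the non-input clone nodes $c_1,c_2$, so the identity function never enters, and the bias shift $r$ is then needed exactly because the clone nodes' parent sets are being truncated, which could otherwise create a spurious clone pair between $c_j$ and a node in $S$. Your argument therefore needs the "apply $\rho$" step (and with it the $r$-perturbation and the hybrid-subnetwork construction) to be completed.
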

\begin{proof}
For a pair of nodes $(c_1,c_2)\in V^{{\mathcal{M}}}\times V^{{\mathcal{M}}}$ define
\begin{equation*}
E_{(c_1,\,c_2)}=\{a\in\R: c_1,c_2\in V^{\mathcal{M}_a},\text{ and }c_1,c_2 \text{ are clones in } {\mathcal{M}}_a\}.
\end{equation*}
Suppose that {(i)} is false, so that, for every $a\in\R$, we have $a\in E_{(c_1,\,c_2)}$ for some $(c_1,c_2)$. Then we can write $\R$ as a finite union
\begin{equation*}
\R=\bigcup_{(c_1,\,c_2)\in V^{{\mathcal{M}}}\times V^{{\mathcal{M}}}}E_{(c_1,\,c_2)}.
\end{equation*}
It follows that there exists a pair $(c_1,c_2)$ such that at least one of the sets $E_{(c_1,c_2)}$ is not discrete, i.e., it has a limit point. Fix such a pair $(c_1,c_2)$.
Note that we have $v_{D_0}^0\in V^{\mathcal{M}(c_j)}$, for at least one of $j=1$ or $j=2$, as otherwise we would have 
$
\pre_{\mathcal{M}_a}(c_j)=\pre_{\mathcal{M}}(c_j)
$,
for $j\in\{1,2\}$ and all $a\in E_{(c_1,\,c_2)}$, and thus $c_1$, $c_2$ would be clones in $\mathcal{M}_a$ if and only if they are clones in $\mathcal{M}$. But, by the no-clones property of $\mathcal{M}$, this would imply $E_{(c_1,\,c_2)}= \varnothing$, contradicting the fact that $E_{(c_1,c_2)}$ is not discrete. Thus, we may w.l.o.g. assume that $v_{D_0}^0\in V^{\mathcal{M}(c_1)}$, which leaves us with the cases $v_{D_0}^0\in V^{\mathcal{M}(c_2)}$ and $v_{D_0}^0\notin V^{\mathcal{M}(c_2)}$ that will be treated separately when needed. Define the GFNN
$\mathcal{N}=(V^{\mathcal{N}},E^{\mathcal{N}},\{v_{D_0}^0\},V_{out}^\mathcal{N},\Omega^{\mathcal{N}},\Theta^{\mathcal{N}})$ according to the following:
\begin{itemize}[--]
\item Let $S=\{v\in V^{\mathcal{M}(\{c_1,c_2\})}: V_{in}^{\mathcal{M}}\cap V^{\mathcal{M}(v)}=\{v_{D_0}^0\} \}$, and set
\begin{equation*}
V^{\mathcal{N}}=\begin{cases}
S \cup\{c_1,c_2\}, &\text{if \;} v_{D_0}^0\in V^{\mathcal{M}(c_2)} \\
S \cup\{c_1\}, &\text{if \;}v_{D_0}^0\notin V^{\mathcal{M}(c_2)}\\
\end{cases}.
\end{equation*}
\item $E^{{\mathcal{N}}}=\{(v, \widetilde{v}), \; v, \widetilde{v}\in V^{{\mathcal{N}}} \}$,
\item $V_{out}^\mathcal{N}=\{c_1,c_2\}\cap V^{\mathcal{N}}$,
\item $\Omega^{\mathcal{N}}=\{\omega_{ \widetilde{v}v}: (v, \widetilde{v})\in E^{\mathcal{N}}\}$,
\item choose a number $r\in\R\setminus\big(\{\theta_v-\theta_{c_1}: v\in S\}\cup \{\theta_v-\theta_{c_2}: v\in S\}\big)$, and set $\overline{\theta}_{c_1}={\theta}_{c_1}+r$,  $\overline{\theta}_{c_2}={\theta}_{c_2}+r$, and $\overline{\theta}_v=\theta_v$, for $v\in S$. Define $\Theta^{\mathcal{N}}=\{\overline{\theta}_v:v\in V^{\mathcal{N}}\}$.
\end{itemize}
Informally, the so-constructed network $\mathcal{N}$ consists of the parts of $\mathcal{M}$ propagating the input at $v_{D_0}^0$ to $c_1$ and $c_2$ (and it might happen that this input does not reach $c_2$, in which case this node is not included in $V^{\mathcal{N}}$), and the biases  $\overline{\theta}_{c_1}$ and $\overline{\theta}_{c_2}$ are chosen so as to ensure that $\mathcal{N}$ has no clone pair $(v,\tilde{v})$ with $v\in\{c_1,c_2\}$ and $\tilde{v}\in S$. Thus, in order to show that $\mathcal{N}$ is clones-free, it suffices to establish that $c_1$ and $c_2$ are not clones in $\mathcal{N}$ (note that $c_1$ and $c_2$ can be clones in $\mathcal{N}$ only in the case $ v_{D_0}^0\in V^{\mathcal{M}(c_2)}$), as any clone pair $(v,\tilde{v})$ with $v,\tilde{v}\in S$ would also be a clone pair in $\mathcal{M}$.
By way of contradiction, assume that $c_1$ and $c_2$ are clones in $\mathcal{N}$, i.e.,
\begin{align}
\pre_{\mathcal{M}}(c_1)\cap V^{\mathcal{N}}&=\pre_{\mathcal{M}}(c_2)\cap V^{\mathcal{N}}\notag\\ 
\theta_{c_1}+r&=\theta_{c_2}+r,\qquad \text{ and} \label{eq:InpFixBiasMod1}\\
(\omega_{c_1u})_{u\in\pre_{\mathcal{M}}(c_1)\cap V^{\mathcal{N}}}&=(\omega_{c_2u})_{u\in\pre_{\mathcal{M}}(c_2)\cap V^{\mathcal{N}}}.\notag
\end{align}
As the construction of $\mathcal{N}$ does not depend on $a$, we can fix an arbitrary $a\in E_{(c_1,\,c_2)}$, and the condition that $c_1$ and $c_2$ are clones in $\mathcal{M}_a$ then implies
\begin{align}
\pre_{\mathcal{M}}(c_1)\setminus V^{\mathcal{N}}&=\pre_{\mathcal{M}}(c_2)\setminus V^{\mathcal{N}},\notag\\
\theta_{c_1}+\sum_{u\in\pre_{\mathcal{M}}(c_1)\cap V^{\mathcal{N}}}\omega_{c_1u}a_u&=\theta_{c_2}+\sum_{u\in\pre_{\mathcal{M}}(c_2)\cap V^{\mathcal{N}}}\omega_{c_2u}a_u,\quad\text{and}\label{InpFixLemNCout}\\
(\omega_{c_1u})_{u\in\pre_{\mathcal{M}}(c_1)\setminus V^{\mathcal{N}}}&=(\omega_{c_2u})_{u\in\pre_{\mathcal{M}}(c_2)\setminus V^{\mathcal{N}} },\notag
\end{align}
where the real numbers $a_u$ are defined according to \eqref{eq:InpFixBiasMod}. This, together with \eqref{eq:InpFixBiasMod1}, yields
\begin{align}
\pre_{\mathcal{M}}(c_1)&=\pre_{\mathcal{M}}(c_2),\notag\\
\theta_{c_1}&=\theta_{c_2},\qquad \text{ and}\\
(\omega_{c_1u})_{u\in\pre_{\mathcal{M}}(c_1)}&=(\omega_{c_2u})_{u\in\pre_{\mathcal{M}}(c_2)},\notag
\end{align}
which would say that $c_1$ and $c_2$ are clones in $\mathcal{M}$ and hence stands in contradiction to the no-clones property of $\mathcal{M}$. This establishes the no-clones property of $\mathcal{N}$. The non-degeneracy of $\mathcal{N}$ follows by its construction.
Now, by adding $r$ to both sides of \eqref{InpFixLemNCout} and applying $\rho$, we find
\begin{equation}\label{eq:InpFixFinalident}
\outmapT{c_1}{\rho}{\mathcal{N}}\!(a)=
\begin{cases}
\outmapT{c_2}{\rho}{\mathcal{N}}\!(a), &\text{if\; }v_{D_0}^0\in V^{\mathcal{M}(c_2)}\\
\rho(\theta_{c_2}+r)\,\bm{1}(a), &\text{if\; }v_{D_0}^0\notin V^{\mathcal{M}(c_2)}\\
\end{cases},
\end{equation}
for all $a\in E_{(c_1,\,c_2)}$ (note that $\pre_{\mathcal{M}}(c_2)\cap V^{\mathcal{N}}=\varnothing$ in the case $v_{D_0}^0\notin V^{\mathcal{M}(c_2)}$, and so the sum on the right-hand side of \eqref{InpFixLemNCout} evaluates to $0$ in this case). As $\rho$ is holomorphic on an open neighborhood of $\R$ and $\rho(\R)\subset\R$, we also have that $\outmapT{c_1}{\rho}{\mathcal{N}}$, $\outmapT{c_2}{\rho}{\mathcal{N}}$ are holomorphic on a neighborhood of $\R$. Further, since $E_{(c_1,c_2)}$ has a limit point, it follows by the identity theorem \cite[Thm. 10.18]{Rudin1987} that \eqref{eq:InpFixFinalident} holds for all $a\in\R$.
We have hence shown that Statement {(ii)} is valid with this $\mathcal{N}$, and 
\begin{equation*}
\begin{aligned}
\lambda_0=0, \,\lambda_{c_1}=1,\, \lambda_{c_2}=-1,\qquad &\text{if }v_{D_0}^0\in V^{\mathcal{M}(c_2)},\text{ or}\\
\lambda_0=-\rho\,(\theta_{c_2}+r),\, \lambda_{c_1}=1,\qquad  &\text{if }v_{D_0}^0\notin V^{\mathcal{M}(c_2)}.
\end{aligned}
\end{equation*}
\end{proof}
\section{Auxiliary results from complex analysis and Kronecker's theorem}
We state the remaining auxiliary results needed in the proof of our main statements. Since these results are relatively simple consequences of standard results in complex analysis and of Kronecker's theorem, their proofs are relegated to the appendix.

Recall the definition of the natural domain $\dom_{\outmap{u}{\sigma}}$ of the map realized by a GFNN node $u$ with respect to a holomorphic nonlinearity as given in Definition \ref{def:NatDom}. 
 
In the proof of Theorem \ref{LITheoremIntro} it will be crucial that $\dom_{\outmap{u}{\sigma}}$ be connected for all nodes $u$ of a certain GFNN with a single input. The following lemma establishes this fact.  
\begin{lemma}\label{NaturalDomainLemma}
Let $\mathcal{N}=(V,E,\{v_{in}\},V_{out},\Omega,\Theta)$ be a GFNN, and let $\sigma:\dom_\sigma \to \C$ be a meromorphic function on $\C$ with its set of poles given by $P\subset\C\setminus\R$.
Furthermore, suppose that $\sigma(\R)\subset\R$. Then, for every $u\in V$, we have $\dom_{\outmap{u}{\sigma}}=\C\setminus E_u$, where $E_u\subset \C$ is a closed countable subset of $\C\setminus\R$. In particular, we have that $\dom_{\outmap{u}{\sigma}}$ is an open connected set with $\dom_{\outmap{u}{\sigma}}\supset\R$.
\end{lemma}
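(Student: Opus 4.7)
The plan is to prove the lemma by structural induction on the level $\lvl(u)$ of the node $u$. The base case $\lvl(u) = 0$ corresponds to $u = v_{in}$, where Definition \ref{def:NatDom} gives $\outmap{u}{\sigma}(z) = z$ with natural domain all of $\C$, so $E_u = \varnothing$ works. For the inductive step, assume the claim for every $v \in \pre(u)$, with $\dom_{\outmap{v}{\sigma}} = \C \setminus E_v$ and each $E_v$ closed, countable, and contained in $\C \setminus \R$. Let $A = \bigcup_{v \in \pre(u)} E_v$ (a finite union); it is again closed, countable, and disjoint from $\R$. The set $\C \setminus A$ is then open, contains $\R$, and is connected, since removing a countable set from $\C$ leaves a path-connected set (any two points can be joined by a two-segment piecewise-linear path whose intermediate vertex is chosen generically).

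Next I would introduce the pre-activation
\begin{equation*}
f_u(z) = \sum_{v \in \pre(u)} \omega_{uv}\, \outmap{v}{\sigma}(z) + \theta_u,
\end{equation*}
which is holomorphic on $\C \setminus A$. By Definition \ref{def:NatDom},
\begin{equation*}
\dom_{\outmap{u}{\sigma}} = (\C \setminus A) \setminus f_u^{-1}(P),
\end{equation*}
so I aim to take $E_u = A \cup f_u^{-1}(P)$ and verify closedness, countability, and $E_u \cap \R = \varnothing$. Closedness in $\C$ is immediate: $\dom_{\outmap{u}{\sigma}}$ is the intersection of the open set $\C \setminus A$ with the preimage under the continuous map $f_u$ of the open set $\dom_\sigma = \C \setminus P$, hence open in $\C$.

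For countability of $f_u^{-1}(P)$ I would split into two cases. If $f_u$ is constant on the connected set $\C \setminus A$, then since inductively $\outmap{v}{\sigma}(\R) \subset \R$ (this propagates via $\sigma(\R) \subset \R$ and $A \cap \R = \varnothing$), the constant value of $f_u$ is real; since $P \subset \C \setminus \R$, this value does not lie in $P$, so $f_u^{-1}(P) = \varnothing$. If $f_u$ is non-constant, then for each $p \in P$ the holomorphic function $f_u - p$ is non-constant on the connected open set $\C \setminus A$, so by the identity theorem its zero set is discrete in $\C \setminus A$, hence countable. Because $P$ is a discrete (thus countable) subset of $\C$, being the pole set of a function meromorphic on $\C$, the union $f_u^{-1}(P) = \bigcup_{p \in P} f_u^{-1}(p)$ is countable. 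Finally, $E_u \cap \R = \varnothing$ follows by combining $A \cap \R = \varnothing$ with $f_u(\R) \subset \R$ and $P \cap \R = \varnothing$.

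The main conceptual hurdle, modest as it is, lies in maintaining connectedness of $\C \setminus A$ at every step, as this is both what enables the identity theorem to preclude non-discrete zero sets of $f_u - p$ and what the lemma ultimately asserts for $\dom_{\outmap{u}{\sigma}}$ itself. The single-input hypothesis is crucial here: it keeps $f_u$ a function of one complex variable, where non-constancy alone forces discreteness of level sets. The corresponding statement for several complex variables would require a codimension argument, and indeed this is precisely why the proof of Theorem \ref{LITheoremIntro} leverages the lemma via the single-input subnetworks produced by the amalgamation and input-anchoring constructions.
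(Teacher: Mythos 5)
Your proof is correct and takes essentially the same approach as the paper: induction on node level, realizing $\C\setminus\dom_{\outmap{u}{\sigma}}=A\cup f_u^{-1}(P)$, and combining the identity theorem, the discreteness of $P$, and the reality of $f_u$ on $\R$ to get closedness, countability, and disjointness from $\R$. The only organizational difference is in the countability step: you split directly into the constant versus non-constant cases for $f_u$ and use that $P$ is countable to sum over the discrete fibers $f_u^{-1}(p)$, whereas the paper first shows that limit points of $E_u\setminus S$ lie in $S$ and then exhausts $E_u$ by finite sets $E_u^N$ — your version is a bit more direct but the underlying ideas are identical.
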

\noindent In the following we write $D^{\circ}_{k}(\bm{a},\delta):=\{(z_1,\dots,z_{k})\in\C^{k}: |z_j-a_j|<\delta,\forall j\}$ for the open polydisc of radius $\delta>0$, centered at $\bm{a}=(a_1,\dots,a_k)\in\C^k$. Further, for a set $S\subset \C^k$, we write $\clo(S)$ for the closure of $S$ in $\C^k$.
\begin{lemma}\label{EasyContinuation}
Let $F:\mathcal{U}\to\C$ be holomorphic on a connected open domain $\mathcal{U}\subset \C^k$ containing $\R^k$. Let $\bm{a}=(a_1,\dots,a_k)\in \R^k$ and $\delta>0$ be given, and let
\begin{equation*}
T=\{(a_1+iz_1,\dots,a_k+iz_k): z_j\in(-\delta,\delta),\,j=1,\dots, k\}.
\end{equation*}
Suppose that $D^{\circ}_{k}(\bm{a},\delta)\subset \mathcal{U}$, and $F(z)=0$, for all $z\in T$. Then $F= 0$ identically on $\mathcal{U}$.
\end{lemma}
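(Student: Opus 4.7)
The plan is to reduce the problem to the standard identity principle for holomorphic functions of several complex variables by first showing that $F$ vanishes on the entire polydisc $D^{\circ}_{k}(\bm{a},\delta)$, and then propagating this vanishing to all of $\mathcal{U}$ by connectedness.

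For the first step, I would exploit the fact that, since $F$ is holomorphic on $\mathcal{U}$ and $D^{\circ}_{k}(\bm{a},\delta)\subset \mathcal{U}$, it admits a convergent power series expansion
\begin{equation*}
F(z_1,\dots,z_k)=\sum_{\alpha\in\N_0^k} c_\alpha (z_1-a_1)^{\alpha_1}\cdots(z_k-a_k)^{\alpha_k}
\end{equation*}
valid on this polydisc. Evaluating on $T$, i.e., setting $z_j=a_j+iy_j$ with $y_j\in(-\delta,\delta)$, the assumption that $F$ vanishes on $T$ yields
\begin{equation*}
0=\sum_{\alpha\in\N_0^k} c_\alpha\, i^{|\alpha|}\, y_1^{\alpha_1}\cdots y_k^{\alpha_k},\quad\text{for all } (y_1,\dots,y_k)\in(-\delta,\delta)^k.
\end{equation*}
Since the series above is a convergent real power series in $(y_1,\dots,y_k)$ on $(-\delta,\delta)^k$ and vanishes on this open set, all its coefficients must vanish by repeatedly differentiating in each real variable $y_j$ at $\bm{0}$, or equivalently by uniqueness of power series representations. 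Hence $c_\alpha=0$ for every multi-index $\alpha$, so $F\equiv 0$ on the whole polydisc $D^{\circ}_{k}(\bm{a},\delta)$.

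Since $D^{\circ}_{k}(\bm{a},\delta)$ is a nonempty open subset of the connected open set $\mathcal{U}$, the identity theorem for holomorphic functions of several complex variables forces $F\equiv 0$ on $\mathcal{U}$. Concretely, the set $Z=\{z\in\mathcal{U}:F\text{ vanishes on a neighborhood of }z\}$ is open by definition, and closed in $\mathcal{U}$ because if $z\in\mathcal{U}$ is a limit point of $Z$, then the Taylor coefficients of $F$ at $z$ all vanish (they are limits of Taylor coefficients at nearby points in $Z$), so some neighborhood of $z$ lies in $Z$; thus $Z=\mathcal{U}$. This completes the proof.

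I do not anticipate a serious obstacle: the only point that warrants attention is the passage from vanishing on the totally real $k$-dimensional slice $T$ to vanishing on the full complex polydisc, which is dispatched cleanly by the power series computation above, since the factor $i^{|\alpha|}$ is nonzero and therefore transparent to the vanishing of the real-variable coefficients.
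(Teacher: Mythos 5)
Your proof is correct, but the route from the totally real slice $T$ to the full polydisc differs from the paper's. You expand $F$ in its power series at $\bm{a}$, substitute $z_j=a_j+iy_j$, and kill all the coefficients at once by uniqueness of (real) power series representations on the open cube $(-\delta,\delta)^k$; the factor $i^{|\alpha|}$ is indeed harmless since it has modulus one. The paper instead proceeds by induction over the coordinates: it defines intermediate sets $T_r$ in which the first $k-r$ coordinates remain purely imaginary and the last $r$ range over the full disc, and at each step freezes all but one variable and applies the \emph{one-variable} identity theorem to the resulting function of a single complex variable, which vanishes on the real interval $(-\delta,\delta)$. Both arguments then finish identically with the multivariate identity theorem on the connected set $\mathcal{U}$. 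Your approach is more direct and arguably more elementary in spirit, at the cost of invoking coefficient uniqueness for multivariate power series (which you justify adequately by term-by-term differentiation); the paper's coordinate-by-coordinate induction trades that for $k$ applications of the single-variable identity theorem and avoids manipulating the multivariate expansion explicitly. Both are complete and correct.
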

\begin{lemma}\label{TrickyContinuation}
Let $t^*\in\C$, $\bm{a}=(a_1,\dots,a_k)\in\R^k$, and $\delta>0$, and let $F:\mathcal{U}\to\C$ be holomorphic on a connected open domain $\mathcal{U}\subset \C^{1+k}$ containing $\{t^*\}\times\R^k$. Define the set
\begin{equation*}
T=\{(t^*,a_1+iz_1,\dots,a_k+iz_k): z_j\in(-\delta,\delta),\,j=1,\dots, k\},
\end{equation*}
and suppose that $D^{\circ}_{1+k}(\bm{a},\delta)\subset \mathcal{U}$. If there exists a set $\widetilde{T}\subset \C^{1+k}$ such that $\widetilde{T}\subset ( \C\setminus\{t^*\})\times \C^{ k}$, $\clo(\widetilde{T}) \supset T$, and $F\vert_{\widetilde{T}}\equiv 0$,  
then $F\vert_{\mathcal{U}}\equiv 0$.

\end{lemma}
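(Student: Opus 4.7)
The plan is to leverage the continuity of $F$ and the hypothesis $\clo(\widetilde{T})\supset T$ to reduce the problem to a vanishing statement for $F$ on the totally real set $T$, then bootstrap this to vanishing on the entire $t^*$-slice of the polydisc $D^{\circ}_{1+k}((t^*,\bm{a}),\delta)$ via the Taylor-series argument underlying Lemma \ref{EasyContinuation}, and finally iterate a division by $(t-t^*)$ to force $F$ to vanish to infinite order at $(t^*,\bm{a})$. Connectedness of $\mathcal{U}$ and the identity theorem then yield $F\equiv 0$ on $\mathcal{U}$.

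First I would observe that, since $F$ is continuous on $\mathcal{U}$, $F|_{\widetilde{T}}\equiv 0$, and $T\subset \clo(\widetilde{T})\cap\mathcal{U}$, we have $F|_T\equiv 0$. The slice function $g(\bm{w}):=F(t^*,\bm{w})$ is holomorphic on the open set $\mathcal{V}:=\{\bm{w}\in\C^k:(t^*,\bm{w})\in\mathcal{U}\}$, which contains $\R^k$ as well as $D^{\circ}_{k}(\bm{a},\delta)$; let $\mathcal{V}_0$ be the connected component of $\mathcal{V}$ containing $\R^k$, which also contains $D^{\circ}_{k}(\bm{a},\delta)$ because $\bm{a}\in\R^k\cap D^{\circ}_{k}(\bm{a},\delta)$. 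Applying Lemma \ref{EasyContinuation} to $g|_{\mathcal{V}_0}$ yields $g\equiv 0$ on $\mathcal{V}_0$, and in particular $F(t^*,\bm{w})=0$ for every $\bm{w}\in D^{\circ}_{k}(\bm{a},\delta)$.

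Next I would carry out an iterative division in the $t$-direction. Inspection of the Taylor expansion of $F$ about $(t^*,\bm{a})$ (equivalently, the Riemann removable singularity theorem) shows that vanishing of $F$ on the hypersurface $\{t=t^*\}\cap D^{\circ}_{1+k}((t^*,\bm{a}),\delta)$ implies that $H_1(t,\bm{w}):=F(t,\bm{w})/(t-t^*)$ extends holomorphically to all of $D^{\circ}_{1+k}((t^*,\bm{a}),\delta)$. On $\widetilde{T}$ we have $F=0$ and $t\neq t^*$, so $H_1$ vanishes on $\widetilde{T}\cap D^{\circ}_{1+k}((t^*,\bm{a}),\delta)$; by continuity and $\clo(\widetilde{T})\supset T$, then also $H_1|_T\equiv 0$. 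Replaying the slice argument of the previous paragraph for $H_1$, then for $H_2$, and so on, produces holomorphic functions $H_j$ on the polydisc with $F=(t-t^*)^jH_j$ for every $j\geq 1$, which forces every Taylor coefficient of $F$ at $(t^*,\bm{a})$ to vanish. Hence $F\equiv 0$ on $D^{\circ}_{1+k}((t^*,\bm{a}),\delta)$, and the identity theorem for several complex variables, applied on the connected domain $\mathcal{U}$, gives $F\equiv 0$ on $\mathcal{U}$. The main obstacle is really just the bookkeeping of the iteration, namely verifying at each stage that $H_j/(t-t^*)$ once more extends holomorphically across $\{t=t^*\}$; this is handled uniformly by the slice argument combined with the removable singularity theorem.
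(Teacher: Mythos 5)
Your proof is correct and follows essentially the same route as the paper's: both reduce to the claim that $F$ vanishes on the polydisc $D^{\circ}_{1+k}((t^*,\bm a),\delta)$ by repeated division by $(t-t^*)$, using continuity and $\clo(\widetilde{T})\supset T$ to pass the vanishing on $\widetilde{T}$ (where $t\neq t^*$) to vanishing on $T$ at each stage, and Lemma~\ref{EasyContinuation} applied to the $t=t^*$ slice to upgrade vanishing on $T$ to vanishing on the full slice of the polydisc. The only difference is bookkeeping: the paper phrases the iteration as a contradiction with a maximal $p$ for which $z_0^{-p}F$ is holomorphic near the center, whereas you carry out the division directly to kill all Taylor coefficients; these are the same argument, and you even share with the paper the harmless liberty of invoking Lemma~\ref{EasyContinuation} for slice functions defined only on the polydisc rather than on a domain containing all of $\R^k$ (the inductive part of that lemma's proof is all that is needed and it makes no use of $\R^k\subset\mathcal{U}$).
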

We will now elaborate on the tools needed in the proof of Theorem \ref{LITheoremIntro}. The material touches upon the theory of Lie groups and representation theory, and will be presented in a self-contained fashion, only assuming familiarity with finitely-generated abelian groups and basic point-set topology. We write $T^d=\R^d/\Z^d$ for the $d$-dimensional torus considered as a compact abelian topological group. For a finite set of real numbers $\{\alpha_j\}_{j\hspace{0.5mm}=\hspace{0.5mm}1}^d$ we let $\langle \alpha_1,\dots,\alpha_d\rangle_{\Q}$ denote the span of $\{\alpha_j\}_{j\hspace{0.5mm}=\hspace{0.5mm}1}^d$ in the vector space $\R$ over the scalar field $\Q$, and we write $\dim \langle \alpha_1,\dots,\alpha_d\rangle_{\Q}$ for its dimension. We will need the following lemma, which is an easy consequence of Kronecker's theorem \cite{Kronecker1884}. For the sake of completeness, we provide an elementary proof from first principles.
\begin{figure}[h!] 
\begin{minipage}{0.5\textwidth}
\includegraphics[height=60mm,angle=0]{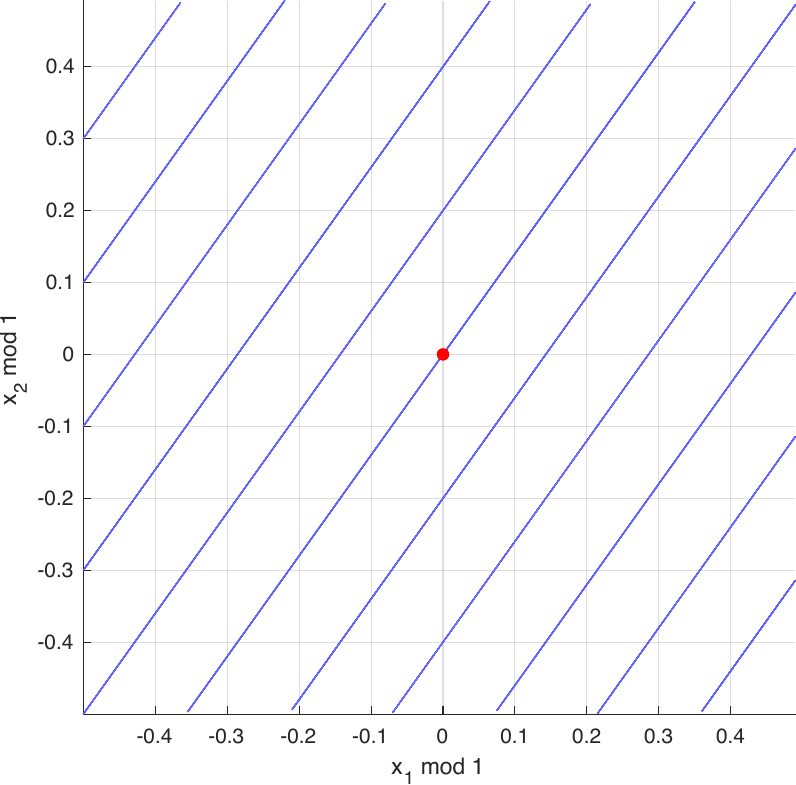}
\end{minipage}
\begin{minipage}{0.5\textwidth}
\includegraphics[height=60mm,angle=0]{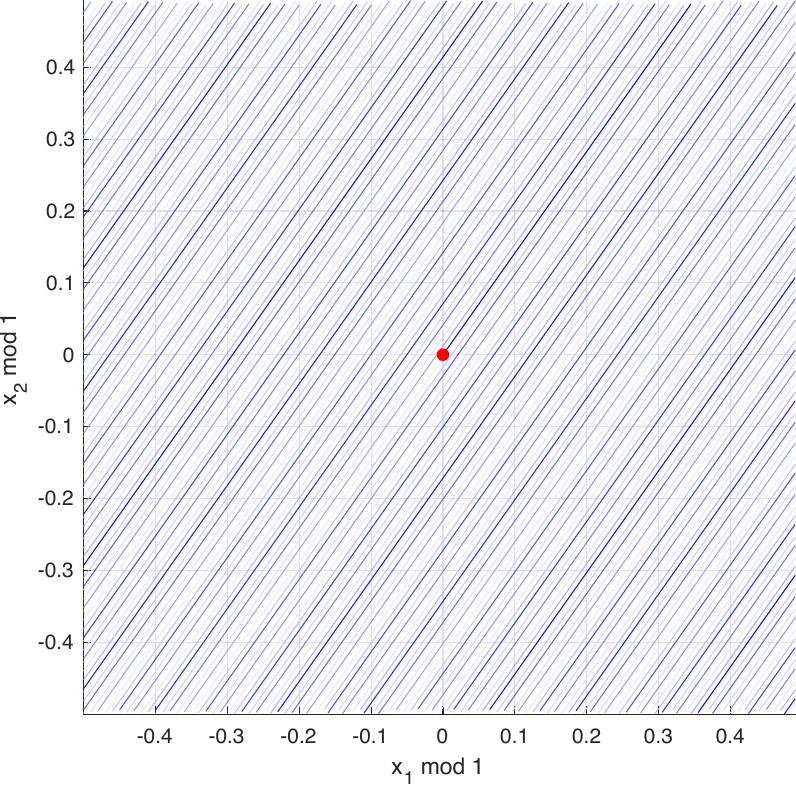}
\end{minipage}
\caption{The line $\ell: t\mapsto (\alpha_1t,\alpha_2t)+\Z^2$, $t\in\R$, depicted in the fundamental cell $[-\frac{1}{2},\frac{1}{2})\times [-\frac{1}{2},\frac{1}{2})$ of the torus $T^2=\{(x_1,x_2)+\Z^2:(x_1,x_2)\in\R^2\}$, with $(\alpha_1,\alpha_2)=(1,1.4)$ \textit{(left)}, and $(\alpha_1,\alpha_2)=(1,\sqrt{2})$ \textit{(right)}. \label{fig:TorusWinding2d}}
\end{figure}
\begin{lemma}[{\cite{Kronecker1884} Kronecker}]\label{TorusWindingLemma}
Let $d\in \N$ and let $\{\alpha_j\}_{j\hspace{0.5mm}=\hspace{0.5mm}1}^d$ be an arbitrary set of nonzero real numbers with $k=\dim\langle \alpha_1,\dots,\alpha_d\rangle_{\Q}$. Define the following subset of $T^d$:
\begin{equation*}
M=\clo\{(\alpha_1t,\alpha_2t,\dots,\alpha_dt) +\Z^d: t\in \R \},
\end{equation*}
where $\clo$ denotes the closure in $T^d$. Then $M$ is isomorphic to a $k$-dimensional torus as a Lie group, i.e., there exists a $\Psi:M\to \R^k/\Z^k$ that is both a homeomorphism (between $M$ and $\R^k/\Z^k$ as topological spaces) and a homomorphism (between $M$ and $\R^k/\Z^k$ as abelian groups).
\end{lemma}
\noindent When $d=2$, Lemma \ref{TorusWindingLemma} simply says that the line $\ell: t\mapsto (\alpha_1t,\alpha_2t)+\Z^2$, $t\in\R$, either exhibits discrete periodic behavior and is thus homeomorphic to a 1-dimensional torus, which is the case if $k=1$, i.e., $\alpha_1/\alpha_2$ is rational, or otherwise, if $k=2$, i.e., when $\alpha_1/\alpha_2$ is irrational, $\ell$ is dense in the whole square, and so its closure is a $2$-dimensional torus, namely $\R^2/\Z^2$ itself. This is illustrated in Figure \ref{fig:TorusWinding2d}. 
When $d\geq 3$, the situation can be more complicated, as illustrated in Figure \ref{fig:TorusWinding3d}. Specifically, the torus $M$ obtained as the closure of the line $\ell: t\mapsto (\alpha_1t,\dots,\alpha_dt)+\Z^d$, $t\in\R$, may not occupy the entirety of $\R^d/\Z^d$. 
In this case, Lemma \ref{TorusWindingLemma} provides the precise dimension of $M$, namely $k=\dim\langle \alpha_1,\dots,\alpha_d\rangle_{\Q}$. For the purpose of proving Theorem \ref{LITheoremIntro}, it will suffice to consider the behavior of $\ell$ in a neighborhood of the point $\bm{0}+\Z^d\in T^d$. Concretely, if $Q\in \Q^{d\times k}$ is the matrix representing $\alpha_1,\dots,\alpha_d$ in the basis $\{\alpha_1,\dots,\alpha_k\}$, the following lemma states that, in a neighborhood of $\bm{0}$, $\ell$ visits points arbitrarily close to the $k$-dimensional subspace of $\R^d$ spanned by the columns of $Q$.
\begin{figure}[h!] 
\begin{minipage}{0.5\textwidth}
\includegraphics[height=60mm,angle=0]{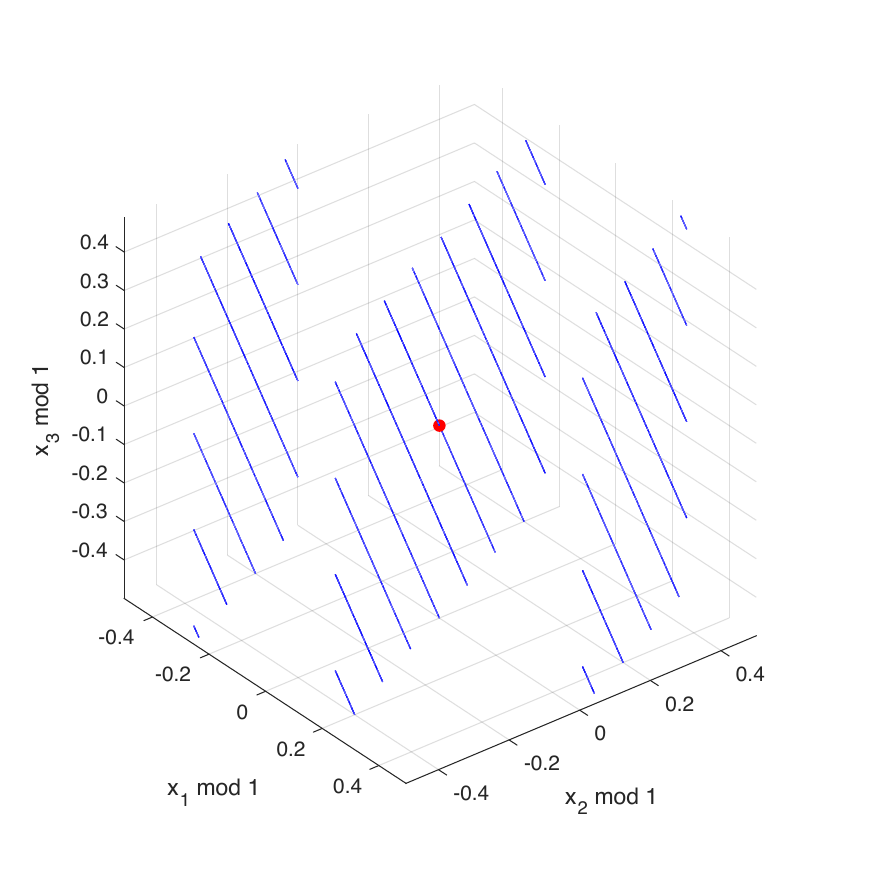}
\end{minipage}
\begin{minipage}{0.5\textwidth}
\includegraphics[height=60mm,angle=0]{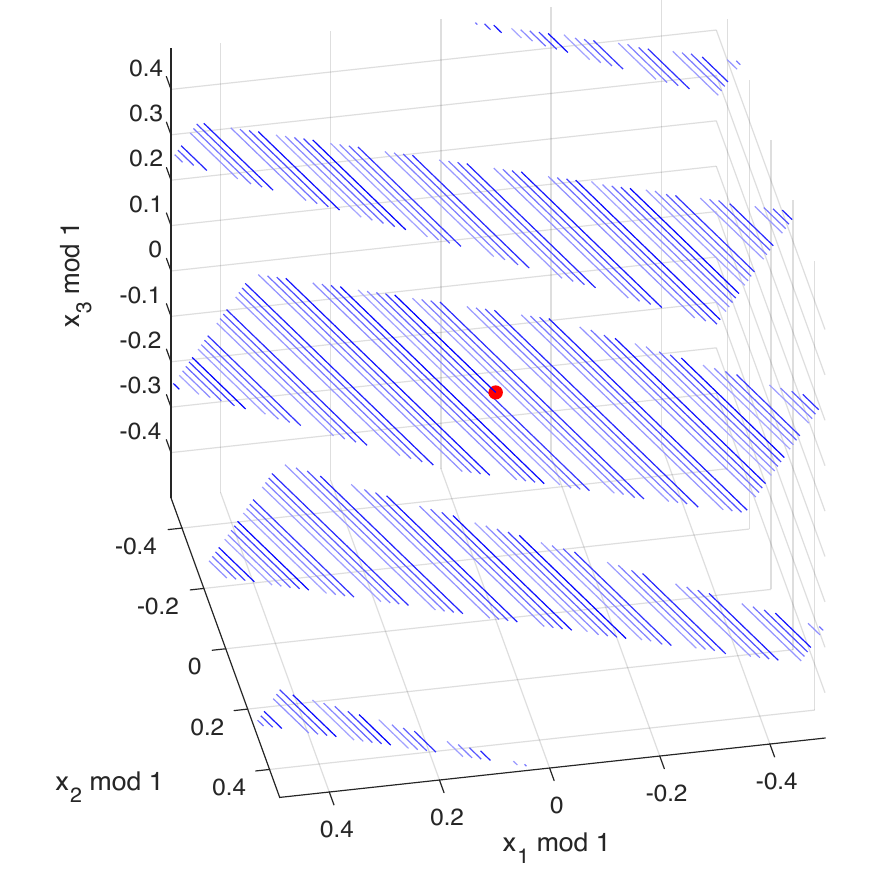}
\end{minipage}
\caption{The line $t\mapsto (\alpha_1t,\alpha_2t,\alpha_3t)$, $t\in\R$, depicted in the fundamental cell $[-\frac{1}{2},\frac{1}{2})^3$ of the torus $T^3=\{(x_1,x_2,x_3)+\Z^3:(x_1,x_2,x_3)\in\R^3\}$, with $(\alpha_1,\alpha_2,\alpha_3)=\left(\frac{2}{5},-\frac{4}{5},\frac{3}{2}\right)$ \textit{(left)}, and $(\alpha_1,\alpha_2,\alpha_3)=\left(1,\sqrt{2},\frac{1}{2}+\sqrt{2}\right)$ \textit{(right)}. Note that in a neighborhood of $\bm{0}$ (marked in red), $\ell$ is dense in a $k$-dimensional subspace of $\R^3$, with $k=\dim \left\langle \frac{2}{5},-\frac{4}{5},\frac{3}{2} \right\rangle_{\Q}=1$ \textit{(left)},  and $k=\dim \left\langle 1,\sqrt{2},\frac{1}{2}+\sqrt{2} \right\rangle_{\Q}=2$ \textit{(right)}. \label{fig:TorusWinding3d}}
\end{figure}
\begin{lemma}\label{MainTorusLemma}
Suppose that $\{\alpha_j\}_{j\hspace{0.5mm}=\hspace{0.5mm}1}^d$ are nonzero real numbers, and let $k=\dim \langle \alpha_1,\dots,\alpha_d\rangle_{\Q}$. Furthermore, assume that $\{\alpha_j\}_{j\hspace{0.5mm}=\hspace{0.5mm}1}^k$ is a basis for $\langle \alpha_1,\dots,\alpha_d\rangle_{\Q}$ over $\Q$, and let $Q=(Q_{pj})\in \Q^{d\times k}$ be the matrix such that $(\alpha_1,\dots,\alpha_d)= Q\cdot(\alpha_1,\dots,\alpha_k)$.  
Then there exists an open set $C\subset \R^k$ with  
$\bm{0} \in C$, such that, for every $\bm{s}=(s_1,\dots,s_k)\in C$, there are sequences $(t^{n,\bm{s}})_{n\in\N}\subset\R$ and $(\bm{r}^{n,\bm{s}})_{n\in\N} =(r_1^{n,\bm{s}},\dots, r_k^{n,\bm{s}})_{n\in\N}\subset C$ with the following properties:
\begin{enumerate}[(i)]
\item $(\alpha_1t^{n,\bm{s}},\alpha_2t^{n,\bm{s}},\dots,\alpha_dt^{n,\bm{s}})+\Z^d=Q\cdot (\alpha_1 r_1^{n,\bm{s}},\dots,\alpha_k r_k^{n,\bm{s}})+\Z^d$, for all $n\in\N$,
\item  $|t^{n,\bm{s}}|\to \infty$ as $n\to\infty$,
\item $\bm{r}^{n,\bm{s}}\to\bm{s}$ in $\R^k$, as $n\to\infty$.
 
\end{enumerate}
\end{lemma}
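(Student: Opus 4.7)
The plan is to reduce condition (i) to Kronecker's dense-orbit theorem on $T^k$ (which here follows from Lemma \ref{TorusWindingLemma}) via a common denominator trick that handles the rational matrix $Q$. Concretely, fix a positive integer $N$ such that $NQ_{pj}\in\Z$ for all $p\in\{1,\dots,d\}$ and $j\in\{1,\dots,k\}$; such an $N$ exists because $Q\in\Q^{d\times k}$ has only finitely many entries. Since $\{\alpha_1,\dots,\alpha_k\}$ is $\Q$-linearly independent by hypothesis, so is $\{\alpha_1/N,\dots,\alpha_k/N\}$. Applying Lemma \ref{TorusWindingLemma} to the tuple $(\alpha_1/N,\dots,\alpha_k/N)\in\R^k$, the closure of its orbit in $T^k$ is a $k$-dimensional subtorus and therefore equals $T^k$; equivalently, the continuous homomorphism $\psi:\R\to T^k$, $t\mapsto(\alpha_1 t/N,\dots,\alpha_k t/N)+\Z^k$, has dense image.

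I take $C=(-\tfrac{1}{2},\tfrac{1}{2})^k$. Given $\bm s=(s_1,\dots,s_k)\in C$, density of $\psi(\R)$ yields a sequence $(t^{n,\bm s})_{n\in\N}\subset\R$ and integers $m_j^{n,\bm s}$ with $\alpha_j t^{n,\bm s}/N-m_j^{n,\bm s}\to \alpha_j s_j/N$ as $n\to\infty$ for each $j\in\{1,\dots,k\}$; the additional requirement $|t^{n,\bm s}|\to\infty$ is secured by adding a large multiple of an approximate period of $\psi$ (i.e., some $\tau$ of large modulus with $\psi(\tau)$ close to $\bm 0+\Z^k$, furnished by density) to each $t^{n,\bm s}$. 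Setting $n_j^{n,\bm s}:=Nm_j^{n,\bm s}$ and $r_j^{n,\bm s}:=t^{n,\bm s}-n_j^{n,\bm s}/\alpha_j$, I obtain $\alpha_j r_j^{n,\bm s}=\alpha_j t^{n,\bm s}-Nm_j^{n,\bm s}\to\alpha_j s_j$, hence $r_j^{n,\bm s}\to s_j$, which is (iii). Property (ii) holds by construction. For (i), using $\alpha_p=\sum_{j=1}^k Q_{pj}\alpha_j$ for every $p\in\{1,\dots,d\}$ (the upper $k\times k$ block of $Q$ being the identity),
\begin{equation*}
\alpha_p t^{n,\bm s}-\sum_{j=1}^k Q_{pj}\alpha_j r_j^{n,\bm s}=\sum_{j=1}^k Q_{pj}\alpha_j(t^{n,\bm s}-r_j^{n,\bm s})=\sum_{j=1}^k (NQ_{pj})\,m_j^{n,\bm s}\in\Z,
\end{equation*}
which is exactly the equality $(\alpha_1 t^{n,\bm s},\dots,\alpha_d t^{n,\bm s})+\Z^d=Q\cdot(\alpha_1 r_1^{n,\bm s},\dots,\alpha_k r_k^{n,\bm s})+\Z^d$.

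The main obstacle is the need to balance (i) against (iii). Naive density on $T^k$ (the case $N=1$) gives $n_j^{n,\bm s}\in\Z$ but does not force the rational combinations $\sum_j Q_{pj}n_j^{n,\bm s}$ into $\Z$ for the rows $p>k$; conversely, restricting to a sublattice of $\Z^k$ sparse enough to hit those $p>k$ constraints could destroy density of the rescaled orbit in $T^k$ and thereby violate (iii). Choosing $N$ to be a common denominator of the entries of $Q$ is the sharp remedy: on the one hand it ensures that all the $p>k$ constraints are automatic, and on the other hand it preserves $\Q$-linear independence of the rescaled frequencies $\alpha_j/N$, hence density of $\psi(\R)$ in $T^k$. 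The only remaining bookkeeping is to ensure $\bm r^{n,\bm s}\in C$ for every $n$ (not just eventually); since $C$ is open and $\bm r^{n,\bm s}\to\bm s\in C$, this is achieved by discarding a finite prefix of the sequence depending on $\bm s$.
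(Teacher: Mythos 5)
Your argument is correct, but it reaches the conclusion by a genuinely different route than the paper. The paper works inside $T^d$: it forms the subtorus $M'=\{Q\cdot\bm{u}+\Z^d:\bm{u}\in\R^k\}$, identifies it with $\R^k/K$ for the lattice $K=\ker\Phi$, takes $C$ to be a fundamental domain of $K$ rescaled coordinatewise by $1/\alpha_j$, proves $\clo(M_R)=M'$ for the truncated orbit $M_R=\{(\alpha_1t,\dots,\alpha_dt)+\Z^d:|t|>R\}$ (using Lemma~\ref{TorusWindingLemma} together with a case split $k=1$ versus $k\geq 2$), and then pulls the convergence back through the isomorphism $\widetilde{\Phi}$. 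You instead clear the denominators of $Q$: rescaling the frequencies by $1/N$ turns the congruence constraints coming from the rows $p>k$ of $Q$ into automatic integrality, so the entire problem collapses to Kronecker density of the line $t\mapsto(\alpha_1t/N,\dots,\alpha_kt/N)+\Z^k$ in $T^k$ (Lemma~\ref{TorusWindingLemma} with $d=k$), and property (i) reduces to a two-line integer computation. Your version is shorter, dispenses with the lattice and fundamental-domain bookkeeping, and produces the explicit set $C=(-\tfrac12,\tfrac12)^k$; the paper's version is more structural in that it exhibits $M'$ and its lattice concretely, which is the picture behind Figures~\ref{fig:TorusWinding2d} and~\ref{fig:TorusWinding3d}. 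The one step you state too loosely is that approximate periods $\tau$ of arbitrarily large modulus are ``furnished by density'': density of $\psi(\R)$ by itself does not force $|\tau|$ to be large ($\tau=0$ already hits $\bm{0}$ exactly). The fact is nonetheless true and easy to supply: by compactness of $T^k$, choose integers $n_1<n_2<\cdots$ with $n_{i+1}>2n_i$ such that $\psi(n_i)$ converges; then $\tau_i\coleqq n_{i+1}-n_i>n_i\to\infty$ while $\psi(\tau_i)=\psi(n_{i+1})-\psi(n_i)\to\bm{0}$. (This recurrence argument plays the role of the paper's claim $\clo(M_R)=M'$, and it handles $k=1$ and $k\geq2$ uniformly, whereas the paper needs separate cases.) With that justification inserted, your proof is complete.
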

\section{Imaginary period and the self-avoiding property}

We say that a holomorphic function $f:\dom\to \C$ is $i$-periodic if $f(z+i)=f(z)$, for all $z\in\dom$. An example of such a function is the scaled hyperbolic tangent function $\tanh(\pi \,\cdot)$. More generally, for an arbitrary discrete set $S\subset\R$, and arbitrary $C\in\R$ and real sequence $\{c_s\}_{s\in S}\in\ell^1(S)$, the function $\sigma=C+\sum_{s\in S}c_s\tanh(\pi(\,\cdot - s))$ is also $i$-periodic, and in particular, the set of its poles $P$ has the structure $P=\bigcup_{n\in \Z}\left(S+\left(n+\frac{1}{2}\right) i\right)$.  
We now introduce a property defined for discrete subsets of $\R$, which will, when applied to the set $S$,  
be the final technical ingredient in the proof of our main results.
\begin{definition}[Self-avoiding set]
Let $S\subset \R$ be a discrete set. We say that $S$ is self-avoiding if, for every finite collection of distinct pairs $\{(\omega_j,\theta_j)\}_{j\hspace{0.5mm}=\hspace{0.5mm}1}^m\subset (2\Z+1)\times \R$, there exist a $j^*\in\{1,\dots, m\}$ and a $t^*$ such that
\begin{equation*}
t^*\in \frac{S-\theta_{j^*}}{\omega_{j^*}} \biggm\backslash\bigcup_{j\neq j^*} \frac{S-\theta_{j}}{\omega_{j}}.
\end{equation*}
\end{definition}
\begin{remark}
In other words, a set $S$ is self-avoiding if the union of a finite number of distinct copies of $S$ obtained by translating and scaling by an odd integer contains a real number which is an element of exactly one of the copies.
\end{remark} 
 
\begin{prop}\label{SAgeneralprop}
Let $S=\{s_k:k\in\Z\}$, $s_k-s_{k-1}>0$, $\forall k\in\Z\,$, be an infinite discrete set such that $\{s_k-s_{k-1}:k\in\Z\}$ is rationally independent. Then $S$ is self-avoiding.
\end{prop}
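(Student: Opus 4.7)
The plan is to argue by contradiction. Suppose $S$ is not self-avoiding, so there exist pairwise distinct pairs $\{(\omega_j,\theta_j)\}_{j=1}^m \subset (2\Z+1)\times\R$ such that for every $j^*$ and every $t\in (S-\theta_{j^*})/\omega_{j^*}$ there is some $j'\neq j^*$ with $t\in (S-\theta_{j'})/\omega_{j'}$. Specializing to $j^*=1$, for each $k\in\Z$ there exist $j'(k)\in\{2,\dots,m\}$ and $k'(k)\in\Z$ such that
\begin{equation*}
\omega_{j'(k)}(s_k-\theta_1)=\omega_1(s_{k'(k)}-\theta_{j'(k)}).
\end{equation*}
Since $j'(k)$ takes only finitely many values while $k$ ranges over $\Z$, the pigeonhole principle yields some $j'_0\in\{2,\dots,m\}$ and an infinite set $A\subset\Z$ on which $j'(k)\equiv j'_0$. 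On $A$, the equation simplifies to $\omega_{j'_0}s_k-\omega_1 s_{k'(k)}=C$, with $C := \omega_{j'_0}\theta_1-\omega_1\theta_{j'_0}$.

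The crux is to exploit the rational independence of the gaps $d_i:=s_i-s_{i-1}$. For any $k_1,k_2\in A$, subtracting two instances of the equation gives
\begin{equation*}
s_{k'(k_1)}-s_{k'(k_2)}=r\,(s_{k_1}-s_{k_2}), \qquad \text{with } r:=\omega_{j'_0}/\omega_1\in\Q\setminus\{0\}.
\end{equation*}
Each difference $s_a-s_b$ is a signed sum of consecutive $d_i$'s, so both sides are finite $\Q$-linear combinations of the $d_i$'s. By rational independence, the coefficient of each $d_i$ must coincide on the two sides. Since the left-hand coefficients lie in $\{-1,0,+1\}$ while the right-hand ones lie in $\{-r,0,r\}$, and at least one right-hand coefficient is nonzero when $k_1\neq k_2$, we conclude $r=\pm 1$ and, moreover, that the corresponding nonzero-coefficient index sets agree.

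It remains to rule out both signs. If $r=+1$ (so $\omega_{j'_0}=\omega_1$), fixing $k_0\in A$ and applying the index-set coincidence $\{k'(k_0)+1,\dots,k'(k)\}=\{k_0+1,\dots,k\}$ (or the reverse, if $k<k_0$) for every other $k\in A$ forces $k'(k)=k$ on all of $A$. Substituting back yields $C=0$, whence $\theta_{j'_0}=\theta_1$, so $(\omega_{j'_0},\theta_{j'_0})=(\omega_1,\theta_1)$, contradicting distinctness. If $r=-1$ (so $\omega_{j'_0}=-\omega_1$), the index-set coincidence for any $k_1<k_2$ in $A$ forces $k'(k_1)=k_2$ and $k'(k_2)=k_1$; picking a third element $k_3\in A$ with $k_1<k_3$ and applying the same argument to the pair $(k_1,k_3)$ would yield $k'(k_1)=k_3\neq k_2$, a contradiction. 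This exhausts the possibilities.

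The main technical delicacy lies in the coefficient-matching step: one has to track the sign of $s_{k'(k_1)}-s_{k'(k_2)}$ according to the order of $k'(k_1)$ and $k'(k_2)$, rule out the degenerate possibility $k'(k_1)=k'(k_2)$ (which would spuriously force $r=0$), and verify that rational independence genuinely forces equality of index sets, not merely equality of sums. Once this bookkeeping is in place, the rigidity of the $d_i$'s drives the two-case analysis to an immediate contradiction.
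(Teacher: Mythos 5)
Your proof is correct and takes essentially the same approach as the paper: both begin with a pigeonhole argument to isolate a second pair sharing infinitely (or at least three) common points with the first, then exploit the rational independence of the gaps $s_k-s_{k-1}$ to force the index sets in the resulting sums of gaps to coincide, yielding $(\omega_1,\theta_1)=(\omega_{j'_0},\theta_{j'_0})$, a contradiction. The bookkeeping differs slightly — the paper fixes three common points and compares two difference equations, whereas you split into the cases $r=\pm 1$ and invoke a third element only when $r=-1$ — but the core mechanism is identical.
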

\begin{proof}
We use the shorthand notation $S_{\omega,\theta}=\frac{S-\theta}{\omega}$.
 Suppose by way of contradiction that $A\subset (2\Z+1)\times \R$, $\#A\geq 2$, is a set of pairs such that, for every $(\omega,\theta)\in A$ and every $t\in S_{\omega,\theta}$, there exists a pair $(\omega',\theta')\in A\setminus\{(\omega,\theta)\}$ such that $t\in S_{\omega',\theta'}$. Fix a pair $(\omega_1,\theta_1)\in A$. We then have, by assumption,
\begin{equation*}
S_{\omega_1,\theta_1}=\bigcup_{(\omega',\,\theta')\in A\setminus\{(\omega_1,\theta_1)\}}S_{\omega_1,\,\theta_1}\cap S_{\omega',\theta'}.
\end{equation*}
Since $S$ is infinite, there exists a $(\omega_2,\theta_2)\in A\setminus\{(\omega_1,\theta_1)\}$ such that $\#(S_{\omega_1,\theta_1}\cap S_{\omega_2,\theta_2})\geq 3$. Pick an arbitrary subset $\{t_1<t_2<t_3\}\subset S_{\omega_1,\theta_1}\cap S_{\omega_2,\theta_2}$ and note that there exist $k_1^1,k_2^1,k_3^1\in\Z$ and $k_1^2,k_2^2,k_3^2\in\Z$ such that
\begin{equation}\label{SelfAS-1st-eq}
t_j=\frac{s_{k_j^1}-\theta_1}{\omega_1}=\frac{s_{k_j^2}-\theta_2}{\omega_2},\quad\text{for }j=1,2,3.
\end{equation}
Moreover, for $r=1,2$, we have $k_1^r<k_2^r<k_3^r$ if $\omega_r>0$ and $k_1^r>k_2^r>k_3^r$ if $\omega_r<0$. Define the index sets
\begin{equation*}
K_j^r=\begin{cases}
\{k_j^r+1,k_j^r+2,\dots,k_{j+1}^r\}, &\text{if }\omega_r>0\\
\{k_{j+1}^r+1,k_{j+1}^r+2,\dots,k_{j}^r\}, &\text{if }\omega_r<0
\end{cases},\quad\text{for }j=1,2,\;r=1,2.
\end{equation*}
For brevity write $a_k=s_k-s_{k-1}$, $\forall k\in\Z$.  We  then have
\begin{equation}\label{SelfAS-2nd-eq}
\left(t_2-t_1,t_3-t_2\right)=\left(\frac{1}{|\omega_{1}|}\sum_{k\in K_1^1}{a_k},\frac{1}{|\omega_{1}|}\sum_{k\in K_2^1}{a_k}\right)=\left(\frac{1}{|\omega_{2}|}\sum_{k\in K_1^2}{a_k},\frac{1}{|\omega_{2}|}\sum_{k\in K_2^2}{a_k}\right).
\end{equation}
Now, since $\{a_k:k\in\Z\}$ is rationally independent and $|\omega_1|,|\omega_2|\in\Z$, \eqref{SelfAS-2nd-eq} implies $|\omega_{1}|=|\omega_{2}|$ and $K_j^1=K_j^2$, for $j=1,2$. In particular, $K_j^1=K_j^2$, for $j=1,2$, implies $\sgn (\omega_{1})=\sgn (\omega_{2})$, so we have $\omega_1=\omega_2$. Then, from the definition of $K_j^r$, it follows that $k_j^1=k_j^2$, for $j=1,2,3$. We thus obtain from \eqref{SelfAS-1st-eq} that $\theta_1=\theta_2$, contradicting $(\omega_1,\theta_1)\neq (\omega_2,\theta_2)$. Therefore, our initial assumption was false, so we deduce that $S$ must be self-avoiding.
\end{proof}
The following proposition formalizes the notion that nonlinearities $\sigma$ of the form considered at the beginning of the chapter are dense in the set of sigmoidal nonlinearities, even after imposing the additional constraint that $S$ be self-avoiding. 
\begin{prop}\label{ApproxProp}
Let $\rho$ be a piecewise $C^1$ nonlinearity with $\rho'\in BV(\R)\cap L^{1}(\R)$. Then, for every $\epsilon>0$, there exist a discrete self-avoiding set $S\subset\R$, a sequence $\{c_s\}_{s\in S}\in \ell^1(S)$ with $c_s\neq 0$, for all $s\in S$, and real numbers $\alpha>0$ and $C$, such that the function $\sigma$ given by
\begin{equation*}
\sigma=C+\sum_{s\in S}c_s \tanh(\alpha(\cdot-s))
\end{equation*}
satisfies $\|\sigma-\rho\|_{L^{\infty}(\R)}<\epsilon$.
\end{prop}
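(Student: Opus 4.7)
\emph{Strategy.} The plan is to build $\sigma$ in three stages: (i) approximate $\rho$ by a finite-jump step function $\sigma_0 = C + \sum_k c_k\,\sgn(\cdot - s_k)$ on a fine grid inside a large interval $[-M, M]$; (ii) extend the grid to a bi-infinite set $S = \{s_k\}_{k \in \Z}$ whose spacings $\{s_k - s_{k-1}\}_{k \in \Z}$ are rationally independent, by appending far-away points with small non-zero auxiliary coefficients; (iii) replace each $\sgn(\cdot - s_k)$ by $\tanh(\alpha(\cdot - s_k))$ for $\alpha$ sufficiently large. Once (i)--(iii) are realized with control on the approximation error, the self-avoiding property of $S$ follows immediately from Proposition \ref{SAgeneralprop}.

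\emph{Execution.} Since $\rho' \in L^1(\R)$, $\rho$ has finite limits $\rho(\pm\infty)$; being piecewise $C^1$ and necessarily continuous (else no continuous $\sigma$ can uniformly approximate it), $\rho$ is uniformly continuous on $\R$. I choose $M$ with $\int_{|y| > M} |\rho'(y)|\,dy < \epsilon/10$ and a mesh fine enough that $|\rho(s_k) - \rho(s_{k-1})| < \epsilon/10$ on a finite grid slightly wider than $[-M, M]$; setting $c_k := \tfrac{1}{2}(\rho(s_k) - \rho(s_{k-1}))$ with a matching constant $C$, a telescoping computation yields $\|\sigma_0 - \rho\|_{L^\infty(\R)} < \epsilon/5$. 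I then append further grid points outside $[-M, M]$ with non-zero coefficients of absolute-value sum at most $\epsilon/10$, and slightly perturb every grid position so that the resulting full sequence of spacings becomes rationally independent; both operations together can be arranged to increase the approximation error by at most $\epsilon/5$. The rational-independence perturbation exists by an inductive countable-avoidance argument: having already chosen rationally independent $a_{k_1}, \ldots, a_{k_n}$, the next spacing must only avoid the countable $\Q$-span of $\{a_{k_1}, \ldots, a_{k_n}\}$, so it can be taken arbitrarily close to any prescribed value. Finally, using the elementary bound $|\tanh(\alpha y) - \sgn(y)| \leq 2 e^{-2\alpha |y|}$ and splitting $\sigma - \sigma_0$ into a nearest-neighbor term (bounded by $2\max_k |c_k|$) plus a remainder (bounded by $2\|c\|_{\ell^1} e^{-2\alpha \delta}$, where $\delta$ is half the minimum spacing of $S$), I pick $\alpha$ large enough that $\|\sigma - \sigma_0\|_{L^\infty(\R)} < \epsilon/2$. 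Summing all contributions gives $\|\sigma - \rho\|_{L^\infty(\R)} < \epsilon$.

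\emph{Main obstacle.} The delicate point is satisfying simultaneously (a) $c_s \neq 0$ for every $s \in S$, (b) rational independence of the infinite spacing sequence, and (c) uniform $\epsilon$-approximation. Condition (a) fails generically on flat regions of $\rho$ (e.g., outside $[0, 1]$ for the clipped ReLU, where the Riemann-sum formula vanishes); these regions are instead populated by the non-vanishing auxiliary coefficients from step (ii), kept small enough to fit within the $\epsilon$-budget. Isolated accidental zeros of $c_k$ on the main grid, arising from coincidental symmetries of $\rho$, are eliminated by slight position perturbations. Condition (b) is arranged \emph{after} (c) is already comfortably within tolerance, so the final infinitesimal perturbation preserves both the approximation error bound and the non-vanishing of all $c_k$; with these in place, Proposition \ref{SAgeneralprop} delivers the self-avoiding property and completes the proof.
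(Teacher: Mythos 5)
Your proposal is correct and reaches the conclusion by a genuinely different route from the paper. The paper starts from the identity $\rho=\rho(-\infty)+\int_\R\rho'(y)H(\cdot-y)\,\dd y$, first replaces the Heaviside step $H$ by $h_\alpha=\frac{1}{2}(1+\tanh(\alpha\,\cdot))$ (error controlled by $\|\rho'\|_{L^\infty(\R)}\|H-h_\alpha\|_{L^1(\R)}$), and only then discretizes the resulting convolution by Riemann sums over the explicit grid $S_\beta=\{\beta(k+\pi^{-b(k)}):k\in\Z\}$, whose self-avoidance comes for free from the transcendence of $\pi$; the hypothesis $\rho'\in BV(\R)$ is used precisely to bound the Riemann-sum error uniformly in $x$, and vanishing coefficients are repaired by the small perturbations $d_s$. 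You reverse the order of smoothing and discretization: a telescoping step-function approximation first (which needs only uniform continuity of $\rho$ and $\rho'\in L^1(\R)$), then rational independence of the spacings by a countable-avoidance perturbation, then $\sgn\to\tanh$ via the exponential tail bound together with the correct observation that the nearest-neighbour error does not vanish as $\alpha\to\infty$ but is already $O(\epsilon)$ because each individual coefficient is. Your route buys two things: it never actually invokes $\rho'\in BV(\R)$, so it proves a marginally stronger statement, and the countable-avoidance construction of $S$ is more flexible than the explicit $\pi^{-b(k)}$ grid; the paper's route buys a closed-form $S$ and a single clean uniform error estimate. One point you should tighten: for a flat region of $\rho$ in the interior of $[-M,M]$ neither of your stated remedies literally applies, since position perturbation cannot make $\rho(s_k)-\rho(s_{k-1})$ nonzero there and the auxiliary coefficients of step (ii) live outside $[-M,M]$; you need to say explicitly that interior grid points with $c_k=0$ either receive a small nonzero coefficient within the $\epsilon$-budget (the paper's $d_s$ device) or are deleted from $S$ altogether, deletion being harmless because the resulting spacings are disjoint sums of the old ones and hence remain rationally independent.
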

\begin{proof}
First note that 
\begin{equation*}
\rho(-\infty)=\lim_{x\to-\infty}\rho(x)=\rho(0)-\int_{-\infty}^{0}\rho'(y)\mathrm{d}y
\end{equation*}
 is a well-defined real number, as $\rho'\in L^{1}(\R)$. Let $H$ denote the Heaviside step function. We now have, for all $x\in\R$,
\begin{equation*}
\rho(x)=\rho(-\infty)+\int_\R \rho'(y)H(x-y)\mathrm{d}y.
\end{equation*}
Denote $h_\alpha=\frac{1}{2}\left(1+\tanh(\alpha\,\cdot\,)\right)$ and consider the function $\rho_\alpha$ defined by
\begin{equation}\label{eq:appHeav}
\rho_{\alpha}(x)=\rho(-\infty)+\int_{\R}\rho'(y)h_{\alpha}(x-y)\mathrm{d}y,\quad x\in\R.
\end{equation}
 We then have
\begin{equation*}
\begin{aligned}
\sup_{x\in\R} \left|\rho(x)-\rho_{\alpha}(x)\right|&=\sup_{x\in\R} \left|\int_\R \rho'(y)\left[H(x-y)-h_\alpha(x-y)\right]\mathrm{d}y\right|\\
&=\sup_{x\in\R}\left|\int_\R \rho'(x-y)\left[H(y)-h_\alpha(y)\right]\mathrm{d}y\right|\\
&\leq \|\rho'\|_{L^{\infty}(\R)}\|H-h_\alpha\|_{L^{1}(\R)}.
\end{aligned}
\end{equation*}
Now note that $\|\rho'\|_{L^{\infty}(\R)}<\infty$ as $\rho'\in BV(\R)$, and $\|H-h_\alpha\|_{L^{1}(\R)}\to 0$ as $\alpha \to \infty$ by dominated convergence, so there exists $\alpha>0$ such that $\|\rho-\rho_\alpha\|_{L^{\infty}(\R)}<\frac{\epsilon}{3}$.
Let $b:\Z\to\N$ be a bijection, and $\beta\in(0,1)$ a parameter to be specified. Define the infinite discrete set $S_\beta=\{s_k^\beta:=\beta(k+\pi^{-b(k)}):k\in\Z\}\subset \R$. Then, since $\pi$ is transcendental, Proposition \ref{SAgeneralprop} implies that $S_\beta$ is self-avoiding. Now, since $\rho'$ is integrable on $\R$ and piecewise continuous, and $h_\alpha$ is bounded and continuous, we have that $\rho'\cdot h_\alpha(x-\cdot)$ is integrable on $\R$ and piecewise continuous. Hence, as $\mesh(S_\beta):=\sup_{k\in\Z}|s_{k}^\beta-s_{k-1}^{\beta}|\to 0$ for $\beta\to 0$, we have the following convergence of Riemann sums
\begin{equation*}
\sum_{k\in\Z}(s_k^\beta-s_{k-1}^\beta)\rho'(s_k^\beta)h_\alpha(x-s_k^\beta)\to\int_{\R}\rho'(y)h_{\alpha}(x-y)\mathrm{d}y\quad\text{as }\beta\to 0, \quad\text{for all }x\in\R.
\end{equation*}
Therefore $\rho(-\infty)+\sum_{k\in\Z} (s_k^\beta-s_{k-1}^\beta)\rho'(s_k^\beta)h_\alpha(\cdot-s_k^\beta)\to\rho_\alpha$ pointwise. To upgrade this to convergence in $\|\cdot\|_{L^{\infty}(\R)}$, we proceed as follows.
By the mean value theorem, for any $x\in\R$ and $\beta>0$, there exist $y_k^{\beta,x}\in[s_{k-1}^\beta,s_{k}^\beta]$ such that 
\begin{equation*}
\int_{s_{k-1}^\beta}^{s_{k}^\beta}\rho'(y)h_{\alpha}(x-y)\mathrm{d}y=(s_{k}^{\beta}-s_{k-1}^{\beta})\rho'(y_k^{\beta,x})h_\alpha(x-y_k^{\beta,x}).
\end{equation*}
We can therefore write
\begin{align}
&\sup_{x\in\R} \left|\sum_{k\in\Z}(s_k^\beta-s_{k-1}^\beta)\rho'(s_k^\beta)h_\alpha(x-s_k^\beta)-\int_{\R}\rho'(y)h_{\alpha}(x-y)\mathrm{d}y\right|\label{eq:appLongCalc}\\
=&\sup_{x\in\R} \left|\sum_{k\in\Z}(s_k^\beta-s_{k-1}^\beta)\left[\rho'(s_k^\beta)h_\alpha(x-s_k^\beta)-\rho'(y_k^{\beta,x})h_\alpha(x-y_k^{\beta,x})\right]\right|\notag\\
\leq &\,\mesh(S_\beta)\cdot \sup_{x\in\R} \sum_{k\in\Z}\left|\rho'(s_k^\beta)h_\alpha(x-s_k^\beta)-\rho'(y_k^{\beta,x})h_\alpha(x-y_k^{\beta,x})\right|\notag\\
\leq &\, \mesh(S_\beta)\cdot \sup_{x\in\R} \|\rho'\cdot h_\alpha(x-\cdot)\|_{BV(\R)}\notag\\
\leq &\, \mesh(S_\beta) \left( \|\rho'\|_{L^\infty(\R)}\|h_\alpha\|_{BV(\R)} +\|h_\alpha\|_{L^\infty(\R)}\|\rho'\|_{BV(\R)}\right).\notag
\end{align}
Since $\rho'\in BV(\R)$ by assumption, and $h_\alpha\in BV(\R)$ by definition, the quantities in the parentheses are all finite. As they are moreover independent of $\beta$, and $\mesh(S_\beta)\to 0$ for $\beta\to 0$, we can pick a $\beta>0$ such that
\begin{equation}\label{eq:appEst1}
 \left\|\sum_{k\in\Z}(s_k^\beta-s_{k-1}^\beta)\rho'(s_k^\beta)h_\alpha(\cdot-s_k^\beta)-\rho_\alpha +\rho(-\infty) \right\|_{L^{\infty}(\R)}<\frac{\epsilon}{3},
\end{equation}
where we used \eqref{eq:appHeav} to replace $\int_{\R}\rho'(y)h_{\alpha}(x-y)\mathrm{d}y$ in \eqref{eq:appLongCalc} with $\rho_\alpha -\rho(-\infty)$.
Finally, let $\{d_s\}_{s\in S_\beta}$ be an arbitrary sequence of real numbers such that $\mesh (S_\beta)\sum_{k\in\Z} |d_{s_k^\beta}|<\frac{\epsilon}{3}$ and, for each $s\in S_\beta$, $d_s=0$ if and only if $\rho'(s)\neq 0$. We then have
\begin{equation}\label{eq:appEst2}
\left\|\sum_{k\in\Z}(s_k^\beta-s_{k-1}^\beta)d_{s_k^\beta}h_\alpha(\cdot-s_k^\beta)\right\|_{L^\infty(\R)}\leq \mesh (S_\beta)\sum_{k\in\Z} |d_{s_k^\beta}|\,\cdot\,\|h_\alpha\|_{L^\infty(\R)}<\frac{\epsilon}{3}.
\end{equation}
Now, combining the estimates \eqref{eq:appEst1}, \eqref{eq:appEst2}, and $\|\rho-\rho_\alpha\|_{L^{\infty}(\R)}<\frac{\epsilon}{3}$ yields
\begin{equation*}
\left\|\rho(-\infty)+\sum_{k\in\Z}(s_k^\beta-s_{k-1}^\beta)(\rho'(s_k^\beta)+d_{s_k^\beta})h_\alpha(\cdot-s_k^\beta)-\rho \;\right\|_{L^{\infty}(\R)}<\epsilon,
\end{equation*}
so the claim of the proposition holds with $S=S_\beta$, $c_{s_k^{\beta}}=\frac{1}{2}(s_k^{\beta}-s_{k-1}^{\beta})(\rho'(s_k^{\beta})+d_{s_k^{\beta}})$, and $C=\rho(-\infty)+\sum_{k\in\Z}c_{s_k^{\beta}}$.
\end{proof}
\section{The main theorems}
\begin{theorem}\label{MainUniqCor}
Let $\mathcal{N}_1$ and $\mathcal{N}_2$ be non-degenerate clones-free LFNNs with the same input and ouput sets $V_{in}$ and $V_{out}$. Let 
\begin{equation*}
\sigma=C+\sum_{s\in S} c_s\tanh(\pi(\,\cdot-s)),
\end{equation*}
where $C\in\R$, $S$ is a discrete self-avoiding set, and $\{c_s\}_{s\in S}\in\ell^1(S)$ are all nonzero and real. Suppose that $\outmap{\mathcal{N}_1}{\sigma}\!(\bm{t})=\outmap{\mathcal{N}_2}{\sigma}\!(\bm{t})$, for all $\bm{t}\in\R^{V_{in}}$. Then $\mathcal{N}_1$ and $\mathcal{N}_2$ are faithfully isomorphic.
\end{theorem}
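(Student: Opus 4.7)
The plan is to deduce Theorem~\ref{MainUniqCor} from the single-output linear independence result Theorem~\ref{LITheoremIntro} (applied for the specific meromorphic $\sigma$ in the statement), by translating the assumed equality of maps into a forbidden linear dependency via the amalgam construction of Section III.

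\textit{Step 1 (amalgamate).} Form $\mathcal{M}=\mathcal{N}_1\vee\mathcal{N}_2$ with injective embeddings $\pi_1:V^1\to V^\mathcal{M}$ and $\pi_2:V^2\to V^\mathcal{M}$ provided by Proposition~\ref{amalgamprop}; the amalgam $\mathcal{M}$ is non-degenerate and clones-free. Because $\pi_m$ is an extensional isomorphism of $\mathcal{N}_m$ onto its image in $\mathcal{M}$, the restriction of $\pi_m$ sends the ancestor subnetwork $\mathcal{N}_m(w)$ bijectively onto the ancestor subnetwork $\mathcal{M}(\pi_m(w))$, and in particular $\outmap{\mathcal{M}(\pi_m(w))}{\sigma}=(\outmap{\mathcal{N}_m}{\sigma})_w$ for every $w\in V_{out}$ and $m\in\{1,2\}$, viewed as functions on the common input set $\R^{V_{in}}$ (with trivial dependence on any unused coordinates).

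\textit{Step 2 (identify output representatives).} The hypothesis $\outmap{\mathcal{N}_1}{\sigma}=\outmap{\mathcal{N}_2}{\sigma}$ gives $(\outmap{\mathcal{N}_1}{\sigma})_w=(\outmap{\mathcal{N}_2}{\sigma})_w$ for every $w\in V_{out}$. Suppose, for contradiction, that $\pi_1(w)\neq\pi_2(w)$ for some $w$. Then $\mathcal{M}(\pi_1(w))$ and $\mathcal{M}(\pi_2(w))$ are two single-output, non-degenerate, clones-free LFNNs with \emph{distinct} output nodes inside $\mathcal{M}$, and in particular they are not faithfully isomorphic, since any faithful isomorphism would require their output nodes to coincide as elements of $V^\mathcal{M}$. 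Theorem~\ref{LITheoremIntro} then asserts that $\{\bm{1},\outmap{\mathcal{M}(\pi_1(w))}{\sigma},\outmap{\mathcal{M}(\pi_2(w))}{\sigma}\}$ is linearly independent on $\R^{V_{in}}$, contradicting the nontrivial dependency $\outmap{\mathcal{M}(\pi_1(w))}{\sigma}-\outmap{\mathcal{M}(\pi_2(w))}{\sigma}\equiv 0$. Hence $\pi_1(w)=\pi_2(w)$ for all $w\in V_{out}$.

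\textit{Step 3 (assemble the faithful isomorphism).} Write $w^*:=\pi_1(w)=\pi_2(w)$ for $w\in V_{out}$. The ancestor subnetworks $\mathcal{M}(\pi_1(w))$ and $\mathcal{M}(\pi_2(w))$ coincide as subgraphs of $\mathcal{M}$, and both are extensionally isomorphic to $\mathcal{N}_m(w)$ via $\pi_m$. Consequently, $\pi_2^{-1}\circ\pi_1$ restricts to a faithful isomorphism $V^{\mathcal{N}_1(w)}\to V^{\mathcal{N}_2(w)}$ for each $w\in V_{out}$, and these restrictions are automatically compatible on overlaps since they are all restrictions of the single map $\pi_2^{-1}\circ\pi_1$. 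By non-degeneracy, $V^m=\bigcup_{w\in V_{out}}V^{\mathcal{N}_m(w)}$ for $m=1,2$, so $\pi_1(V^1)=\bigcup_w V^{\mathcal{M}(w^*)}=\pi_2(V^2)$. The map $\pi:=\pi_2^{-1}\circ\pi_1:V^1\to V^2$ is then a well-defined bijection that is the identity on $V_{in}$ and $V_{out}$ and preserves edges, weights, and biases, giving the desired $\mathcal{N}_1\fisom\mathcal{N}_2$.

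\textit{Main obstacle.} The delicate point is Step~2: one must verify that Theorem~\ref{LITheoremIntro} applies to the ancestor subnetworks of $\mathcal{M}$, whose input supports $V_{in}^{\mathcal{M}(\pi_m(w))}\subset V_{in}$ may be strict subsets and may differ between $m=1$ and $m=2$. This is handled by treating the output maps as functions on the common $\R^{V_{in}}$ with trivial dependence on unused coordinates; the linear independence of the extended maps follows from the linear independence of their restrictions on the respective input supports. Once this bookkeeping is dispatched, the remainder is a clean structural deduction whose rigidity hinges entirely on the no-clones property of the amalgam inherited via Proposition~\ref{amalgamprop}.
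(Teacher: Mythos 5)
Your overall outline follows the paper's own proof closely: amalgamate $\mathcal{N}_1$ and $\mathcal{N}_2$, derive a forbidden linear dependency from the hypothesis when $\pi_1(w)\neq\pi_2(w)$, and then assemble $\pi_2^{-1}\circ\pi_1$ into the faithful isomorphism. Step~3 is essentially correct and matches the paper. However, Step~2 contains a genuine gap.

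You argue that $\mathcal{M}(\pi_1(w))$ and $\mathcal{M}(\pi_2(w))$ are ``not faithfully isomorphic, since any faithful isomorphism would require their output nodes to coincide as elements of $V^{\mathcal{M}}$,'' and then invoke the linear independence theorem. This is the wrong non-isomorphism condition: the hypothesis of Theorem~\ref{MainThm} (and equivalently the $\simeq$-non-isomorphism hypothesis of Theorem~\ref{LITheoremIntro}, once the two single-output LFNNs are identified up to node labeling) requires that the networks be pairwise non-\emph{extensionally} isomorphic, not merely non-faithfully isomorphic. For single-output GFNNs with \emph{distinct} output nodes, ``not faithfully isomorphic'' is automatic and vacuous; two such networks can perfectly well be extensionally isomorphic, and in that case they produce identical output maps, so the linear dependency you exhibit is no contradiction at all. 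The missing idea, which the paper supplies, is that extensional isomorphism of the two ancestor subnetworks $\mathcal{M}(\pi_1(w))$ and $\mathcal{M}(\pi_2(w))$ would force the nodes $\pi_1(w)$ and $\pi_2(w)$ to be clones in the amalgam $\mathcal{M}$, contradicting the no-clones property that $\mathcal{M}$ inherits from Proposition~\ref{amalgamprop}. Only after ruling out extensional isomorphism in this way can one apply the linear independence result.

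A secondary issue is the citation of Theorem~\ref{LITheoremIntro}: that statement is existential in $\sigma$ (there \emph{exists} a $\sigma$ near $\rho$ with the property), and is phrased for networks in $\mathscr{N}^{D_{in},1}_{nc}$. The nonlinearity in Theorem~\ref{MainUniqCor} is \emph{given}, and the ancestor subnetworks are GFNNs; the directly applicable statement is Theorem~\ref{MainThm}, which is stated for arbitrary $\sigma$ of the prescribed form and for non-degenerate clones-free LFNNs with a shared single output node. Finally, your remark on ``trivial dependence on unused coordinates'' does not by itself bridge the fact that the ancestor subnetworks $\mathcal{M}(\pi_1(w))$ and $\mathcal{M}(\pi_2(w))$ may have genuinely different input supports $V_{in}\cap V^{\mathcal{M}(\pi_m(w))}$, whereas Theorem~\ref{MainThm} requires the two networks to share the same input set; this point requires more care than the remark suggests, although the paper's own proof glosses over it as well.
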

\begin{theorem} 
\label{MainThm}
Let $\mathcal{N}_j$, $j\in\{1,2,\dots, n\}$, be non-degenerate clones-free LFNNs with the same input set $V_{in}$ and the same single output node $\{v_{out}\}$. Furthermore, suppose that no two networks $\mathcal{N}_{j_1}$, $\mathcal{N}_{j_2}$, $j_1\neq j_2$, are extensionally isomorphic. Consider the nonlinearity 
\begin{equation*}
\sigma=C+\sum_{s\in S} c_s\tanh(\pi(\,\cdot-s)),
\end{equation*}
with $C\in\R$, $S$ a discrete self-avoiding set, and $\{c_s\}_{s\in S}\in\ell^1(S)$, where each $c_s$ is nonzero and real. Then $\{\outmap{\mathcal{N}_j}{\sigma} \}_{j\hspace{0.5mm}=\hspace{0.5mm}1}^n\cup\{\bm{1}\}$ is a linearly independent set of functions from $\R^{V_{in}}$ to $\R$.
\end{theorem}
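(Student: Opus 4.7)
The plan is to argue by contradiction along the amalgamation/input-splitting/input-anchoring route outlined in Section II. Suppose a non-trivial relation $\lambda_0\bm{1}+\sum_{j=1}^n\lambda_j\outmap{\mathcal{N}_j}{\sigma}\equiv 0$ holds on $\R^{V_{in}}$. Lemma \ref{MashingTogetherLemma} immediately furnishes a non-degenerate clones-free LFNN with a single input node whose output maps together with $\bm{1}$ are linearly dependent. Let $\mathscr{M}$ denote the collection of all non-degenerate clones-free GFNNs $\widetilde{\mathcal{M}}$ with single input admitting a relation $\lambda_0\bm{1}+\sum_{w\in V_{out}^{\widetilde{\mathcal{M}}}}\lambda_w\outmap{w}{\sigma}\equiv 0$ with $\lambda_w\neq 0$ for every output $w$ (any $w$ with $\lambda_w=0$ is removed from the output set and the ancestor subnetwork of the remainder still lies in $\mathscr{M}$). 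Then $\mathscr{M}\neq\varnothing$, and I fix a representative $\mathcal{M}'\in\mathscr{M}$ of minimum size, where size is measured by the number of non-input nodes with ties broken lexicographically by the number of edges. Write its layout as $(1,D_1^{\mathcal{M}'},\dots,D_m^{\mathcal{M}'})$ and its first-layer weights as $(\omega_1,\dots,\omega_{D_1^{\mathcal{M}'}})$. By Lemma \ref{NaturalDomainLemma} every output map of $\mathcal{M}'$ extends to a holomorphic function on a connected open set containing $\R$, so the dependency propagates by analytic continuation throughout the intersection of these natural domains, and in particular along the imaginary axis.

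The proof splits on whether there exist $j\neq j'$ with $\omega_j/\omega_{j'}\notin\Q$. In the \emph{easy} case, set $k=\dim\langle\omega_1,\dots,\omega_{D_1^{\mathcal{M}'}}\rangle_{\Q}\geq 2$ and, after reindexing, let $\omega_1,\dots,\omega_k$ be a rational basis with matrix $Q\in\Q^{D_1^{\mathcal{M}'}\times k}$ as in Lemma \ref{MainTorusLemma}. Evaluating the continued identity at arguments of the form $it$ and using the $i$-periodicity of $\sigma$, the first-layer pre-activations $\omega_j it+\theta_j$ descend to the torus $T^{D_1^{\mathcal{M}'}}$. Lemma \ref{MainTorusLemma} then supplies an open set $C\subset\R^k$ and, for every $\bm{s}\in C$, a sequence $t^{n,\bm{s}}$ with $|t^{n,\bm{s}}|\to\infty$ whose torus orbit approaches $Q\cdot(\omega_1 s_1,\dots,\omega_k s_k)$. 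Passing to the limit in the identity and invoking Lemmas \ref{EasyContinuation}--\ref{TrickyContinuation} upgrades the single-variable relation to a $k$-variable holomorphic identity in $s_1,\dots,s_k$; this is precisely the output relation of a new network $\mathcal{M}''\in\mathscr{M}$ with $k\geq 2$ input nodes and the same higher-layer structure, whose first-layer weight matrix contains a $k\times k$ identity block in its first $k$ rows.

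In the \emph{hard} case, all ratios $\omega_j/\omega_{j'}$ are rational, the torus orbit is one-dimensional, and the above route supplies no new freedom. Instead I exploit the self-avoiding property of the set $S$ of real parts of the poles of $\sigma$ (guaranteed by Propositions \ref{SAgeneralprop} and \ref{ApproxProp}). Applied to the finite list of distinct first-layer affine pairs $(\omega_j,\theta_j)$, self-avoidance produces $j^*$ and $t^*\in\R$ such that $\omega_{j^*}t^*+\theta_{j^*}$ lies in the pole locus of $\sigma$ while $\omega_j t^*+\theta_j$ does not, for $j\neq j^*$. Sliding off the real axis along $t^*+i\varepsilon$ then makes exactly one first-layer pre-activation singular; localizing the continued identity around this isolated singularity via Lemma \ref{TrickyContinuation} forces the contribution of the branch through the $j^*$-th first-layer node to decouple from the remaining branches, producing a holomorphic identity among strict subnetworks of $\mathcal{M}'$. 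Regrouping these pieces into a GFNN $\mathcal{M}''$ (generally no longer layered, hence the need for the framework of Section III) again yields a member of $\mathscr{M}$ whose input has effectively been split into several independent input nodes.

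In both cases, $\mathcal{M}''$ is fed into Lemma \ref{InputFixingLemma}: anchoring all but one of its inputs, alternative (i) produces a clones-free anchored network $\mathcal{M}''_a\in\mathscr{M}$ with strictly fewer non-input nodes than $\mathcal{M}'$, while alternative (ii) directly yields a clones-free subnetwork $\mathcal{N}\in\mathscr{M}$ of smaller size; either alternative contradicts the minimality of $\mathcal{M}'$, completing the proof. The main obstacle is the hard case. When the $\omega_j$ are rationally dependent the torus winding is one-dimensional and ordinary analytic continuation offers no mechanism to decouple the first-layer pre-activations; the self-avoiding property is the device that manufactures a single ``unshared'' pole and thereby allows one branch of the network to be peeled off. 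The delicate task is to verify that the object glued together through Lemmas \ref{EasyContinuation}--\ref{TrickyContinuation} and the identity theorem is indeed a non-degenerate clones-free GFNN in $\mathscr{M}$, and that its size is strictly less than that of $\mathcal{M}'$.
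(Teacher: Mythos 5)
Your high-level plan matches the paper's: pass to a minimal counterexample via Lemma~\ref{MashingTogetherLemma}, split on $k=\dim\langle\omega_1,\dots,\omega_{D_1}\rangle_\Q$, use Lemma~\ref{MainTorusLemma} to split inputs in the easy case, invoke the self-avoiding property in the hard case, then anchor inputs and contradict minimality. The easy case ($k\geq 2$) description is essentially correct (modulo the fact that the upgrade from a single-variable to a $k$-variable identity there uses only Lemma~\ref{EasyContinuation}; Lemma~\ref{TrickyContinuation} is reserved for the hard case). However, the hard case contains a factual error and a substantive gap.

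You claim that self-avoidance yields $j^*$ and $t^*\in\R$ with $\omega_{j^*}t^*+\theta_{j^*}$ in the pole locus of $\sigma$, and then you ``slide off the real axis along $t^*+i\varepsilon$.'' This cannot work as stated: the poles of $\sigma$ lie on $S+(n+\tfrac12)i$, all strictly off the real axis, whereas $\omega_{j^*}t^*+\theta_{j^*}$ is real. The mechanism the paper actually uses is identity~\eqref{FnsAlongCritLine}: writing $\omega_j=N_ja$ with $N_j\in\Z$ and at least one $N_j$ odd (achievable by rescaling $a$), and restricting the outputs $\outmapTNOadap{v_j^1}{\sigma}{\mathcal{M}'}$ to the critical line $\R+\tfrac{i}{2a}$, the $i$-periodicity turns each into $C+\sum_s c_s\tanh(\pi(N_jat+\theta_j-s))$ if $N_j$ is even, and $C+\sum_s c_s\coth(\pi(N_jat+\theta_j-s))$ if $N_j$ is odd. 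Self-avoidance is applied to the finitely many pairs $(N_j,\theta_j)$ with $N_j$ odd and produces a pole on $\R+\tfrac{i}{2a}$ belonging to exactly one of the $\coth$-type functions. Without this parity/critical-line device, there is no pole to isolate at all, and your hard case simply does not get off the ground.

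Beyond that, the paper does not ``peel off a branch'' after locating the pole; it performs a \emph{second} input-splitting using the second-layer weights $\omega_{v_p^2v_{j^*}^1}$, applies Lemma~\ref{MainTorusLemma} to their $\Q$-basis of dimension $\bar{k}$, and uses the reciprocal of $\outmapTNOadap{v_{j^*}^1}{\sigma}{\mathcal{M}'}$ together with the open mapping theorem to manufacture the sequences $(z^{n,\bm s})$ converging to $t^*$. This requires the two auxiliary Claims: that $L(\mathcal{M}')\geq 2$ and $v_{j^*}^1$ has children at level $2$ (otherwise the pole would persist in $h_{out}\equiv c$, a contradiction), and that $v_{j^*}^1\notin V_{out}^{\mathcal{M}'}$ (shown via the very sequences just built). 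Lemma~\ref{TrickyContinuation} is needed here precisely because those sequences approach but never reach the singular point $t^*$. Your proposal omits all of this and hence has a genuine gap in the hard case. Two further structural caveats: the paper keeps $\mathscr{M}$ as \emph{LFNNs}, since the case analysis enumerates nodes by exact level and relies on first- and second-layer structure; taking $\mathscr{M}$ as arbitrary GFNNs would require revisiting those steps. And the paper's size order is minimum depth first, number of nodes second, because the $k=1$ construction is guaranteed to reduce depth (by one) whereas the $k\geq 2$ construction is guaranteed to reduce the node count; a pure non-input-node count with an edge tie-break would need a separate verification that both reductions strictly decrease that functional.
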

\noindent Before embarking on the proofs of Theorems \ref{MainUniqCor} and \ref{MainThm}, we show how Theorems \ref{UniquenessTheoremIntro} and \ref{LITheoremIntro} follow from these two results together with Proposition \ref{ApproxProp}. 
\begin{proof}[Proof of Theorem \ref{UniquenessTheoremIntro}]
Let $\rho$ be as in the statement of Theorem \ref{UniquenessTheoremIntro}, and let $\epsilon>0$ be arbitrary. Proposition \ref{ApproxProp} guarantees the existence of a discrete self-avoiding set $S\subset\R$, a sequence $\{c_s\}_{s\in S}\in \ell^1(S)$ with $c_s\neq 0$, for all $s\in S$, and real numbers $\alpha>0$ and $C$, such that the function $\sigma$ defined by
\begin{equation*}
\sigma=C+\sum_{s\in S}c_s \tanh(\alpha(\cdot-s))
\end{equation*}
satisfies $\|\sigma-\rho\|_{L^{\infty}(\R)}<\epsilon$.
Now suppose that $\mathcal{N}=(V,E,V_{in},\allowbreak V_{out},\Omega,\Theta)$ and $\widetilde{\mathcal{N}}=(\widetilde{V},\widetilde{E},{V}_{in},\allowbreak {V}_{out},\widetilde{\Omega},\widetilde{\Theta})$ are clones-free non-degenerate LFNNs with the same input set $V_{in}$ and such that $\outmapNOadap{\mathcal{N}}{\sigma}(x)=\outmapNOadap{\widetilde{\mathcal{N}}}{\sigma}(x)$, for all $x\in\R^{V_{in}}$. Consider the scaled objects $\sigma_\alpha:=\sigma\left(\frac{\pi}{\alpha}\,\cdot\right)$, $S_\alpha=\frac{\alpha}{\pi}S$, $\mathcal{N^\alpha}=\big(V,E,V_{in},V_{out},\allowbreak  \frac{\alpha}{\pi}\Omega,\frac{\alpha}{\pi} \Theta\big)$, and $\widetilde{\mathcal{N}}^\alpha=\big(\widetilde{V},\widetilde{E},{V}_{in},{V}_{out},\frac{\alpha}{\pi}\widetilde{\Omega},\frac{\alpha}{\pi}\widetilde{\Theta}\big)$, where $\frac{\alpha}{\pi}\Omega=\left\{\frac{\alpha}{\pi}\omega:\omega\in\Omega\right\}$, and $\frac{\alpha}{\pi}\Theta,\frac{\alpha}{\pi}\widetilde{\Omega},\frac{\alpha}{\pi}\widetilde{\Theta}$ are defined analogously.
Then $\outmapNOadap{\mathcal{N}^\alpha}{\sigma_\alpha}(x)=\outmapNOadap{\mathcal{N}}{\sigma}(x)=\outmapNOadap{\widetilde{\mathcal{N}}}{\sigma}(x)=\outmapNOadap{\widetilde{\mathcal{N}}^\alpha}{\sigma_\alpha}(x)$, for all $x\in \R^{V_{in}}$.
Moreover, 
\begin{equation*}
\sigma_\alpha=C+\sum_{s\in S_\alpha}c_s \tanh(\pi(\cdot-s)),
\end{equation*}
and $S_\alpha$ is a discrete self-avoiding set (as the self-avoiding property is preserved under scaling by a nonzero real number), so by Theorem \ref{MainUniqCor} we obtain $\mathcal{N}^\alpha\fisom \widetilde{\mathcal{N}}^\alpha$, which implies $\mathcal{N}\simeq\widetilde{\mathcal{N}}$.  
\end{proof}
\begin{proof}[Proof of Theorem \ref{LITheoremIntro}]
Let $\rho$ be as in the statement of Theorem \ref{LITheoremIntro}, and let $\epsilon>0$ be arbitrary. Proposition \ref{ApproxProp} guarantees the existence of a discrete self-avoiding set $S\subset\R$, a sequence $\{c_s\}_{s\in S}\in \ell^1(S)$ with $c_s\neq 0$, for all $s\in S$, and real numbers $\alpha>0$ and $C$, such that the function $\sigma$ defined by
\begin{equation*}
\sigma=C+\sum_{s\in S}c_s \tanh(\alpha(\cdot-s))
\end{equation*}
satisfies $\|\sigma-\rho\|_{L^{\infty}(\R)}<\epsilon$.
Now suppose that $\mathcal{N}_j=(V^j,E^j,V_{in},\allowbreak \{v_{out}\},\Omega^j,\Theta^j)$, $j\in\{1,\dots,n\}$, are non-degenerate clones-free LFNNs such that no two $\mathcal{N}_{j_1}$, $\mathcal{N}_{j_2}$, $j_1\neq j_2$, are faithfully isomorphic. As $\{v_{out}\}$ is a singleton, it follows that no two $\mathcal{N}_{j_1}$, $\mathcal{N}_{j_2}$, $j_1\neq j_2$, are extensionally isomorphic either.
Now, define the scaled objects $\sigma_\alpha:=\sigma\left(\frac{\pi}{\alpha}\,\cdot\right)$, $S_\alpha=\frac{\alpha}{\pi}S$, and $\mathcal{N}^\alpha_j=\left(V^j,E^j,V_{in},\{v_{out}\},\frac{\alpha}{\pi}\Omega^j,\frac{\alpha}{\pi} \Theta^j\right)$, for $j\in\{1,\dots,n\}$, where
$\frac{\alpha}{\pi}\Omega^j=\left\{\frac{\alpha}{\pi}\omega:\omega\in\Omega_j\right\}$ and $\frac{\alpha}{\pi}\Theta^j=\left\{\frac{\alpha}{\pi}\theta:\theta\in\Theta^j\right\}$.
Then the $\mathcal{N}_{j}^\alpha$ are non-degenerate and clones-free, and no two $\mathcal{N}_{j_1}^\alpha$, $\mathcal{N}_{j_2}^\alpha$, $j_1\neq j_2$, are extensionally isomorphic.
Moreover,
\begin{equation*}
\sigma_\alpha=C+\sum_{s\in S_\alpha}c_s \tanh(\pi(\cdot-s)),
\end{equation*}
and $S_\alpha$ is a discrete self-avoiding set, so by Theorem \ref{MainThm} we obtain that $\{\outmapNOadap{\mathcal{N}_j^\alpha}{\sigma_\alpha} \}_{j\hspace{0.5mm}=\hspace{0.5mm}1}^n\cup\{\bm{1}\}$ is linearly independent.
Now, suppose by way of contradiction that there is linear dependency
$
\lambda_0+\sum_{j=1}^n\lambda_j\,\outmapNOadap{\mathcal{N}_j}{\sigma}=0
$
among $\{\outmap{\mathcal{N}_j}{\sigma}\}_{j\hspace{0.5mm}=\hspace{0.5mm}1}^n\cup\{\bm{1}\}$. But then
\begin{equation*}
\lambda_0+\sum_{j=1}^n\lambda_j\,\outmapNOadap{\mathcal{N}_j^\alpha}{\sigma_\alpha}=\lambda_0+\sum_{j=1}^n\lambda_j\,\outmapNOadap{\mathcal{N}_j}{\sigma}=0,
\end{equation*}
which contradicts the linear independence of $\{\outmapNOadap{\mathcal{N}_j^\alpha}{\sigma_\alpha} \}_{j\hspace{0.5mm}=\hspace{0.5mm}1}^n\cup\{\bm{1}\}$. We deduce that $\{\outmap{\mathcal{N}_j}{\sigma}\}_{j\hspace{0.5mm}=\hspace{0.5mm}1}^n\cup\{\bm{1}\}$ must be linearly independent, as desired.
\end{proof}
\begin{proof}[Proof of Theorem \ref{MainThm}]
We argue by contradiction, so suppose that the statement is false. Specifically, let $\mathcal{N}_j$, $j\in\{1,2,\dots , n\}$, be LFNNs and $\sigma$ a nonlinearity as in the statement of the theorem, and suppose that $\{\outmap{\mathcal{N}_j}{\sigma} \}_{j\hspace{0.5mm}=\hspace{0.5mm}1}^n\cup\{\bm{1}\}$ is linearly dependent. Then, by Lemma \ref{MashingTogetherLemma}, there exists a non-degenerate clones-free LFNN $\mathcal{M}=(V^\mathcal{M},E^\mathcal{M},V_{in}^\mathcal{M},V_{out}^\mathcal{M},\Omega^\mathcal{M},\Theta^\mathcal{M})$ with a single input node $V_{in}^\mathcal{M}=\{v_{in}\}$, such that $\{\outmap{w}{\sigma}:w\in V_{out}^\mathcal{M}\}\cup \{\bm{1}\}$ is a linearly dependent set of functions from $\R$ to $\R$.
Let $\mathscr{M}$ denote the set of all non-degenerate clones-free LFNNs $\widetilde{\mathcal{M}}=(V^{\widetilde{\mathcal{M}}},E^{\widetilde{\mathcal{M}}},\{v_{in}\},V_{out}^{\widetilde{\mathcal{M}}},\allowbreak\Omega^{\widetilde{\mathcal{M}}},\Theta^{\widetilde{\mathcal{M}}})$ such that $\{\outmap{w}{\sigma}:w\in V_{out}^{\widetilde{\mathcal{M}}}\}\allowbreak\cup\{\bm{1}\}$ is linearly dependent. We then have $\mathscr{M}\neq\varnothing$, simply as $\mathcal{M}\in\mathscr{M}$.
Denote by $\mathscr{M}_{min}$ the set of all networks in $\mathscr{M}$ of minimum depth, and fix a network $\mathcal{M}'\in \mathscr{M}_{min}$ with the minimal number of nodes among all the networks in $\mathscr{M}_{min}$. The proof proceeds by constructing a network $\mathcal{N}\in\mathscr{M}_{min}$ with a strictly smaller number of nodes than $\mathcal{M}'$, thereby deriving a contradiction and concluding the proof. 
First note that linear dependence of $\{\outmap{w}{\sigma}:w\in V_{out}^{\mathcal{M}'}\}\cup\{\bm{1}\}$ is equivalent to the existence of a nonzero set of real numbers $\{\lambda_w\}_{w\in V_{out}^{\mathcal{M}'}}$ and a real number $c\in\R$ such that $h_{out}:\R \to \R$, given by
\begin{equation*}
 h_{out}\coleqq\sum_{w\in V_{out}^{\mathcal{M}'}}\lambda_w \outmap{w}{\sigma},
\end{equation*}
 is constant-valued, i.e., $h_{out}(t)=c$, for all $t\in\R$. Note that $\lambda_w\neq 0$, for all $w\in V_{out}^{\mathcal{M}'}$, for otherwise the ancestor subnetwork $\mathcal{M'}\left(\{w\in V_{out}^{\mathcal{M}'},\, \lambda_w\neq 0 \}\right)$ would be an element of $\mathscr{M}_{min}$ with strictly fewer nodes than $\mathcal{M}'$, contradicting the minimality of $\mathcal{M}'$.
 
Next, note that $\sigma$ is a real meromorphic function whose set of poles is 
\begin{equation}\label{eq:PolesSigma}
P=\bigcup_{n\in \Z}\left(S+\left(n+\frac{1}{2}\right)i\right),
\end{equation}
and in particular, $\mathcal{M}'$ and $\sigma$ satisfy the assumptions of Lemma \ref{NaturalDomainLemma}, and so the sets $\C\setminus\dom_{\outmap{w}{\sigma}}$ are closed and countable, where $\dom_{\outmap{w}{\sigma}}$ denotes the natural domain of $\outmap{w}{\sigma}$, for $w\in V_{out}^{\mathcal{M}'}$. Therefore, as a linear combination of holomorphic functions, $h_{out}$ is a holomorphic function on $\dom_{h_{out}}\coleqq \bigcap_{w\in V_{out}^{\mathcal{M}'}}\dom_{\outmap{w}{\sigma}}$.
As $\C\setminus \dom_{\outmap{w}{\sigma}}$ are closed and countable, $\C\setminus\dom_{h_{out}}$ is also closed and countable, and therefore $\dom_{h_{out}}$ is a connected open set.
It follows by the identity theorem \cite[Thm. 10.18]{Rudin1987} that $h_{out}$ continues in a unique fashion to a holomorphic function on $\dom_{h_{out}}$ with $h_{out}(t)=c$, for all $t\in\dom_{h_{out}}$. 
 
Set $V_\ell=\{v\in V^{\mathcal{M}'}:\lvl(v)=\ell\}$, for $\ell\geq 1$. Let $k=\dim \left\langle\{\omega_{uv_{in}}:u\in V_1\}\right\rangle_{\Q}$ and enumerate the nodes $V_1=\{v_1^1,\dots,v^1_{D_1}\}$ so that $\{\omega_{v_1^1v_{in}},\dots, \omega_{v_k^1 v_{in}}\}$ is a basis for $\langle \omega_{v_1^1v_{in}},\dots, \allowbreak \omega_{v_{D_1}^1 v_{in}}\rangle_{\Q}$. 
In the remainder of the proof, we distinguish between the cases $k\geq 2$ and $k=1$.

\textit{The case $k\geq 2$.} Fix a real number
\begin{equation}\label{ChoiceOfA}
A\in[0,1] \biggm\backslash\bigcup_{p\hspace{0.5mm}=\hspace{0.5mm}1}^{D_1}\frac{S-\theta_{v_{p}^1}}{\omega_{v_p^1v_{in}}},
\end{equation}
chosen so that none of $\outmap{v_{p}^1}{\sigma}(z)=\sigma(\omega_{v_p^1v_{in}}z + \theta_{v_p^1})$, $p\in\{1,\dots, D_1\}$, has singularities along $A+i\,\R$. Such a number always exists, as $\bigcup_{p\hspace{0.5mm}=\hspace{0.5mm}1}^{D_1}{(S-\theta_{v_{p}^1})}/{\omega_{v_p^1v_{in}}}$ is a discrete set.
Now, write $(\omega_{v_p^1v_{in}})_{p\hspace{0.5mm}=\hspace{0.5mm}1}^{D_1}={Q}\cdot (\omega_{v_p^1v_{in}})_{p\hspace{0.5mm}=\hspace{0.5mm}1}^{{k}}$, where ${Q}=({q}_{pj})\in\Q^{D_1\times k}$ is a rational matrix whose first ${k}$ rows form a ${k}\times{k}$ identity matrix.
Let $C\subset \R^k$ be a set satisfying the conclusion of Lemma \ref{MainTorusLemma} applied with $\alpha_p=\omega_{v_p^1v_{in}}$, $p\in\{1,\dots, D_1\}$.
Given an arbitrary $\bm{s}=(s_1,s_2,\dots, s_{{k}})\in C$, Lemma \ref{MainTorusLemma} yields sequences $(t^{n,\bm{s}})_{n\in\N}\subset \R$ and $(\bm{r}^{n,\bm{s}})_{n\in\N}\subset{C}$ such that
\begin{align}
&(\omega_{v_{1}^1v_{in}}t^{n,\bm{s}},\dots,\omega_{v_{D_1}^1\! v_{in}}t^{n,\bm{s}})+\Z^{D_1}= Q\cdot \left(\omega_{v_{1}^1v_{in}}{r_1^{n,\bm{s}}},\dots, \omega_{v_{{k}}^1 v_{in}} {r_{k}^{n,\bm{s}}}\right)+\Z^{D_1},\label{k>1TwoToriSeq}\\
&|t^{n,\bm{s}}|\to \infty\quad\text{ as }n\to\infty,\\
&\bm{r}^{n,\bm{s}}\to\bm{s}\quad \text{in }\R^k,\text{ as }n\to\infty.
\end{align}
We now perform a calculation that will enable us to interpret the single input variable of $\mathcal{M}'$ as a rational linear combination of $k$ input variables of another LFNN $\mathcal{M}''$, to be specified below. The argument will then proceed by anchoring at all but one of the inputs of $\mathcal{M}''$. It is this last step that uses $k\geq 2$ as a key assumption, as anchoring requires at least two input nodes to be meaningful.
We thus have
\begin{align}
&\sigma\left(\omega_{v_p^1v_{in}}(A+ i\, t^{n,\bm{s}})+{\theta}_{v_p^1}\right)\notag\\
=\,&\sigma\left(\omega_{v_p^1v_{in}} A  + i(\omega_{v_p^1v_{in}} t^{n,\bm{s}}+\Z)+{{\theta}}_{v_p^1}\right)\label{1:SmallCalc}\\
=\,&\sigma\left(\omega_{v_p^1v_{in}} A + i\cdot\left(\sum_{j=1}^{k} {q}_{pj}\,\omega_{v_{j}^1v_{in}}r_j^{n,\bm{s}}+\Z\right) +{{\theta}}_{v_p^1}\right)\label{2:SmallCalc}\\
=\,&\sigma\left(\sum_{j=1}^{ k} { q}_{pj}\,\omega_{v_{j}^1v_{in}}(A +i \,r_j^{n,\bm{s}}) +{{\theta}}_{v_p^1}\right)\label{3:SmallCalc},
\end{align}
for $p\in\{1,\dots,D_1\}$, where in \eqref{1:SmallCalc} we used the $i$-periodicity of $\sigma$, in \eqref{2:SmallCalc} we used \eqref{k>1TwoToriSeq}, and in \eqref{3:SmallCalc} we used $\omega_{v_p^1v_{in}}=\sum_{j=1}^{ k} { q}_{pj}\,\omega_{v_{j}^1v_{in}}$ and the $i$-periodicity of $\sigma$ again. Owing to \eqref{ChoiceOfA}, none of $\outmap{v_{p}^1}{\sigma}$, $p\in\{1,\dots,D_1\}$, has singularities along $A+i\,\R$, and thus all the quantities in \eqref{1:SmallCalc} -- \eqref{3:SmallCalc} are well-defined. The calculation just presented suggests constructing a new LFNN by ``splitting'' the input node $v_{in}$ of $\mathcal{M}'$ into $k$ new input nodes. Formally, we define an LFNN $\mathcal{M}''=(V^{\mathcal{M}''},E^{\mathcal{M}''},V_{in}^{\mathcal{M}''},V_{out}^{\mathcal{M}''},\Omega^{\mathcal{M}''},\Theta^{\mathcal{M}''})$ as follows:
\begin{itemize}[--]
\item $V_{in}^{{\mathcal{M}''}} = \{u_1,\dots,u_k\}$ is a set of $k$ newly-created input nodes (disjoint from $V^{\mathcal{M}'}$),
\item $V^{{\mathcal{M}''}}\coleqq V_{in}^{{\mathcal{M}''}} \cup\bigcup_{\ell\geq 1}V_\ell$,
\item $
E^{{\mathcal{M}''}} \coleqq \{(v,\widetilde{v})\in  E^{\mathcal{M}'}: \; \lvl(v)\geq 1 \}\cup \{(u_j,v_{p}^1):1\leq p\leq D_1,\, 1\leq j\leq k,\, q_{pj}\neq 0\},
$
\item $V_{out}^{{\mathcal{M}''}}\coleqq V_{out}^{{\mathcal{M}'}}$,
\item Define $\omega_{v_p^1u_j}:=q_{pj}\,\omega_{v_j^1v_{in}}$, for $p\in\{1,\dots, D_1\}$, $j\in\{1,\dots, k\}$, and let
\begin{equation*}
\begin{split}
\Omega^{{\mathcal{M}''}} \coleqq \{\omega_{\widetilde{v}v}\in\Omega^{\mathcal{M}'}: & \; \lvl(v)\geq 1 \}\cup \{(\omega_{v_p^1u_j}: 1\leq p\leq D_1,\, 1\leq j\leq k,\, q_{pj}\neq 0\},
\end{split}
\end{equation*}
\item $\Theta^{{\mathcal{M}''}}:=\Theta^{\mathcal{M}'}$.
\end{itemize}
The procedure for constructing ${\mathcal{M}''}$ for a given $\mathcal{M}'$ is illustrated in Figure \ref{fig:k>1,M->tildeM}.

Owing to \eqref{1:SmallCalc} -- \eqref{3:SmallCalc} and the construction of ${\mathcal{M}''}$, we have the following ``input splitting'' relationship
\begin{equation}\label{eq:k>1inputFraying}
\outmapTNOadap{v_{p}^1}{\sigma}{\mathcal{M}'}(A+i\,t^{n,\bm{s}})=\outmapTNOadap{v_{p}^1}{\sigma}{{\mathcal{M}''}}(A +i \,r_1^{n,\bm{s}},\dots,A +i \,r_{k}^{n,\bm{s}}),
\end{equation}
for $p\in\{1,\dots,D_1\}$.
\begin{figure}[h!]\centering
\includegraphics[height=40mm,angle=0]{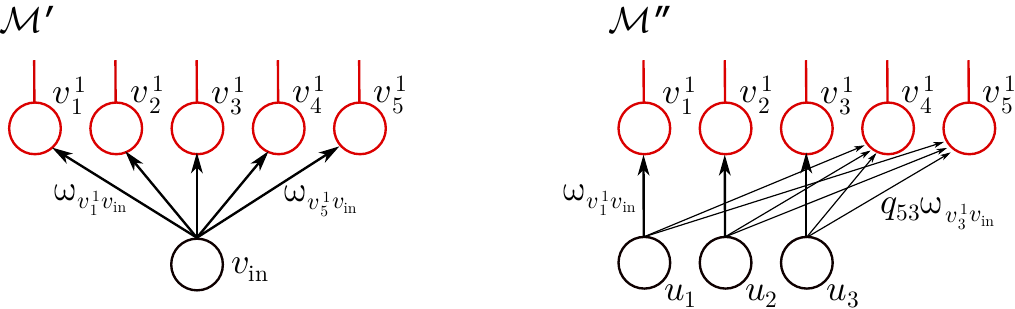}
\caption{\emph{Input splitting, case $k\geq 2$.} \textit{Left:} A neural network $\mathcal{M}'$, assumed to be a minimal element of $\mathscr{M}_{min}$, with $D_1=5$. \textit{Right:} The corresponding neural network ${\mathcal{M}''}$, assuming $k=3$.  
Note that, as the first $k$ rows of $Q$ form a $k\times k$ identity matrix,  
we have $(u_j,v^1_p)\in E^{\mathcal{M}''}\iff p=j$, for all $j,p\in\{1,\dots, k\}$.
The function $F$ in \eqref{houtWithF} and \eqref{eq:k>1-def-of-houttilde} corresponds to the map realized by the shared part (in red) of $\mathcal{M}'$ and $\mathcal{M}''$. \label{fig:k>1,M->tildeM}}
\end{figure}
We now show that ${\mathcal{M}''}$ is  non-degenerate and clones-free. To this end, first note that, for every $j\in\{1,\dots, k\}$, there exists a $w\in V_{out}^{\mathcal{M}'}$ such that $v_j^1\in V^{\mathcal{M}'(w)}$, by non-degeneracy of $\mathcal{M}'$, and as $u_j\in\pre(v_j^1)$, we have  $u_j \in V^{\mathcal{M}''(w)}$. This establishes non-degeneracy. 
Next,  
we observe that a clone pair in $\mathcal{M}''$ would have to consist of nodes in $\{v_1^1,v_2^1,\dots,v_{D_1}^1\}$, as a clone pair in $\mathcal{M}''$ consisting only of nodes in $\bigcup_{\ell \geq 2} V_\ell$ would also be a clone pair in $\mathcal{M}'$. Thus, by way of contradiction, suppose that $(v_{p_1}^1,v_{p_2}^1)$, $1\leq p_1<p_2\leq D_1$, is a clone pair in $\mathcal{M}''$. Then $\theta_{p_1}^1=\theta_{p_2}^1$ and $\omega_{v_{p_1}^1v_{in}}=\sum_{j=1}^{k}{q}_{p_1j}\,\omega_{v_j^1v_{in}}=\sum_{j=1}^{k}{q}_{p_2j}\,\omega_{v_j^1v_{in}}=\omega_{v_{p_2}^1v_{in}}$, so $(v_{p_1}^1,v_{p_2}^1)$ is a clone pair in $\mathcal{M}'$, which stands in contradiction to the no-clones property of $\mathcal{M}'$, and hence establishes that $\mathcal{M}''$ is clones-free.
We now revisit the constant-valued function $h_{out}(t)=\sum_{w\in V_{out}^{\mathcal{M}'}}\lambda_w \outmapT{w}{\sigma}{\mathcal{M}'}(t)=c$, for all $t\in \dom_{h_{out}}$. Examining the structure of $\mathcal{M}'$, we see that, for each $w\in V_{out}^{\mathcal{M}'}$, we can write 
\begin{equation*}
\outmapT{w}{\sigma}{\mathcal{M}'}\!(z)=F_w\!\left(\left(\outmapTNOadap{v_{p}^1}{\sigma}{\mathcal{M}'}(z)\right)_{p\hspace{0.5mm}=\hspace{0.5mm}1}^{D_1}\right), \text{ for all }z\in \dom_{\outmapT{w}{\sigma}{\mathcal{M}'}},
\end{equation*}
where $F_w$ corresponds to the map realized by the LFNN with nodes
\begin{equation}\label{k>1SharedNodes}
 V^{{\mathcal{M}'}}\setminus\{v_{in}\},
\end{equation}
inputs $\{v_1^1,\dots, v_{D_1}^1\}$, output $\{w\}$, and edges, weights, and biases inherited from $\mathcal{M}'$. As $F_w$ is the map realized by a node of a GFNN according to Definition \ref{def:NatDom}, it is holomorphic on its natural domain $\dom_{F_w}\subset \C^{D_1}$ containing $\R^{D_1}$.
We can therefore write
\begin{equation}\label{houtWithF}
h_{out}(z)=F\left(\left(\outmapTNOadap{v_{p}^1}{\sigma}{\mathcal{M}'}(z)\right)_{p\hspace{0.5mm}=\hspace{0.5mm}1}^{D_1}\right), \text{ for all }z\in \dom_{h_{out}},
\end{equation}
where $F:\dom_F\to\C$, $F=\sum_{w\in V_{out}^{\mathcal{M}'}}\lambda_w\, F_w$, is holomorphic on $\dom_F\coleqq\bigcap_{w\in V_{out}^{\mathcal{M}'}}\dom_{F_w}\supset \R^{D_1}$.

\noindent Now, by definition of natural domain, for each $w\in V_{out}^{\mathcal{M}''}$, we have
\begin{equation*}
\dom_{\outmapT{w}{\sigma}{\mathcal{M}''}}=\left\{(z_1,\dots,z_k)\in\bigcap_{p\hspace{0.5mm}=\hspace{0.5mm}1}^{D_1}\dom_{\outmapTNOadap{v_{p}^1}{\sigma}{\mathcal{M}''}}: \left(\outmapTNOadap{v_{p}^1}{\sigma}{\mathcal{M}''}(z_1,\dots,z_k)\right)_{p\hspace{0.5mm}=\hspace{0.5mm}1}^{D_1}\in \mathcal{D}_{F_w}\right\},
\end{equation*}
where the variables $z_1,\dots,z_{k}$ correspond to the input nodes $u_1,\dots,u_{k}$, respectively. Therefore, for $(z_1,\dots,z_k)$ in the open domain $\dom_{\widetilde{h}_{out}}\coleqq\bigcap_{w\in V_{out}^{\mathcal{M}''}}\dom_{\outmapT{w}{\sigma}{\mathcal{M}''}}$,
we can define the function $\widetilde{h}_{out}:\dom_{\widetilde{h}_{out}}\to\C$ according to 
\begin{equation}\label{eq:k>1-def-of-houttilde}
\widetilde{h}_{out}(z_1,\dots,z_k)=F\left(\left(\outmapTNOadap{v_{p}^1}{\sigma}{\mathcal{M}''}(z_1,\dots,z_k)\right)_{p\hspace{0.5mm}=\hspace{0.5mm}1}^{D_1}\right).
\end{equation}
Moreover, as $\mathcal{M}'$ and $\mathcal{M}''$ share the nodes in \eqref{k>1SharedNodes}, as well as the associated edges, weights, and biases, we have 
\begin{equation*}
\outmapT{w}{\sigma}{{\mathcal{M}''}}(z_1,\dots,z_k)=F_w\left(\left(\outmapTNOadap{v_{p}^1}{\sigma}{\mathcal{M}''}(z_1,\dots,z_k)\right)_{p\hspace{0.5mm}=\hspace{0.5mm}1}^{D_1}\right),
\end{equation*}
for all $w\in V_{out}^{\mathcal{M}''}$, and thus
\begin{equation*}
\widetilde{h}_{out}=\sum_{w\in V_{out}^{{\mathcal{M}''}}}\lambda_w \outmapT{w}{\sigma}{{\mathcal{M}''}}.
\end{equation*}

We are now in a position to show that, like $h_{out}$, the function $\widetilde{h}_{out}$ is constant valued. As this will be effected by an analytic continuation argument through Lemma \ref{EasyContinuation}, we first need to ensure that the relevant quantities lie in $\dom_{\widetilde{h}_{out}}$. To this end, as $\outmapTNOadap{v_{p}^1}{\sigma}{\mathcal{M}''}(z_1,\dots,z_k) \in \R$, for all $(z_1,\dots,z_k)\in \R^k$, $p\in\{1,\dots,D_1\}$, and $\dom_F$ is an open set containing $\R^{D_1}$, we can choose a small enough $\delta>0$ so that $\dom_{\widetilde{h}_{out}}\supset D^{\circ}_{k}((A,\dots,A),\delta)$. Now, fix an arbitrary $\bm{s}=(s_1,\dots,s_{k})$ in the smaller open set $C\cap D^{\circ}_{{k}}(\bm{0},\delta)$. We then have
\begin{equation*}
 (A+is_1,\dots,A+is_{k})\in D^{\circ}_{k}((A,\dots,A),\delta)\subset \dom_{\widetilde{h}_{out}},
\end{equation*}
and since
\begin{equation*}
(A+i\,r_1^{n,\bm{s}},\dots,A+i\,r_{k}^{n,\bm{s}})\to (A+is_1,\dots,A+is_{k}),
\end{equation*}
as $n\to \infty$, we obtain
\begin{equation*}
(A+i\,r_1^{n,\bm{s}},\dots,A+i\,r_{k}^{n,\bm{s}})\in \dom_{\widetilde{h}_{out}},
\end{equation*}
for large enough $n\in\N$. We may assume w.l.o.g. that this is true for all $n\in\N$ by discarding finitely many elements of the sequence $(\bm{r}^{n,\bm{s}})_{n\in\N}$.
Now, we use \eqref{eq:k>1inputFraying}, \eqref{houtWithF}, and \eqref{eq:k>1-def-of-houttilde}  
 to get
\begin{equation*}
\begin{aligned}
\widetilde{h}_{out} (A +i \,r_1^{n,\bm{s}},\dots,A +i \,r_k^{n,\bm{s}})&=F\left(\left(\outmapTNOadap{v_{p}^1}{\sigma}{{\mathcal{M}''}}(A +i \,r_1^{n,\bm{s}},\dots,A +i \,r_{k}^{n,\bm{s}})\right)_{p\hspace{0.5mm}=\hspace{0.5mm}1}^{D_1}\right)\\
&=F\left(\left(\outmapTNOadap{v_{p}^1}{\sigma}{\mathcal{M}'}(A+i\,t^{n,\bm{s}})\right)_{p\hspace{0.5mm}=\hspace{0.5mm}1}^{D_1}\right)\\
&=h_{out}(A+i\,t^{n,\bm{s}})=c,\quad \text{for all } n\in\N.
\end{aligned}
\end{equation*}
Define the set
\begin{equation*}
T=\{(A+i \,r_1^{n,\bm{s}},\dots,A+i \,r_{k}^{n,\bm{s}}):\bm{s}\in C\cap D^{\circ}_{{k}}(0,\delta),n\in\N\}
\end{equation*}
and note that
$
\clo(T)\supset \left((A,\dots,A)+(i\, C)\cap D^{\circ}_{k}(0,\delta)\right),
$
so it follows by Lemma \ref{EasyContinuation} that ${\widetilde{h}_{out}- c}\equiv 0$ everywhere in a neighborhood of $\R^k$, and thus, in particular, $\widetilde{h}_{out}\vert_{\R^k}\equiv c$.
We now repeatedly apply Lemma \ref{InputFixingLemma} to $\mathcal{M}''$, anchoring successively each of the inputs $u_1,\dots,u_{k-1}$. Observe that we will never find ourselves in the circumstance {(ii)} of Lemma \ref{InputFixingLemma}, as this would mean that we have obtained a network $\mathcal{N}\in\mathscr{M}_{min}$ with a strictly smaller number of nodes than $\mathcal{M}'$. Moreover, as the first $k$ rows of $Q$ form an identity matrix, we have 
\begin{equation*}
(v_p^1,u_j)\in E^{\mathcal{M}''}\iff q_{pj}\neq 0 \iff p=j,
\end{equation*}
for all $p,j\in\{1,\dots,k\}$. Therefore, for each $j\in\{1,\dots,k\}$, the node $v_{j}^1$ will be removed when anchoring the input $u_j$.
A concrete example of this input anchoring procedure in the case $k\geq 2$ is shown schematically in Figure \ref{fig:InFixk>1}.
\begin{figure}[h!]\centering
\includegraphics[height=40mm,angle=0]{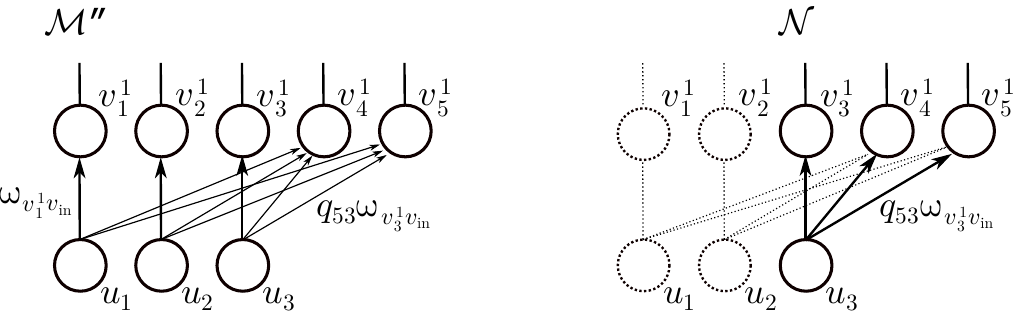}
\caption{\emph{Input anchoring. }\textit{Left:} The neural network $\mathcal{M}''$ as in Figure \ref{fig:k>1,M->tildeM}. \textit{Right:} Anchoring the inputs of $\mathcal{M}''$ at the nodes $u_1, u_2, \dots , u_{k-1}$ results in the removal of the nodes $v_1^1,v_2^1,\dots,v^1_{k-1}$ (and possibly some other nodes deeper in the network). As $k\geq 2$, the resulting network $\mathcal{N}$ has fewer nodes than the original network $\mathcal{M}'$. \label{fig:InFixk>1}}
\end{figure}
Thus, having anchored the nodes $u_1,u_2,\dots,u_{k-1}$ to appropriate real numbers $a_1,\dots,a_{k-1}$, we will be left with a non-degenerate clones-free LFNN ${\mathcal{N}}=(V^{\mathcal{N}},E^{\mathcal{N}},\{u_k\},V_{out}^{\mathcal{N}},\Omega^{\mathcal{N}},\Theta^{\mathcal{N}})$ such that the function $h_{out}^{\mathcal{N}}\coleqq\sum_{w\in V_{out}^{\mathcal{N}}}\lambda_{w} \outmapT{w}{\sigma}{\mathcal{N}}$ satisfies 
\begin{equation}\label{k>1FixedNetOut}
h_{out}^{\mathcal{N}}(t)=\widetilde{h}_{out}(a_1,\dots,a_{k-1},t)-\sum_{w\in V_{out}^{\mathcal{M}''}\setminus V_{out}^{\mathcal{N}}}\lambda_{w} \outmapT{w}{\sigma}{\mathcal{M}''}(a_1,\dots,a_{k-1},t),\quad\forall t\in\R.
\end{equation}
We have shown that the first term on the right-hand side of \eqref{k>1FixedNetOut} evaluates identically to $c$.
Moreover, as input anchoring yields networks satisfying (IA-2),  
the values $\outmapT{w}{\sigma}{\mathcal{M}''}$, for $w \in V_{out}^{\mathcal{M}''}\setminus V_{out}^{\mathcal{N}}$, are constant with respect to the input at $u_k$. Therefore the value of the sum on the right-hand side of \eqref{k>1FixedNetOut} is independent of $t$, that is, $h_{out}^{\mathcal{N}}\equiv c_{\mathcal{N}}$, for some $c_{\mathcal{N}}\in \R$. As $\lambda_w\neq 0$, for $w \in V_{out}^{\mathcal{M}''}$, it follows that $\{\outmapT{w}{\sigma}{\mathcal{N}}:w\in V_{out}^{\mathcal{N}}\}\cup\{\bm{1}\}$ is linearly dependent. 
We have thus shown that the network $\mathcal{N}$ is in $\mathscr{M}_{min}$. As $\mathcal{N}$ has strictly fewer nodes than $\mathcal{M}'$, we have established the desired contradiction and proved the theorem for $k\geq 2$.

\textit{The case $k=1$.} We have $\dim \langle \omega_{v_1^1v_{in}},\dots, \omega_{v_{D_1}^1 v_{in}}\rangle_{\Q}=1$, so we can write $\omega_{v_j^1v_{in}}=N_ja$, where $a\in\R$ and $N_j\in\Z$, for $j=1,\dots,D_1$. Moreover, by replacing $a$ with $2^{l}a$ and all $N_j$ with $N_j/2^l$ for an appropriate integer $l$, we may assume w.l.o.g. that at least one of the $N_j$ is odd. We make the following crucial observation. For all $j=1,\dots, D_1$ and $t\in\R$, we have
\begin{align}\label{FnsAlongCritLine}
\outmapTNOadap{v_{j}^1}{\sigma}{{\mathcal{M}'}}\left(t+\frac{i}{2 a}\right)&=\sigma\left(N_j a\, t+\theta_{v_{j}^1}+\frac{ N_j i}{2}\right)\notag\\
&=\begin{cases}
C+\sum_{s\in S} c_s\tanh(\pi(N_j a\, t+\theta_{v_{j}^1}-s)), \quad &N_j\text{ even}\\
C+\sum_{s\in S} c_s\coth(\pi(N_j a\, t+\theta_{v_{j}^1}-s)),  \quad &N_j\text{ odd}
\end{cases}.
\end{align}
We see that, along the line $\R+\frac{i}{2 a}$, the functions $\outmapTNOadap{v_{j}^1}{\sigma}{{\mathcal{M}'}}$ are real-valued, for all $j=1,\dots, D_1$, and, provided that $N_j$ is odd, they have poles at the points $\frac{1}{a}\left[\frac{S-{\theta_{v_j^1}}}{N_j}+\frac{i}{2}\right]$. As $S$ is self-avoiding, and at least one of the $N_j$ is odd, there exist a $j^*\in\{1,\dots, D_1\}$ and a $t^{*}\in \R+\frac{i}{2 a}$ such that $\outmapTNOadap{v_{j^*}^1}{\sigma}{{\mathcal{M}'}}$ has a pole at $t^*$, and all the other $\outmapTNOadap{v_{j}^1}{\sigma}{{\mathcal{M}'}}$, $j\in\{1,\dots, D_1\}\setminus\{j^*\}$, are analytic and real-valued at $t^*$. Let $\epsilon>0$ be such that $\outmapTNOadap{v_{j}^1}{\sigma}{{\mathcal{M}'}}$, $j\in\{1,\dots, D_1\}\setminus\{j^*\}$, are analytic on an open set containing the closed disk $D(t^*,\epsilon)$, and such that $\outmapTNOadap{v_{j^*}^1}{\sigma}{{\mathcal{M}'}}$ is analytic on the punctured disk $D(t^*,\epsilon)\setminus\{t^*\}$.  
Before embarking on the construction of $\mathcal{N}$ in the case $k=1$, we verify the following auxiliary statement:

\textit{Claim 1: We have $L(\mathcal{M}')\geq 2$ and $\{\widetilde{v}\in V_2: (v_{j^*}^1,\widetilde{v})\in E^{\mathcal{M}'}\}\neq\varnothing$.}\\ 
\noindent\textit{Proof of Claim 1.} We first show that $L(\mathcal{M}')\geq 2$. To this end, suppose by way of contradiction that $L(\mathcal{M}')=1$. Then $V_{out}^{\mathcal{M}'}= V_1$ by non-degeneracy, so the function $h_{out}=\sum_{w\in V_{out}^{\mathcal{M}'}}\lambda_w \outmapT{w}{\sigma}{\mathcal{M}'}$ can be written as
\begin{equation}\label{eq:hout-pole+anal}
h_{out}(t)=\lambda_{v_{j^*}^1}\outmapTNOadap{v_{j^*}^1}{\sigma}{\mathcal{M}'}(t)+g(t),
\end{equation}
where $g$ is analytic in an open neighborhood of $t^*$. But $\outmapTNOadap{v_{j^*}^1}{\sigma}{\mathcal{M}'}$ has a pole at $t^*$, and so $h_{out}$ has a pole at $t^*$, which stands in contradiction to $h_{out}\equiv c$, and thus establishes $L(\mathcal{M}')\geq 2$.

Next, by way of contradiction assume that $\{\widetilde{v}\in V_2: (v_{j^*}^1,\widetilde{v})\in E^{\mathcal{M}'}\}=\varnothing$. Then, by non-degeneracy of $\mathcal{M}'$, we have $v_{j^*}^1\in V_{out}^{\mathcal{M}'}$, and $\outmapT{w}{\sigma}{\mathcal{M}'}$, for $w\in V_{out}^{\mathcal{M}'}\setminus\{v_{j^*}^1\}$, are real holomorphic functions of $\big(\outmapTNOadap{v_{j}^1}{\sigma}{{\mathcal{M}'}}\big)_{j\in\{1,\dots, D_1\}\setminus\{j^*\}}$. Now, as $\outmapTNOadap{v_{j}^1}{\sigma}{{\mathcal{M}'}}$, $j\in\{1,\dots, D_1\}\setminus\{j^*\}$, are analytic and real-valued at $t^*$, the function $h_{out}$ can again be written in the form \eqref{eq:hout-pole+anal} with $g$ analytic in an open neighborhood of $t^*$. This again contradicts $h_{out}\equiv c$, and thus $\{\widetilde{v}\in V_2: (v_{j^*}^1,\widetilde{v})\in E^{\mathcal{M}'}\}\neq\varnothing$, establishing the claim.
 We can therefore enumerate the nodes $V_{2}=\{v^2_1,\dots,v^2_{d},v^2_{d+1},\dots,v^2_{D_2}\}$ so that 
\begin{itemize}[--]
\item
$v_{j^*}^1\in \bigcap_{p\hspace{0.3mm}\leq \hspace{0.3mm} d}\pre (\{v^2_p\})\setminus \bigcup_{p\hspace{0.3mm} >\hspace{0.3mm} d} \pre (\{v^2_{p}\})$, and
\item $\{\omega_{v_1^2v_{j^*}^1},\dots, \omega_{v_{\bar{k}}^2 v_{j^*}^1}\}$ is a basis for $\langle \omega_{v_{1}^2v_{j^*}^1},\dots, \omega_{v_{d}^2 v_{j^*}^1}\rangle_{\Q}$.
\end{itemize}
In particular, we have $\bar{k}=\dim\langle \omega_{v_1^2v_{j^*}^1},\dots, \omega_{v_{d}^2 v_{j^*}^1} \rangle_{\Q}$.
We will apply a similar input splitting procedure as in the case $k\geq 2$, but this time with the nodes $v_{j^*}^1$ and $v^2_1,\dots,v^2_d$ taking on the roles of $v_{in}$ and $v_{1}^1,\dots,v^1_{D_1}$. Specifically, we will use the pole of $\outmapTNOadap{v_{j^*}^1}{\sigma}{\mathcal{M}'}$ at $t^*$ to obtain sequences $(t^{n,\bm{s}})_{n\in\N}$ and $(\bm{r}^{n,\bm{s}})_{n\in\N}$ according to Lemma \ref{MainTorusLemma}, that is to say, we will ``split the non-input node'' $v_{j^*}^1$ of $\mathcal{M}'$ into input nodes of the new network $\mathcal{M}''$ to be constructed. We remark that  
the outputs of $v^2_1,\dots,v^2_d$ depend on $\outmapTNOadap{v_{j^*}^1}{\sigma}{\mathcal{M}'}$, which, in turn, is a function of the input variables. This ``extra level of separation'' will cause the construction of $\mathcal{M}''$ to be more involved in the case $k=1$ than it was in the case $k\geq 2$.
 
 In order to motivate the construction of $\mathcal{M}''$ in the case $k=1$, we will carry out a calculation analogous to \eqref{1:SmallCalc}--\eqref{3:SmallCalc}. We begin by determining a $B\in\R$ such that none of the functions 
\begin{equation}\label{NoSingularEnsured}
\outmapTNOadap{v_{p}^2}{\sigma}{\mathcal {M}'}(z)=\sigma \left(\omega_{v^2_pv^1_{j^*}}\outmapTNOadap{v_{j^*}^1}{\sigma}{\mathcal{M}'}(z) +f_p(z) +\theta_{v_p^2} \right),\quad z\in\dom_{v_p^2},
\end{equation}
for $p\in\{1,\dots, d\}$, have singularities in the set $\mathcal{L}_B\coleqq \{z\in D(t^*,\epsilon):\outmapTNOadap{v_{j^*}^1}{\sigma}{\mathcal{M}'}(z)\in B+i\,\R\}$, where the functions $f_p:\mathcal{D}_{f_p}\to \C$, for $p\in\{1,\dots, d\}$, are defined according to
\begin{equation}\label{eq:def-of-fp}
f_p(z)= \sum_{j\in\{1,\dots,D_1\}\setminus\{j^*\}}\omega_{v^2_pv^1_j}\outmapTNOadap{v^1_j}{\sigma}{\mathcal{M}'}(z),\quad z\in\dom_{f_p}. 
\end{equation}
When $D_1=1$, the functions $f_p$ are all identically zero.
For given $p\in\{1,\dots , d\}$, $z\in \mathcal{L}_B$ is a singularity of $\outmapTNOadap{v_{p}^2}{\sigma}{\mathcal {M}'}$ if and only if $z$ is an element of $D(t^*,\epsilon)$ such that 
\begin{equation*}
\outmapTNOadap{v_{j^*}^1}{\sigma}{\mathcal{M}'}(z) \in (B+i\R)\cap \frac{P-f_p(z)-\theta_{v^2_p}}{\omega_{v^2_pv^1_{j^*}}},
\end{equation*} where $P$ is the set of poles of $\sigma$, expressed in terms of $S$ by \eqref{eq:PolesSigma}. But
\begin{equation*}
\frac{P-f_p(z)-\theta_{v^2_p}}{\omega_{v^2_pv^1_{j^*}}}\subset  \frac{S-\mathrm{Re}[f_p(D(t^*,\epsilon))]-\theta_{v_p^2}}{\omega_{v_p^2v_{j^*}^1}}+i\R,
\end{equation*}
for all $z\in D(t^*,\epsilon)$, so it suffices to ensure that
\begin{equation}\label{ChoiceOfB}
B\notin \bigcup_{p\hspace{0.5mm}=\hspace{0.5mm}1}^d \frac{S-\mathrm{Re}[f_p(D(t^*,\epsilon))]-\theta_{v_p^2}^2}{\omega_{v_p^2v_{j^*}^1}}.
\end{equation}
Next, let
\begin{equation*}
\eta(\epsilon)=\sup_{1\,\leq \,p\,\leq \,d}\,\sup_{z\in D(t^*,\epsilon)}|f_p(z)-f_p(t^*)|
\end{equation*} and note that, as $f_p$, $p=1,\dots,d$, are continuous in a neighborhood of $t^*$, we have $\eta(\epsilon)\to 0$ as $\epsilon \to 0$.
Let $\mathrm{Leb}$ denote the Lebesgue measure on $\R$. We then have
\begin{equation*}
\begin{aligned}
&\mathrm{Leb}\left\{ [0,1]\cap \bigcup_{p\hspace{0.5mm}=\hspace{0.5mm}1}^d \frac{S-\mathrm{Re}[f_p(D(t^*,\epsilon))]-\theta_{v_p^2}^2}{\omega_{v_p^2v_{j^*}^1}}\right\}\\
\leq &\sum_{p\hspace{0.5mm}=\hspace{0.5mm}1}^{d} \frac{2\eta(\epsilon)}{|\omega_{v_p^2v_{j^*}^1}|} \cdot \#\left\{ [0,1]\cap  \frac{S-\theta_{v_p^2}^2}{\omega_{v_p^2v_{j^*}^1}}\right\}< 1
\end{aligned}
\end{equation*}
for small enough values of $\epsilon$. Therefore, by choosing a sufficiently small $\epsilon$, we can ensure that there exists a $B\in[0,1]$ such that \eqref{ChoiceOfB} holds, as desired.
 Now, write $(\omega_{v_p^2v_{j^*}^1})_{p\hspace{0.5mm}=\hspace{0.5mm}1}^{d}=\bar{Q}\cdot (\omega_{v_p^2v_{j^*}^1})_{p\hspace{0.5mm}=\hspace{0.5mm}1}^{\bar{k}}$, where $\bar{Q}=(\bar{q}_{pj})_{p,j}\in\Q^{d\times \bar{k}}$ is a rational matrix whose first $\bar{k}$ rows form a $\bar{k}\times\bar{k}$ identity matrix.
Let $C\subset \R^{\bar k}$ be a set satisfying the conclusion of Lemma \ref{MainTorusLemma} applied with $\alpha_p=\omega_{v_p^2v_{j^*}^1}$, $p=1,\dots,\bar{k}$.

Given an arbitrary $\bm{s}=(s_1,s_2,\dots, s_{\bar{k}})\in C$, Lemma \ref{MainTorusLemma} yields sequences $(t^{n,\bm{s}})_{n\in\N}\subset \R$, $(\bm{r}^{n,\bm{s}})_{n\in\N}\subset{C}$ such that
\begin{align}
&(\omega_{v_{1}^2v_{j^*}^1}t^{n,\bm{s}},\dots,\omega_{v_{d}^2v_{j^*}^1}t^{n,\bm{s}})+\Z^d= \bar{Q}\cdot \left(\omega_{v_{1}^2v_{j^*}^1}{r_1^{n,\bm{s}}},\dots, \omega_{v_{\bar{k}}^2v_{j^*}^1}{r_{\bar k}^{n,\bm{s}}}\right)+\Z^d,\;\forall n\in\N,\label{k=1TwoToriSeq}\\
&|t^{n,\bm{s}}|\to \infty\quad\text{ as }n\to\infty,\\
&\bm{r}^{n,\bm{s}}\to\bm{s}\quad\text{ as }n\to\infty.
\end{align}
As $\outmapTNOadap{v_{j^*}^1}{\sigma}{\mathcal{M}'}$ is analytic on the punctured disk $D(t^*,\epsilon)\setminus\{t^*\}$ and its singularity at $t^*$ is a pole, it follows that the reciprocal $1/\outmapTNOadap{v_{j^*}^1}{\sigma}{\mathcal{M}'}$ is holomorphic on $D(t^*,\epsilon)$ with a zero at $t^*$. Thus, by the complex open mapping theorem \cite[Thm. 10.32]{Rudin1987} applied to $1/\outmapTNOadap{v_{j^*}^1}{\sigma}{\mathcal{M}'}$, there exists a $\delta>0$ such that, for every $y\in D(0,\delta)$, there is a $z_y\in D(t^*,\epsilon)$ with $1/\outmapTNOadap{v_{j^*}^1}{\sigma}{\mathcal{M}'}(z_y)=y$. Now, since $|t^{n,\bm{s}}|\to\infty$, we also have $|B+ i\, t^{n,\bm{s}}|\to \infty$, so it follows that there exists a sequence $(z^{n,\bm{s}})_{n\in\N}$ in $D(t^*,\epsilon)\setminus\{t^*\}$ with $z^{n,\bm{s}}\to t^*$, such that $\outmapTNOadap{v_{j^*}^1}{\sigma}{\mathcal{M}'}(z^{n,\bm{s}})=B+ i\,  t^{n,\bm{s}}$ (a finite number of elements of the sequence $(t^{n,\bm{s}})_{n\in\N}$ may need to be discarded to ensure that $(z^{n,\bm{s}})_{n\in\N}$ is, indeed, contained in $D(t^*,\epsilon)\setminus\{t^*\}$). 
Now, for $p\in\{1,\dots,d\}$, compute
\begin{align}
&\sigma\left(\omega_{v_p^2v_{j^*}^1}\outmapTNOadap{v_{j^*}^1}{\sigma}{\mathcal{M}'}(z^{n,\bm{s}})+f_p(z^{n,\bm{s}})+{\theta}_{v_p^2}\right)\notag\\
=\,&\sigma\left(\omega_{v_p^2v_{j^*}^1}(B+ i\, t^{n,\bm{s}})+f_p(z^{n,\bm{s}})+{\theta}_{v_p^2}\right)\label{0:BigCalc}\\
=\,&\sigma\left(\omega_{v_p^2v_{j^*}^1} B  + i(\omega_{v_p^2v_{j^*}^1} t^{n,\bm{s}}+\Z)+f_p(z^{n,\bm{s}})+{{\theta}}_{v_p^2}\right)\label{1:BigCalc}\\
=\,&\sigma\left(\omega_{v_p^2v_{j^*}^1} B + i\cdot\left(\sum_{j=1}^{\bar k} {\bar q}_{pj}\,\omega_{v_{j}^2v_{j^*}^1}r_j^{n,\bm{s}}+\Z\right) + f_p(z^{n,\bm{s}})+{{\theta}}_{v_p^2}\right)\label{2:BigCalc}\\
=\,&\sigma\left(\sum_{j=1}^{\bar k} {\bar q}_{pj}\,\omega_{v_{j}^2v_{j^*}^1}(B +i \,r_j^{n,\bm{s}}) +f_p(z^{n,\bm{s}})+{{\theta}}_{v_p^2}\right)\label{3:BigCalc},
\end{align}
where in \eqref{0:BigCalc} we used the definition of $z^{n,\bm{s}}$, in \eqref{1:BigCalc} we used the $i$-periodicity of $\sigma$, in \eqref{2:BigCalc} we used \eqref{k=1TwoToriSeq}, and in \eqref{3:BigCalc} we used $\omega_{v_p^2v_{j^*}^1}=\sum_{j=1}^{\bar k} {\bar q}_{pj}\,\omega_{v_{j}^2v_{j^*}^1}$ and the $i$-periodicity of $\sigma$ again. As $B$ was chosen so that the functions \eqref{NoSingularEnsured} do not have singularities in $\mathcal{L}_B$, all the quantities in the calculation \eqref{0:BigCalc}--\eqref{3:BigCalc} are well-defined.

Motivated by \eqref{0:BigCalc}--\eqref{3:BigCalc}, we construct a GFNN ${\mathcal{M}''}=(V^{{\mathcal{M}''}},E^{{\mathcal{M}''}},V_{in}^{{\mathcal{M}''}},V_{out}^{{\mathcal{M}''}},\Omega^{{\mathcal{M}''}},\Theta^{{\mathcal{M}''}})$ as follows
\begin{itemize}[--]
\item First, $\bar{k}$ new nodes are created and enumerated as $\{u_1,\dots,u_{\bar k}\}$. Now, if $D_1>1$, then let $V_{in}^{{\mathcal{M}''}} = \{v_{in},u_1,\dots,u_{\bar k}\}$, and if $D_1=1$, set $V_{in}^{{\mathcal{M}''}} = \{u_1,\dots,u_{\bar k}\}$.
\item $V^{{\mathcal{M}''}}:=V_{in}^{{\mathcal{M}''}} \cup (V_1 \setminus \{v_{j^*}^1\})\cup \bigcup_{\ell\geq 2}V_\ell$.
\item  
${E^{{\mathcal{M}''}} \coleqq  \{(v,\widetilde{v})\in  E^{\mathcal{M}}: \; \lvl(v)\geq 2 \}\cup \{(v_j^1,v_p^2):j\in\{1,\dots, D_1\}\setminus\{j^*\},p\in\{1,\dots,D_2\}\}}\break\hspace*{2cm}{\cup \, \{(u_j,v_{p}^2):p\in\{1,\dots,d\},\, j\in\{1,\dots,\bar{k}\},\, {\bar q}_{pj}\neq 0\}},$
 
\item $V_{out}^{{\mathcal{M}''}}\coleqq V_{out}^{{\mathcal{M}'}}\setminus\{v_{j^*}^1\}$,
\item define $\omega_{v_p^2u_j}:={\bar q}_{pj}\,\omega_{v_j^2v_1^1}$, for $p=1,\dots, d$, $j=1,\dots, {\bar k}$, and let \\
 
$\Omega^{{\mathcal{M}''}} := {\{\omega_{\widetilde{v}v}\in\Omega^{\mathcal{M}'}: \; \lvl(v)\geq 2 \}\cup \{\omega_{v_p^2v_j^1}:j\in\{1,\dots, D_1\}\setminus\{j^*\},p\in\{1,\dots,D_2\}\}}\\
\hspace*{2cm} {\cup \{\omega_{v_p^2u_j}: p\in\{1,\dots,d\},\, j\in\{1,\dots,\bar{k}\},\, {\bar q}_{pj}\neq 0\}},$
 
\item let\;
 
$
\Theta^{{\mathcal{M}''}}:= \{\theta_{v}\in\Theta^{\mathcal{M}'}:\; \lvl(v)\geq 2\}\cup \{\theta_{v_j^1}:j\in\{1,\dots, D_1\}\setminus\{j^*\}\}
$.
 
\end{itemize}
The construction of $\mathcal{M}''$ for a concrete $\mathcal{M}'$ is illustrated in Figure \ref{fig:k=1,M->tildeM}. Note that ${\mathcal{M}''}$ is not layered in the case $D_1>1$, due to the presence of the node $v_{in}$.
Owing to \eqref{0:BigCalc}--\eqref{3:BigCalc} and the construction of ${\mathcal{M}''}$, we have the following ``input splitting'' relationship:
\begin{equation}\label{eq:k=1inputFraying}
\outmapTNOadap{v_{p}^2}{\sigma}{\mathcal{M}'}(z^{n,\bm{s}})=
\begin{cases}
\outmapTNOadap{v_{p}^2}{\sigma}{{\mathcal{M}''}}(z^{n,\bm{s}},B +i \,r_1^{n,\bm{s}},\dots,B +i \,r_{\bar k}^{n,\bm{s}}),&\text{if }D_1>1\\
\outmapTNOadap{v_{p}^2}{\sigma}{{\mathcal{M}''}}(B +i \,r_1^{n,\bm{s}},\dots,B +i \,r_{\bar k}^{n,\bm{s}}),&\text{if }D_1=1\\
\end{cases},
\end{equation}
for $p\in\{1,\dots,d\}$.
\begin{figure}[h!]\centering
\includegraphics[height=50mm,angle=0]{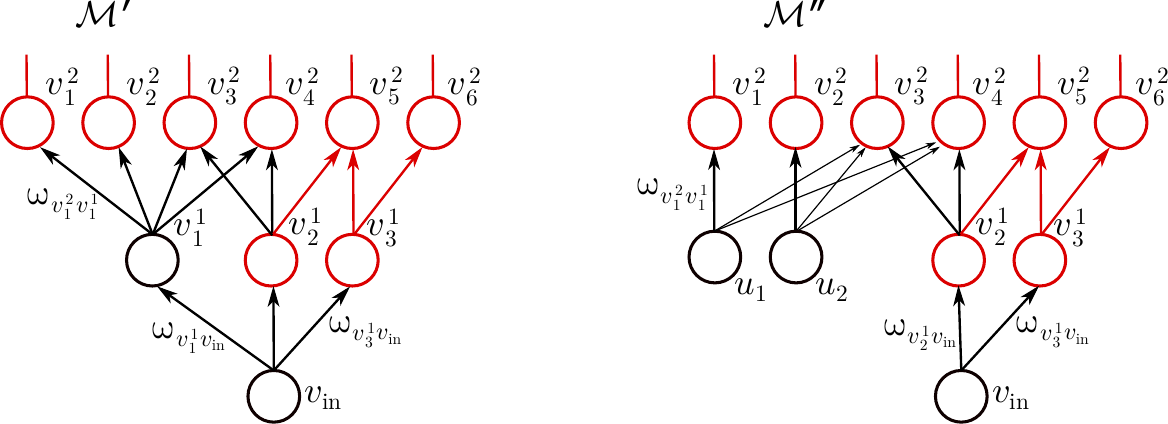}
\caption{\emph{Input splitting, case $k=1$.} \textit{Left:} A neural network $\mathcal{M}'$, assumed to be a minimal element of $\mathscr{M}$, with $D_1=3$, $D_2=6$, and $d=4$. \textit{Right:} The corresponding network $\mathcal{M}''$, assuming $j^*=1$ and $\bar k=2$. The function $H$ in \eqref{houtWithG} and \eqref{eq:k=1-def-of-houttilde} corresponds to the map realized by the shared part (in red) of $\mathcal{M}'$ and $\mathcal{M}''$. \label{fig:k=1,M->tildeM}}  
\end{figure}
We next show that ${\mathcal{M}''}$ is non-degenerate and clones-free. To establish non-degeneracy, it suffices to show $V_{in}^{{\mathcal{M}''}}\subset \bigcup_{w\in V_{out}^{{\mathcal{M}''}}}V^{\mathcal{M}''(w)}$. First note that, in both cases $D_1=1$ and $D_1>1$, for a given $j\in\{1,\dots,\bar k\}$, there exists a $w\in V_{out}^{{\mathcal{M}'}}\setminus\{v_{j^*}^1\}$ such that $v_j^2\in V^{\mathcal{M}'(w)}$, by non-degeneracy of $\mathcal{M}'$. It follows that $v_j^2\in V^{{\mathcal{M}''}(w)}$ and thus $u_j\in V^{{\mathcal{M}''}(w)}$. As $j$ was arbitrary, we have $\{u_1,\dots,u_{\bar k}\}\subset \bigcup_{w\in V_{out}^{{\mathcal{M}''}}}V^{\mathcal{M}''(w)}$, which establishes non-degeneracy of $\mathcal{M}''$ in the case $D_1=1$. For $D_1>1$ we need to additionally show that $v_{in}\in V^{{\mathcal{M}''}(w)}$. To this end, note that there exist an $m^*\in\{1,\dots,D_1\}\setminus\{j^*\}$ and a $w\in V_{out}^{{\mathcal{M}'}}\setminus\{v_{j^*}^1\}$ such that $v_{m^*}^1\in V^{\mathcal{M}'(w)}$, and so $v_{in}\in V^{{\mathcal{M}''}(w)}$, as desired.
The clones-free property of ${\mathcal{M}''}$ follows by the same argument as in the case $k\geq 2$.
  
Once again, we revisit the function $h_{out}(t)=\sum_{w\in V_{out}^{\mathcal{M}'}}\lambda_w \outmapT{w}{\sigma}{\mathcal{M}'}(t)=c$, for all $t\in \dom_{h_{out}}$, and proceed in a similar fashion as in the case $k\geq 2$. This time, however,
the output sets $V^{\mathcal{M}'}_{out}$ and $V^{\mathcal{M}''}_{out}$ may differ by the node $v_{j^*}^1$. This is a nuisance that will be dealt with below in Claim 2, but in the meantime, it is convenient to introduce the ``truncated'' linear dependency function 
\begin{equation}\label{eq:defofhtr}
h_{tr}\coleqq\sum_{w\in V_{out}^{\mathcal{M}'}\setminus\{v_{j^*}^1\}}\lambda_w \outmapT{w}{\sigma}{\mathcal{M}'},
\end{equation} and proceed exactly as in the case $k\geq 2$.
By examining the structure of $\mathcal{M}'$, we see that, for each $w\in V_{out}^{\mathcal{M}'}\setminus\{v^1_{j^*}\}$, we can write
\begin{equation*}
 \outmapT{w}{\sigma}{\mathcal{M}'}(z)=H_w\left(\left(\outmapTNOadap{v_{p}^2}{\sigma}{\mathcal{M}'}(z)\right)_{p\hspace{0.5mm}=\hspace{0.5mm}1}^d,\left(\outmapTNOadap{v_j^1}{\sigma}{\mathcal{M}'}(z)\right)_{j\in\{1,\dots,D_1\}\setminus\{j^*\}}\right), \quad \forall z\in\dom_{\outmapT{w}{\sigma}{\mathcal{M}'}},
\end{equation*}
where $H_w:\dom_{H_w}\to \C$ corresponds to the map realized by the GFNN with nodes
\begin{equation}\label{eq:k=1SharedNodes}
V^{{ \mathcal{M}}}\setminus \{v_{in},v^1_{j^*}\} ,
\end{equation}
inputs $\{v^2_p\}_{p\hspace{0.5mm}=\hspace{0.5mm}1}^d\cup\{v^1_j\}_{j\in\{1,\dots,D_1\}\setminus\{j^*\}}$, single output $\{w\}$, and edges, weights, and biases inherited from $\mathcal{M}'$.
The function $H_w:\dom_{H_w}\to \C$ is holomorphic on its natural domain $\dom_{H_w}\subset \C^{d+(D_1-1)}$ containing $\R^{d+(D_1-1)}$. We can therefore write
\begin{equation}\label{houtWithG}
h_{tr}(z)=H\left(\left(\outmapTNOadap{v_{p}^2}{\sigma}{\mathcal{M}'}(z)\right)_{p\hspace{0.5mm}=\hspace{0.5mm}1}^d,\left(\outmapTNOadap{v_j^1}{\sigma}{\mathcal{M}'}(z)\right)_{j\in\{1,\dots,D_1\}\setminus\{j^*\}}\right),\quad\forall z\in \dom_{h_{out}},
\end{equation}
where $H:\dom_H\to\C$, $H=\sum_{w\in V_{out}^{\mathcal{M}'}\setminus \{v^1_{j^*}\} }\lambda_w\, H_w$, is holomorphic on $\dom_H=\bigcap_{w\in V_{out}^{\mathcal{M}'}\setminus \{v^1_{j^*}\}}\dom_{H_w}\supset \R^{d+(D_1-1)}$.

Now, by definition of natural domain, for each $w\in V_{out}^{\mathcal{M}''}$, the natural domain $\dom_{\outmapT{w}{\sigma}{\mathcal{M}''}}$ is the set of all $\bm{z}\in \bigcap_{p=1}^{d}\dom_{\outmapTNOadap{v_{p}^2}{\sigma}{\mathcal{M}''}}\cap  \bigcap_{j\neq j^*}\dom_{\outmapTNOadap{v_{j}^1}{\sigma}{\mathcal{M}''}}$ such that
\begin{equation*}
\left(\left(\outmapTNOadap{v_{p}^2}{\sigma}{\mathcal{M}''}(\bm{z})\right)_{p\hspace{0.5mm}=\hspace{0.5mm}1}^d,\left(\outmapTNOadap{v_j^1}{\sigma}{\mathcal{M}''}(\bm{z})\right)_{j\in\{1,\dots,D_1\}\setminus\{j^*\}}\right)\in \dom_{H_w},
\end{equation*}
where the variable $\bm{z}=(z_0,z_1,\dots,z_{\bar k})$ corresponds to the input nodes $v_{in},u_1,\dots,u_{\bar k}$, in the case $D_1>1$, and $\bm{z}=(z_1,\dots,z_{\bar k})$ corresponds to the input nodes $u_1,\dots,u_{\bar k}$, in the case $D_1=1$.
Therefore, for $\bm{z}$ in the open domain $\dom_{\widetilde{h}_{out}}\coleqq\bigcap_{w\in V_{out}^{\mathcal{M}''}}\dom_{\outmapT{w}{\sigma}{\mathcal{M}''}}$, we can define the function $\widetilde{h}_{out}:\dom_{\widetilde{h}_{out}}\to\C$ according to 
\begin{equation}\label{eq:k=1-def-of-houttilde}
\widetilde{h}_{out}(\bm{z})=H\!\left(\left(\outmapTNOadap{v_{p}^2}{\sigma}{{\mathcal{M}''}}(\bm{z})\right)_{p\hspace{0.5mm}=\hspace{0.5mm}1}^d,\left(\outmapTNOadap{v_j^1}{\sigma}{{\mathcal{M}''}}(\bm{z})\right)_{j\in\{1,\dots,D_1\}\setminus\{j^*\}}\right).
\end{equation}
\noindent Moreover, as $\mathcal{M}'$ and $\mathcal{M}''$ share the nodes in \eqref{eq:k=1SharedNodes}, as well as the associated edges, weights, and biases, we have 
\begin{equation*}
\outmapT{w}{\sigma}{{\mathcal{M}''}}(\bm{z})=H_w\left(\left(\outmapTNOadap{v_{p}^2}{\sigma}{\mathcal{M}''}(\bm{z})\right)_{p\hspace{0.5mm}=\hspace{0.5mm}1}^d,\left(\outmapTNOadap{v_j^1}{\sigma}{\mathcal{M}''}(\bm{z})\right)_{j\in\{1,\dots,D_1\}\setminus\{j^*\}}\right)
\end{equation*}
for all $w\in V_{out}^{\mathcal{M}''}$, and thus
\begin{equation*}
\widetilde{h}_{out}=\sum_{w\in V_{out}^{{\mathcal{M}''}}}\lambda_w \outmapT{w}{\sigma}{{\mathcal{M}''}}.
\end{equation*}
At this point we verify another auxiliary claim, which states that $h_{tr}$ and $h_{out}$ are always, in fact, the same function, and therefore $\tilde{h}_{out}\equiv c$ follows by a similar argument as in the case $k\geq 2$.

\textit{Claim 2: Recall that $t^*\in\R+\frac{i}{2a}$ is such that $\outmapTNOadap{v_{j^*}^1}{\sigma}{{\mathcal{M}'}}$ has a pole at $t^*$, and all the other $\outmapTNOadap{v_{j}^1}{\sigma}{{\mathcal{M}'}}$, $j\in\{1,\dots, D_1\}\setminus\{j^*\}$, are analytic and real-valued at $t^*$. Further recall the open set $C\subset \R^{\bar k}$ containing $\bm{0}$. We have $\{t^*\}\times\R^{\bar{k}}\subset \dom_{\widetilde{h}_{out}}$ and $\widetilde{h}_{out}\vert_{\R^{\bar k +1}}\equiv c$, in the case $D_1>1$, and $\R^{\bar k}\subset  \dom_{\widetilde{h}_{out}}$ and $\widetilde{h}_{out}\vert_{\R^{\bar k }}\equiv c$, in the case $D_1=1$. Moreover, in both cases we have $v_{j^*}^1\notin V_{out}^{\mathcal{M}'}$.}
\noindent\textit{Proof of Claim 2.} First assume that $D_1>1$. To show that $\{t^*\}\times\R^{\bar{k}}\subset \dom_{\widetilde{h}_{out}}$, first observe that, for $j\in\{1,\dots,D_1\}\setminus\{j^*\}$ and $(z_1,\dots, z_{\bar k})\in\R^{\bar k}$, we have $\outmapTNOadap{v_j^1}{\sigma}{\mathcal{M}''}(t^*,z_1,\dots, z_{\bar{k}})=\outmapTNOadap{v_j^1}{\sigma}{\mathcal{M}'}(t^*)$, which, by \eqref{FnsAlongCritLine}, is a real number. By \eqref{eq:def-of-fp}, this further implies $f_p(t^*)\in \R$, for $p=1,\dots,d$. Therefore
\begin{equation*}
\begin{aligned}
\outmapTNOadap{v_{p}^2}{\sigma}{{\mathcal{M}''}}\,(t^*,z_1,\dots,z_{\bar{k}})&=\sigma\left(\sum_{j=1}^{\bar k} {\bar q}_{pj}\,\omega_{v_{j}^2v_{j^*}^1}z_j +f_p(t^*)+{{\theta}}_{v_p^2}\right)\in \R, 
\end{aligned}
\end{equation*}
for $p\in\{1,\dots,d\}$ and $(z_1,\dots,z_{\bar k})\in\R^{\bar{k}}$. As $\R^{d+(D_1-1)}\subset \dom_H$, we deduce that
\begin{equation*}
\left(\left(\outmapTNOadap{v_{p}^2}{\sigma}{\mathcal{M}''}(\bm{z})\right)_{p\hspace{0.5mm}=\hspace{0.5mm}1}^d,\left(\outmapTNOadap{v_j^1}{\sigma}{\mathcal{M}''}(\bm{z})\right)_{j\in\{1,\dots,D_1\}\setminus\{j^*\}}\right)\in \mathcal{D}_{H}, \text{ for all }\bm{z}\in \{t^*\}\times\R^{\bar{k}}.
\end{equation*}
This establishes $\{t^*\}\times\R^{\bar{k}}\subset \dom_{\widetilde{h}_{out}}$. We proceed to showing $\widetilde{h}_{out}\vert_{\R^{\bar k +1}}\equiv c$.
As $\dom_{\widetilde{h}_{out}}$ is open, it follows that $\dom_{\widetilde{h}_{out}}\supset \mathcal{U}$, for some connected open $\mathcal{U}\subset \C^{1+\bar k}$ containing $\{t^*\}\times\R^{\bar{k}}$. Choose a small enough $\delta>0$ so that $\mathcal{U}\supset D^{\circ}_1(t^*,\delta)\times D^{\circ}_{\bar{k}}((B,\dots,B),\delta)$.
Now, fix an arbitrary $\bm{s}=(s_1,\dots,s_{\bar k})$ in the smaller open set $C\cap D^{\circ}_{\bar{k}}(\bm{0},\delta)$. We then have
\begin{equation*}
 (t^*,B+is_1,\dots,B+is_{\bar{k}})\in D^{\circ}_1(t^*,\delta)\times D^{\circ}_{\bar{k}}((B,\dots,B),\delta)\subset \mathcal{U}\subset \dom_{\widetilde{h}_{out}},
\end{equation*}
and since
\begin{equation*}
(z^{n,\bm{s}},B+i\,r_1^{n,\bm{s}},\dots,B+i\,r_{\bar{k}}^{n,\bm{s}})\to (t^*,B+is_1,\dots,B+is_{\bar{k}}),
\end{equation*}
as $n\to \infty$, we obtain
\begin{equation*}
(z^{n,\bm{s}},B+i\,r_1^{n,\bm{s}},\dots,B+i\,r_{\bar{k}}^{n,\bm{s}})\in \dom_{\widetilde{h}_{out}},
\end{equation*}
for large enough $n\in\N$. We may again assume w.l.o.g. that this is true for all $n\in\N$ by discarding finitely many elements of the sequences $(z^{n,\bm{s}})_{n\in\N}$ and $(\bm{r}^{n,\bm{s}})_{n\in\N}$.
 Now, we use \eqref{eq:k=1inputFraying}, \eqref{houtWithG}, and \eqref{eq:k=1-def-of-houttilde}  
 to get
\begin{equation}\label{eq:k=1htildeout=htr}
\widetilde{h}_{out}(z^{n,\bm{s}},B+i \,r_1^{n,\bm{s}},\dots,B+i \,r_{\bar k}^{n,\bm{s}})=h_{tr}(z^{n,\bm{s}}),\quad\forall\in\N,
\end{equation}
for all $\bm{s}\in C\cap D^{\circ}_{\bar{k}}(\bm{0},\delta)$.
We are now ready to show that $v_{j^*}^1\notin V_{out}^{\mathcal{M}'}$ (still in the case $D_1>1$). To this end, suppose by way of contradiction that $v_{j^*}^1\in V_{out}^{\mathcal{M}'}$ and set $\bm{s}=\bm{0}$. Note that $\widetilde{h}_{out}(t^*,B,\dots,B)$ is a well-defined (finite) complex number, simply as $(t^*,B,\dots,B)\in\{t^*\}\times\R^{\bar{k}}\subset \dom_{\widetilde{h}_{out}}$. Thus, by \eqref{eq:defofhtr} and \eqref{eq:k=1htildeout=htr}, we have
\begin{equation*}
\begin{aligned}
\outmapTNOadap{v_{j^*}^1}{\sigma}{\mathcal{M}'}(z^{n,\bm{0}})&=c-h_{out}(z^{n,\bm{0}})+\outmapTNOadap{v_{j^*}^1}{\sigma}{\mathcal{M}'}(z^{n,\bm{0}})\\
&=c-h_{tr}(z^{n,\bm{0}})\\
&=c-\widetilde{h}_{out}(z^{n,\bm{0}},B+i \,r_1^{n,\bm{0}},\dots,B+i \,r_{\bar k}^{n,\bm{0}})\\
&\rightarrow c - \widetilde{h}_{out}(t^*,B,\dots,B)
\end{aligned}
\end{equation*} 
as $n\to\infty$, which contradicts the fact that $\outmapTNOadap{v_{j^*}^1}{\sigma}{\mathcal{M}'}$ has a pole at $t^*$. This establishes $v_{j^*}^1\notin V_{out}^{\mathcal{M}'}$. As a consequence we further have $h_{tr}=h_{out}$, and so \eqref{eq:k=1htildeout=htr} reads
\begin{equation*}
\widetilde{h}_{out}(z^{n,\bm{s}},B+i \,r_1^{n,\bm{s}},\dots,B+i \,r_{\bar k}^{n,\bm{s}})=h_{out}(z^{n,\bm{s}})=c,\quad\forall n\in\N,
\end{equation*}
for all $\bm{s}\in C\cap D^{\circ}_{\bar{k}}(0,\delta)$. Now, define the set
\begin{equation*}
\widetilde{T}=\{(z^{n,\bm{s}},B+i \,r_1^{n,\bm{s}},\dots,B+i \,r_{\bar k}^{n,\bm{s}}):\bm{s}\in C\cap D^{\circ}_{\bar{k}}(0,\delta),n\in\N\}.
\end{equation*}
Note that $\widetilde{T}$ satisfies
\begin{equation*}
\begin{aligned}
\widetilde{T}&\subset ( D(t^*,\epsilon)\setminus\{t^*\})\times \C^{\bar k}\quad\text{and}\\
\clo(\widetilde{T}) &\supset \{t^*\}\times \left((B,\dots,B)+(i\, C)\cap D^{\circ}_{\bar{k}}(0,\delta)\right),
\end{aligned}
\end{equation*}
so by Lemma \ref{TrickyContinuation}, it follows that $\widetilde{h}_{out}-c\equiv 0$ everywhere in an open neighborhood of $\R^{\bar k +1}$, and thus $\widetilde{h}_{out}\vert_{\R^{\bar k +1}}\equiv c$ in particular. This establishes Claim 2 in the case $D_1>1$.
It remains to prove the claim for $D_1=1$. Showing that $\R^{\bar k}\subset  \dom_{\widetilde{h}_{out}}$ is fully analogous to showing $\{t^*\}\times \R^{\bar k}\subset  \dom_{\widetilde{h}_{out}}$ in the case $D_1>1$. We can hence proceed to establishing $\widetilde{h}_{out}\vert_{\R^{\bar k }}\equiv c$. To this end, we first note that there is a connected open set $\mathcal{U}$ and a $\delta>0$ such that $\R^{\bar k}\subset\mathcal{U}\subset  \dom_{\widetilde{h}_{out}}$ and $ D^{\circ}_{\bar{k}}((B,\dots,B),\delta)\subset\mathcal{U}$, and we similarly obtain
\begin{equation*}
(B+i\,r_1^{n,\bm{s}},\dots,B+i\,r_{\bar{k}}^{n,\bm{s}})\in \dom_{\widetilde{h}_{out}},
\end{equation*}
for all $n\in\N$ and $\bm{s}\in C\cap D^{\circ}_{\bar{k}}(0,\delta)$. Again, showing $v_{j^*}^1\notin V_{out}^{\mathcal{M}'}$ now proceeds in a manner entirely analogous to the case $D_1>1$, as does obtaining the identity
\begin{equation*}
\widetilde{h}_{out}(B+i \,r_1^{n,\bm{s}},\dots,B+i \,r_{\bar k}^{n,\bm{s}})=h_{out}(z^{n,\bm{s}})=c,\quad\forall n\in\N,
\end{equation*}
for all $\bm{s}\in C\cap D^{\circ}_{\bar{k}}(0,\delta)$.
Now, define the set
\begin{equation*}
T=\{(B+i \,r_1^{n,\bm{s}},\dots,B+i \,r_{\bar k}^{n,\bm{s}}):\bm{s}\in C\cap D^{\circ}_{\bar{k}}(0,\delta),n\in\N\}.
\end{equation*}
Note that $T$ satisfies
$
\clo(T)\supset \left((B,\dots,B)+(i\, C)\cap D^{\circ}_{\bar k}(0,\delta)\right)
$, so, by Lemma \ref{EasyContinuation}, we have $\widetilde{h}_{out}\equiv c$ everywhere in an open neighborhood of $\R^{\bar k}$, which concludes the proof of Claim 2.
\begin{figure}[h!]\centering
\includegraphics[height=50mm,angle=0]{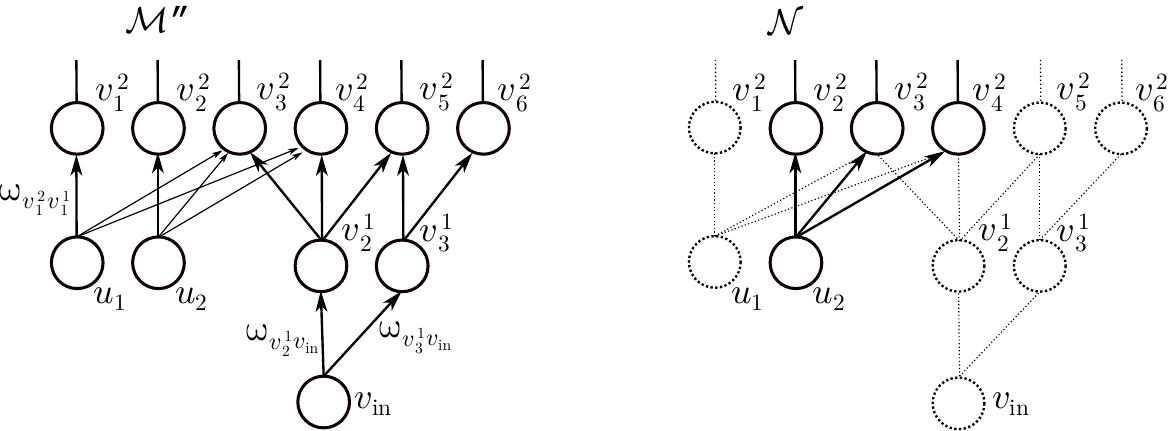}
\caption{\emph{Input anchoring. }\textit{Left:} The neural network $\mathcal{M}''$ as in Figure \ref{fig:k=1,M->tildeM}. Note that $\mathcal{M}''$ is not layered, but every network obtained from $\mathcal{M}''$ by anchoring all but one of its input nodes is layered. \textit{Right:} Anchoring the inputs of $\mathcal{M}''$ at the nodes $v_{in},u_1, u_2, \dots , u_{\bar k-1}$ yields a layered neural network $\mathcal{N}$ with $L(\mathcal{N})=L(\mathcal{M})-1$. \label{fig:InFixk=1}}
\end{figure}

Finally, it remains to apply an input anchoring procedure to $\mathcal{M}''$, which will conclude the proof in a manner similar to the case $k\geq 2$.
Specifically, we use Lemma \ref{InputFixingLemma} to successively eliminate inputs of $\mathcal{M}''$, starting with $v_{in}$ (if present), and proceeding with $u_{1},\dots,u_{\bar{k}-1}$.
 If $D_1>1$, the network ${\mathcal{M}}''$ is not layered (unlike in the case $k\geq 2$ and the case $k=1$, $D_1=1$). However, every network obtained from $\mathcal{M}''$ by anchoring all but one of the input nodes $\{v_{in},u_1,\dots, u_{\bar k}\}$ is layered. This means that, when anchoring $v_{in}$, we do not find ourselves in the circumstance {(ii)} of Lemma \ref{InputFixingLemma}, as this would mean we have obtained a network $\mathcal{N}\in\mathscr{M}_{min}$ with strictly fewer nodes than $\mathcal{M}$.
Thus, after having anchored $v_{in}$, we are left with a layered network with inputs $u_{1},\dots,u_{\bar{k}}$. At this point we proceed completely analogously to the case $k\geq 2$ by successively eliminating the inputs $u_{1},\dots,u_{\bar{k}-1}$.
We are left with a non-degenerate clones-free LFNN ${\mathcal{N}}=(V^{\mathcal{N}},E^{\mathcal{N}},\{u_{\bar k}\},V_{out}^{\mathcal{N}},\Omega^{\mathcal{N}},\Theta^{\mathcal{N}})$ and a vector of real constants $\bm{a}$ (specifically, $\bm{a}\in \R^{\bar k}$ in the case $D_1>1$, and $\bm{a}\in\R^{\bar k -1}$ in the case $D_1=1$),
such that the function $h_{out}^{\mathcal{N}}:=\sum_{w\in V_{out}^{\mathcal{N}}}\lambda_{w} \outmapT{w}{\sigma}{\mathcal{N}}$ satisfies 
 
\begin{equation}\label{k=1FixedNetOut}
h_{out}^{\mathcal{N}}(t)=\widetilde{h}_{out}(\bm{a},t)-\sum_{w\in V_{out}^{\mathcal{M}''}\setminus V_{out}^{\mathcal{N}}}\lambda_{w} \outmapT{w}{\sigma}{\mathcal{M}''}(\bm{a},t)\;,\quad\forall t\in\R.
\end{equation}
A concrete example of this input anchoring procedure in the case $k\geq 2$ is shown schematically in Figure \ref{fig:InFixk=1}.
By Claim 2, the first term on the right-hand side of \eqref{k=1FixedNetOut} evaluates identically to $c$. 
Moreover, as input anchoring yields networks satisfying (IA-2),  
the values of the functions $\outmapT{w}{\sigma}{\mathcal{M}''}$, for $w \in V_{out}^{\mathcal{M}''}\setminus V_{out}^{\mathcal{N}}$, do not depend on the input at $u_{\bar k}$.
Therefore $h_{out}^{\mathcal{N}}\equiv c_{\mathcal{N}}$, for some $c_{\mathcal{N}}\in \R$.  
We have thus shown that the network $\mathcal{N}$ is in $\mathscr{M}$. But $L(\mathcal{N})=L(\mathcal{M})-1$, which stands in contradiction to the minimality of depth of the elements of $\mathscr{M}_{min}$, and therefore completes the proof of the theorem.
\end{proof}
\begin{proof}[Proof of Theorem \ref{MainUniqCor}]
Let $\mathcal{N}_j=(V^j,E^j,V_{in},V_{out}, \Omega^j,\Theta^j)$, $j\in\{1,2\}$, be networks as in the theorem statement.  
Let $\mathcal{N}=\mathcal{N}_1\vee \mathcal{N}_2$ be their amalgam and $\pi_j:V^{\mathcal{N}_j}\to \pi_j(V^{\mathcal{N}_j})\subset V^{\mathcal{N}}$ the extensional isomorphisms between $\mathcal{N}_j$ and the corresponding subnetworks of $\mathcal{N}$, for $j\in\{1,2\}$.
We start by claiming that $\pi_1(w)= \pi_2(w)$, for all $w\in V_{out}$. Indeed, suppose to the contrary that we have $\pi_1(w')\neq \pi_2(w')$, for some $w'\in V_{out}$, and denote $w_j=\pi_j(w')$, $j\in\{1,2\}$. Since $w_1\neq w_2$, it follows that $\mathcal{N}(w_1)$ and $\mathcal{N}(w_2)$ are not extensionally isomorphic, for otherwise $w_1$ and $w_2$ would be clones, contradicting the no-clones condition for $\mathcal{N}$. Now,
\begin{equation*}
\outmap{\mathcal{N}(w_1)}{\sigma}\!(\bm{t})-\outmap{\mathcal{N}(w_2)}{\sigma}\!(\bm{t})=\outmapT{w'}{\sigma}{\mathcal{N}_1}\!(\bm{t})-\outmapT{w'}{\sigma}{\mathcal{N}_2}\!(\bm{t})=0,\quad\text{for all }\bm{t}\in\R^{V_{in}},
\end{equation*}
by assumption. But this contradicts the conclusion of Theorem \ref{MainThm}, and thus establishes $\pi_1(w)= \pi_2(w)$, for all $w\in V_{out}$.
By non-degeneracy of $\mathcal{N}_1$, for every $v\in V^1$, there exists a $w\in V_{out}$ such that $v\in V^{\mathcal{N}_1(w)}$. Then $\pi_1(v)\in V^{\mathcal{N}(\pi_1(w))}=V^{\mathcal{N}(\pi_2(w))}=\pi_2 (V^{\mathcal{N}_2(w)})\subset \pi_2(V^2)$. Similarly, for every $v\in V^2$, we have $\pi_2(v)\in \pi_1(V^1)$. Thus, the function $\psi:V^1\to V^2$ given by $\psi=\pi_2^{-1}\circ \pi_1$ is well-defined. This function is invertible with inverse $\pi_1^{-1}\circ \pi_2$, so it is a bijection. Therefore $\psi$ is an extensional isomorphism between $\mathcal{N}_1$ and $\mathcal{N}_2$, by virtue of being a composition of two extensional isomorphisms. Moreover, we have $\psi(w)=\pi_2^{-1}( \pi_1(w))=w$, for all $w\in V_{out}$, so $\psi$ restricted to $V_{out}$ is the identity map, and thus $\psi$ is a faithful isomorphism.
\end{proof}
\section*{Acknowledgment}
The authors would like to thank Thomas Allard for useful suggestions regarding the proof of Proposition \ref{ApproxProp} and an anonymous reviewer for proposing a clearer exposition of Lemma  \ref{TorusWindingLemma}.

\bibliographystyle{IEEEtran} 
\bibliography{ref}

\section*{Appendix: proofs of auxiliary results}
\begin{proof}[Proof of Proposition \ref{amalgamprop}] 
Fix $\mathcal{N}_1$ and $\mathcal{N}_2$ as in the statement of the proposition. 
We begin by establishing the existence of a corresponding amalgam $\mathcal{A}$. 
Let $\mathscr{A}$ denote the set of all proto-amalgams of $\mathcal{N}_1$ and $\mathcal{N}_2$. To see that $\mathscr{A}$ is non-empty, consider the LFNN $\mathcal{N}=(V^\mathcal{N},E^{\mathcal{N}},V_{in}, V_{out}^{\mathcal{N}}, \Omega^\mathcal{N},\Theta^{\mathcal{N}})$ specified as follows:
\begin{itemize}[--]
\item Let $S$ be a set of cardinality $\#(V^1\setminus V_{in})+\#(V^2\setminus V_{in})$ disjoint from $V_{in}$, and set $V^{\mathcal{N}}\coleqq V_{in}\cup S$. Furthermore, let $\pi_j^{\,\mathcal{N}}:V^j\to \pi_j^{\,\mathcal{N}}(V^j)\subset V^{\mathcal{N}}$ be injective functions such that $\pi_j^{\,\mathcal{N}}(v)=v$, for $v\in V_{in}$, $j\in\{1,2\}$, and $\pi_1^{\,\mathcal{N}}(V^1\setminus V_{in})\cap \pi_2^{\,\mathcal{N}}(V^2\setminus V_{in})=\varnothing$, but otherwise arbitrary.
\item $E^{\mathcal{N}}\coleqq\bigcup_{j=1,2}\{(\pi_j^{\,\mathcal{N}}(v),\pi_j^{\,\mathcal{N}}(\widetilde{v})):v,\widetilde{v}\in V^{j}, (v,\tilde{v})\in E^{j}\}$.
\item $V^{\mathcal{N}}_{out}\coleqq\pi_1^{\mathcal{N}}(V_{out}^1)\cup \pi_2^{\mathcal{N}}(V_{out}^2)$.
\item For $j\in\{1,2\}$ and $v,\widetilde{v}\in V^{j}$ such that $(v,\tilde{v})\in E^{j}$, let $\omega_{\pi_j^{\,\mathcal{N}}(\widetilde{v})\pi_j^{\,\mathcal{N}}({v})}=\omega_{\widetilde{v}v}$, and set 
 
$
\Omega^{\mathcal{N}}\coleqq\left\{\omega_{vu} : (u,v)\in E^\mathcal{N}\right\}
$.
\item For $j=1,2$ and $v\in V^j\setminus V_{in}$, let $\theta_{\pi_j^{\,\mathcal{N}}(v)}=\theta_{v}$, and set  $\Theta^{\mathcal{N}}\coleqq\left\{\theta_{u}:u\in V^{\mathcal{N}}\setminus V_{in}\right\}$.
\end{itemize}
 
\noindent Informally, the network ${\mathcal{N}}$ is obtained by putting $\mathcal{N}_1$ and $\mathcal{N}_2$ ``side by side'', sharing only the input nodes $V_{in}$. As $\mathcal{N}_1$ and $\mathcal{N}_2$ are non-degenerate, so is  $\mathcal{N}$. Moreover, Properties (i) and (ii) of Definition \ref{def:amaldef} hold for $\mathcal{N}$ with $\pi_j^{\,\mathcal{N}}:V^j\to \pi_j(V^j)\subset V^\mathcal{N}$, for $j=1,2$.

Thus $\mathcal{N}$ is a proto-amalgam of $\mathcal{N}_1$ and $\mathcal{N}_2$, and so $\mathscr{A}\neq\varnothing$.
Now, let $\mathcal{A}=(V^{\mathcal{A}},E^{\mathcal{A}},V_{in}^{\mathcal{A}},V_{out}^{\mathcal{A}},\Omega^\mathcal{A}, \allowbreak \Theta^\mathcal{A})\in\mathscr{A}$ be a network with the least possible number of nodes among all the networks in $\mathscr{A}$, and let $\pi_j:V^j\to\pi_j(V^j)\subset V^{\mathcal{A}}$, for $j\in\{1,2\}$, be extensional isomorphisms between $\mathcal{N}_j$ and the appropriate subnetworks of $\mathcal{A}$. We now show that $\mathcal{A}$ is clones-free. To this end, suppose by way of contradiction that $c_1,c_2\in V^{\mathcal{A}}$ are clones. As $\mathcal{N}_1$ is clones-free, $c_1,c_2$ cannot both be in $\pi_1(V^1)$, for otherwise $\pi_1^{-1}(c_1)$ and $\pi_1^{-1}(c_2)$ would be clones in $\mathcal{N}_1$. By the same token,  $c_1,c_2$ cannot both be in $\pi_2(V^2)$. Thus, we may write w.l.o.g. $c_1=\pi_1(v_1)$ and $c_2=\pi_2(v_2)$, for some $v_1\in V^1$ and $v_2\in V^2$.
Now, let $\widetilde{\mathcal{A}}$ be the network obtained from $\mathcal{A}$ by making the following alterations:
\begin{itemize}[--]
\item For every edge $(c_2,v)\in E^{\mathcal{A}}$, where $v\in V^{\mathcal{A}}$, introduce a new edge $(c_1,v)$ together with the associated weight $\omega_{vc_2}$, and delete the edge $(c_2,v)$.
\item Delete the edges $(v,c_2)\in E^{\mathcal{A}}$, as well as the node $c_2$.
\item If $c_2$ was a node in $\pi_2(V_{out}^2)$, then add $c_1$ to the set $V_{out}^{\widetilde{\mathcal{A}}}$.
\end{itemize}
The network $\widetilde{\mathcal{A}}$ is a proto-amalgam of $\mathcal{N}_1$ and $\mathcal{N}_2$ via the extensional isomorphisms ${\widetilde{\pi}_1=\pi_1}$ and 
\begin{equation*}
\widetilde{\pi}_2(v)=
\begin{cases}
\pi_2(v),& v\in V^2 \setminus\{ \pi_2^{-1}(c_2)\}\\
c_1, & v= \pi_2^{-1}(c_2)
\end{cases},\quad \text{for }v\in V^{\mathcal{N}_2}.
\end{equation*}
But $\widetilde{\mathcal{A}}$ has strictly fewer nodes than $\mathcal{A}$, which contradicts the minimality of $\mathcal{A}$, and thereby establishes that $\mathcal{A}$ is clones-free, and hence $\mathcal{A}$ is an amalgam of $\mathcal{N}_1$ and $\mathcal{N}_2$, completing the proof of existence.
To establish uniqueness---up to extensional isomorphisms---of the amalgam, suppose that $\mathcal{A}$ and $\mathcal{A}'$ are both amalgams of $\mathcal{N}_1$ and $\mathcal{N}_2$ via extensional isomorphisms $\pi_j:V^j\to \pi_j(V^j)\subset  V^{\mathcal{A}}$, $\pi_j':V^j\to\pi_j'(V^j) \subset V^{\mathcal{A}'}$, for $j\in\{1,2\}$. We first show that
\begin{equation}\label{AmalgUniqConsis}
(\pi'_1\circ \pi_1^{-1})(v)=(\pi'_2\circ \pi_2^{-1})(v), \quad\text{for all }v\in \pi_1(V_1)\cap \pi_2(V_2),
\end{equation}
by induction on $\lvl_\mathcal{A}(v)$. If $v\in V_{in}$, then \eqref{AmalgUniqConsis} holds trivially as the restrictions of the maps $\pi_j$, ${\pi_j}'$, for $j\in\{1,2\}$, to the set $V_{in}$, both equal the identity map $\mathrm{id}_{V_{in}}$.
Now, let $L\geq 1$ and suppose that \eqref{AmalgUniqConsis} holds for all $u\in \pi_1(V_1)\cap \pi_2(V_2)$ with $\lvl_{\mathcal{A}}(u)<L$. Let $v\in\pi_1(V_1)\cap \pi_2(V_2)$ with $\lvl_{\mathcal{A}}(v)=L$, but otherwise arbitrary, and write $w_j=(\pi'_j\circ \pi_j^{-1})(v)$, for $j=1,2$. 
By Property (i) of Definition \ref{def:amaldef} for the amalgam $\mathcal{A}$ we have $\mathcal{N}_1 \eisom \mathcal{A}(\pi_1(V_{out}^1))$ and $\mathcal{N}_2 \eisom \mathcal{A}(\pi_2(V_{out}^2))$,  
and so $\mathcal{N}_1\left(\pi_1^{-1}(v)\right) \eisom \mathcal{A}(v)$ and  $\mathcal{N}_2\left(\pi_2^{-1}(v)\right) \eisom \mathcal{A}(v)$ by appropriately restricting $\pi_1$ and $\pi_2$.
 Similarly, $\mathcal{N}_1\left((\pi_1')^{-1}(w_1)\right)\eisom \mathcal{A}'(w_1)$ and $\mathcal{N}_2\left((\pi_2')^{-1}(w_2)\right)\eisom \mathcal{A}'(w_2)$.  
But $(\pi_j')^{-1}(w_j)=\pi_j^{-1}(v)$, and so $\mathcal{N}_j\left((\pi_j')^{-1}(w_j)\right)=\mathcal{N}_j\left(\pi_j^{-1}(v)\right)$, for $j\in\{1,2\}$. Therefore $\mathcal{A}'(w_1)\eisom \mathcal{A}(v)$ and $\mathcal{A}'(w_2)\eisom \mathcal{A}(v)$ via $\pi_1\circ (\pi_1')^{-1}$ and $\pi_2\circ (\pi_2')^{-1}$, respectively. Now, as  $\mathcal{A}'$ is an amalgam, it is clones-free, and thus we deduce that $w_1=w_2$, for otherwise $w_1$ and $w_2$ would be clones in $\mathcal{A}'$. This establishes \eqref{AmalgUniqConsis}.

Now define $\psi:V^{\mathcal{A}}\to V^{\mathcal{A}'}$ according to
\begin{equation}\label{eq:amalprop-uniq-psi}
\psi(v)=\begin{cases}
(\pi'_1\circ \pi_1^{-1})(v), & v\in \pi_1(V_1)\\
(\pi'_2\circ \pi_2^{-1})(v), & v\in \pi_2(V_2)
\end{cases}.
\end{equation}
It follows by \eqref{AmalgUniqConsis} that this definition is consistent, in the sense that the two cases in \eqref{eq:amalprop-uniq-psi} yield the same value for $\psi(v)$ when $v\in  \pi_1(V_1)\cap \pi_2(V_2)$. Now, Properties (i) and (ii) of Definition \ref{def:Isom} for $\psi$ follow, so $\psi$ is an extensional isomorphism between $\mathcal{A}$ and $\mathcal{A}'$, finishing the proof.
\end{proof}
\begin{proof}[Proof of Lemma \ref{NaturalDomainLemma}]
Denote by $\dom_\sigma=\C\setminus P$ the domain of holomorphy of $\sigma$. We proceed by induction on $\lvl(u)$. In the base case $\lvl(u)=0$, i.e., $u=v_{in}$, the claim is trivially true with $E_u=\varnothing$. Now suppose that $\lvl(u)\geq 1$, and assume the statement holds for all $v\in V$ with $\lvl(v)<\lvl(u)$, i.e., $\mathcal{D}_{\outmap{v}{\sigma}}=\C\setminus E_v$, where $E_v$ are closed countable subsets of $\C\setminus \R$. Set $E_u=\C\setminus \dom_{\outmap{u}{\sigma}}$. We will show that $E_u$ is a closed countable subset of $\C\setminus\R$. To this end, first note that $S:=\bigcup_{v\in\pre(u)}E_v$ is a closed countable subset of $\C\setminus\R$, and thus $\C\setminus S$  
is an open connected set containing $\R$.
We claim that if $z^*$ is a limit point of $E_u\setminus S$, then $z^*\in S$. Suppose otherwise, i.e., there exist a sequence $(z_n)_{n\in\N}$ of distinct elements of $E_u\setminus S$, and a point $z^*\in \C\setminus S$, such that $z_n\to z^*$. Define the function $f:\C\setminus S\to\C$, $f(z)=\sum_{v\in\pre(u)} \omega_{uv}\outmap{v}{\sigma}\!(z)+\theta_u$. As the functions $\outmap{v}{\sigma}$ are holomorphic on $\dom_{\outmap{v}{\sigma}}$, they are, in particular, continuous, and so $f$ is continuous. Therefore $f(z_n)\to f(z^*)$ as $n\to\infty$.
As 
\begin{equation*}
z_n\in E_u\setminus S=\bigcap_{v\in\pre(u)}\dom_{\outmap{v}{\sigma}}  \bigm\backslash \dom_{\outmap{u}{\sigma}},
\end{equation*}
 it follows by definition of natural domain that $f(z_n)\in P$, for all $n\in\N$.
Moreover, since $P$ is discrete, we deduce that there exists a point $p^*\in P$ such that $f(z_n)=p^*$, for all sufficiently large $n\in\N$. Now, since $\C\setminus S$ is connected and $f$ is holomorphic, it follows that $f(z)=p^*$, for all $z\in \C\setminus S$. But $0\in\R\subset \C\setminus S$, which thus implies $p^*=f(0)=\sum_{v\in\pre(u)} \omega_{uv}\outmap{v}{\sigma}\!(0)+\theta_u\in\R$, contradicting $P\subset\C\setminus\R$. This completes the proof that any limit point of $E_u\setminus S$ is contained in $S$.
Now define the sets 
$
E_u^N:=\{z\in E_u:|z|\leq N,\; d(z,S)\geq 1/N\Big\},\text{ for }N\in\N,
$
where $d$ denotes the Euclidean distance in $\C$. We see that $E_u^N$ is finite, for each $N\in\N$, for otherwise there would exist a sequence $(z_n)_{n\in\N}$ of distinct elements of $E_u^N$ converging to a point $z^*\in\C$. But then, by the claim above, we have $z^*\in S$, which contradicts $d(z_n,S)\geq 1/N$, for all $n\in \N$. We deduce that $E_u=S\cup \bigcup_{N\in\N}E_u^N$ is a closed countable set, and therefore $\dom_{\outmap{u}{\sigma}}=\C\setminus E_u$ is an open connected set. To see that $\dom_{\outmap{u}{\sigma}}\supset\R$, note that, for $z\in\R$, we have $z\in \C\setminus S=\bigcap_{v\in\pre(u)}\dom_{\outmap{v}{\sigma}} $, and $f(z)\in \R\subset \dom_\sigma$, so $z\in \dom_{\outmap{u}{\sigma}}$.
\end{proof}

\begin{proof}[Proof of Lemma \ref{EasyContinuation}] 
Let $\bm{a}$, $\delta$, and $T$ be as in the statement of the lemma, such that $D_k^\circ(\bm{a},\delta)\subset \mathcal{U}$ and $F\vert_{T}\equiv 0$. Then the function $F_{\bm{a}}\coleqq F(\,\cdot\, + \bm{a})$ is holomorphic on $\mathcal{U}-\bm{a}$, and $F_{\bm{a}}\vert_{T-\bm{a}}\equiv 0$. Thus, as $F\vert_{\mathcal{U}}\equiv 0$ if and only if $F_{\bm{a}}\vert_{\mathcal{U}-\bm{a}}\equiv 0$, it suffices to prove the result for $\bm{a}=\bm{0}$. Let $T_0\coleqq T$, $T_k\coleqq D^{\circ}_{k}(\bm{0},\delta)$, and, for $r=1,\dots,k-1$, define the sets 
\begin{equation*}
T_r=\{(iz_1,\dots,iz_{k-r},s_{k-r+1},\dots,s_{k}): z_j\in(-\delta,\delta),\forall j;\; s_j\in D_{1}^\circ(0,\delta),\forall j\}.
\end{equation*}
Note that $T_r\subset D^{\circ}_{k}(\bm{0},\delta)\subset \mathcal{U}$, for $r\in\{0,\dots, k\}$. We establish by induction over $r$ that $F\vert_{T_r}\equiv 0$, $r\in\{0,\dots, k\}$. The base case $F\vert_{T_0}\equiv 0$ holds by assumption. So suppose that $F\vert_{T_r}\equiv 0$, for some $r\in\{0,\dots,k-1\}$.
If $0 \leq r< k-1$, fix arbitrary $z_j\in(-\delta,\delta)$, for $j\in\{1,\dots,k-r-1\}$. Similarly, if $0<r\leq k-1$, fix arbitrary $s_j\in D_{1}^\circ(0,\delta)$, for $j\in\{k-r+1,\dots, k\}$. Consider the function $G:D_{1}^\circ(0,\delta)\to \C$ defined by
\begin{equation*}
G(z)=\begin{cases}
F(iz_1,\dots,iz_{k-1},iz),&\text{if } r=0\\
F(iz_1,\dots,iz_{k-r-1},iz,s_{k-r+1},\dots,s_{k}),&\text{if } 1\leq r<k-1\\
F(iz,s_{2},\dots,s_{k}),&\text{if } r=k-1
\end{cases}.
\end{equation*}
Note that $G$ is holomorphic, and $G\vert_{(-\delta,\delta)}\,\equiv 0$ by the induction hypothesis. Since the zero set of a nonzero holomorphic function in one variable does not have a limit point in the domain, we deduce that $G\vert_{D_{1}^\circ(0,\delta)}\equiv 0$. But $z_j$ and $s_j$ were arbitrary, so we have $F\vert_{T_{r+1}}\equiv 0$.
We have thus shown that $F$ is identically zero on an open subset $T_k=D^{\circ}_{k}(\bm{0},\delta)$ of its connected domain $\mathcal{U}$, and so, by the multivariate identity theorem \cite[1.2.12]{Scheidemann2005}, it must be identically zero on $\mathcal{U}$.
\end{proof}
\begin{proof}[Proof of Lemma \ref{TrickyContinuation}]
Let $t^*$, $\bm{a}$, $\delta$, $T$, and $\widetilde{T}$ be as in the statement of the lemma, such that $D_k^\circ(\bm{a},\delta)\subset \mathcal{U}$, $\widetilde{T}\subset (\C\setminus\{t^*\})\times \C^k$, $\clo(\widetilde{T})\supset T$, and $F\vert_{\widetilde T}\equiv 0$, and denote $\mathcal{V}\coleqq D^{\circ}_{1+k}(\bm{a},\delta)$. The function $F_{(t^*\!,\,\bm{a})}=F(\,\cdot\, + (t^*,\bm{a}))$ is holomorphic on $\mathcal{U}-(t^*,\bm{a})$, and the sets
\begin{equation*}
T_{(t^*\!,\,\bm{a})}\coleqq T-(t^*,\bm{a})=\{(0,iz_1,\dots,iz_k): z_j\in(-\delta,\delta),\,j=1,\dots, k\}
\end{equation*}
 and $\widetilde{T}_{(t^*\!,\,\bm{a})}\coleqq \widetilde{T}-(t^*,\bm{a})$ satisfy $\widetilde{T}_{(t^*\!,\,\bm{a})}\subset (\C\setminus\{0\})\times \C^k$, $\clo(\widetilde{T}_{(t^*\!,\,\bm{a})})\supset T_{(t^*\!,\,\bm{a})}$, and $F_{(t^*\!,\,\bm{a})}\vert_{{\widetilde T}_{(t^*\!,\,\bm{a})}}\equiv 0$.
Therefore, as $F\vert_{\mathcal{U}}\equiv 0$ if and only if $F_{(t^*\!,\,\bm{a})}\vert_{\mathcal{U}-(t^*\!,\,\bm{a})}\equiv 0$, and $(t^*\!,\bm{a})$ was arbitrary, it suffices to prove the result for $(t^*\!,\bm{a})=(0,\bm{0})$. 
Assume by way of contradiction that $F\vert_{\mathcal{V}}$ is not identically 0. Then, by inspection of the power series expansion of $F$ in the open neighborhood $\mathcal{V}$ of $(0,\bm{0})$, we obtain that there exists a maximal $p\in\N_0$ such that $z_0^{-p}F(z_0,z_1,\dots,z_k) $ is holomorphic in $\mathcal{V}$. Write $G(z_0,z_1,\dots,z_k)=z_0^{-p}F(z_0,z_1,\dots,z_k)$, with $G:\mathcal{V}\to \C$ holomorphic and not identically 0. Now, due to $\widetilde{T}\subset (\C\setminus\{0\})\times \C^k$, we have $z_0\neq 0$, for every $(z_0,z_1,\dots,z_k)\in \widetilde{T}$. Moreover, as $F\vert_{\widetilde T}\equiv 0$, we have $G(z_0,z_1,\dots,z_k)= z_0^{-p}\cdot 0=0$, for all $(z_0,z_1,\dots,z_k)\in \widetilde{T}$. Now, since $G$ is continuous and $\clo(\widetilde{T}) \supset T$ by assumption, it follows that $G(0,z_1,\dots,z_k)=0$, for all $(0,z_1,\dots,z_k)\in T$. The mapping $(z_1,\dots,z_k)\mapsto G(0,z_1,\dots,z_k)$ is holomorphic on $D_k^\circ(\bm{0},\delta)$ and identically zero on the set
\begin{equation*}
\{(iz_1,\dots,iz_k): z_j\in(-\delta,\delta),\,j=1,\dots, k\},
\end{equation*}
and so, by Lemma \ref{EasyContinuation}, we obtain $G(0,z_1,\dots,z_k)=0$, for all $(0,z_1,\dots,z_k)\in \mathcal{V}$. 
By inspection of the power series expansion of $G$ in $\mathcal{V}$, we find that $G$ must have the form $G(z_0,z_1,\dots,z_k)=z_0\,  \frac{\partial G}{\partial z_0}(z_0,z_1,\dots,z_k)$. As the function $\frac{\partial G}{\partial z_0}$ is holomorphic in $\mathcal{V}$, we have that $z_0^{-(p+1)}F(z_0,\dots,z_k)= \frac{\partial G}{\partial z_0}(z_0,\dots,z_k)$ is holomorphic in $\mathcal{V}$, contradicting the maximality of $p$. Our hypothesis that $F\vert_{\mathcal{V}}$ is not identically zero must hence be false, i.e., we have $F\vert_{\mathcal{V}}\equiv 0$. Finally, by the multivariate identity theorem \cite[1.2.12]{Scheidemann2005}, we deduce that $F\vert_{\mathcal{U}}\equiv 0$.
\end{proof}
\begin{proof}[Proof of Lemma \ref{TorusWindingLemma}]
First note that $M$ is the closure of a one-parameter subgroup of $T^d=\R^d/\Z^d$. Since $T^d$ is compact and abelian, so is $M$. Moreover, $M$ is connected (as the closure of a connected set), and so, by \cite[Theorem 11.2]{Hall2015}, it is itself isomorphic to a torus. It remains to determine its dimension. A character on a compact abelian group $G$ is a continuous group homomorphism $\chi:G\to S^1$, where $S^1=\{z\in\C:|z|=1\}$ is the multiplicative circle group, and we denote by $\widehat{G}$ the set of all characters on $G$.
We claim that
\begin{equation}\label{1ndKerIntersect}
M=\bigcap_{\substack{\chi\in\widehat{T^d} \\ M\subset  \ker (\chi) }}\ker(\chi).
\end{equation}
The inclusion of $M$ in the right-hand side is clear, so we only need to show the reverse inclusion. Note that, since $M$ is closed, $T^d/M$ is a Lie group.  
We will rewrite the right-hand side of \eqref{1ndKerIntersect} by establishing a bijective correspondence between the characters $\chi:T^d\to S^1$ such that $M\subset \ker(\chi)$, and the characters $f:T^d/M\to S^1$. To this end, let $\pi: T^d\to T^d/M$ be the projection map, and suppose that $\chi:T^d\to S^1$ is a character such that $M\subset \ker(\chi)$. Then $\chi$ factors according to $\chi=f\circ \pi$, for some continuous homomorphism $f:T^d/M\to S^1$, in other words, $f$ is a character on $T^d/M$. Conversely, for any such $f$ we have that $f\circ \pi$ is a character $\chi$ on $T^d$ with $M\subset \ker(\chi)$. Therefore it suffices to show that
\begin{equation}\label{2ndKerIntersect}
\bigcap_{f\in\widehat{T^d/M}}\ker(f)=\{0\}.
\end{equation}
Indeed, if this is the case, then
\begin{equation*}
M=\pi^{-1}(\{0\})=\bigcap_{f\in\widehat{T^d/M}}\pi^{-1}(\ker(f))\supset\bigcap_{f\in\widehat{T^d/M}}\ker(f\circ \pi)=\bigcap_{\substack{\chi\in\widehat{T^d} \\ M\subset  \ker (\chi) }}\ker(\chi),
\end{equation*}
as desired. We thus proceed to establishing \eqref{2ndKerIntersect}. First note that, as $T^d$ is compact, connected, and abelian, then so is $T^d/M$, and thus by \cite[Theorem 11.2]{Hall2015} we have that $T^d/M$ is isomorphic (as a Lie group) to the torus $T^r$ of some dimension $r\geq 0$. Now suppose that $(u_1,u_2,\dots,u_r)\in T^r$ is such that $f(u_1,u_2,\dots,u_r)=1$, for all characters $f:T^r\to S^1$. Our goal is to show that $u_j=0 \hspace*{-1mm}\mod \Z$, for all $j=1,\dots,r$. For a given $j\in\{1,\dots, r\}$ let $f_{j}(t_1,t_2,\dots,t_r)=e^{2\pi i t_j}$. Since $f_{j}:T^r\to S^1$ is a character, we have $1=f_{j}(u_1,\dots,u_r)=e^{2\pi i u_j}$, and thus $u_j=0\hspace*{-1mm}\mod \Z$. Since this holds for all $j$, we have \eqref{2ndKerIntersect}, and therefore also \eqref{1ndKerIntersect}.
Note that any character on $T^d$ has the form
\begin{equation}\label{character-form}
\chi_{\bm{m}}(t_1,t_2,\dots,t_d)=e^{2\pi i(m_1t_1+m_2t_2+\,\dots\,+m_dt_d)},\quad \text{for }(t_1,\dots,t_d)\in T^d,
\end{equation}
where $\bm{m}=(m_1,m_2,\dots, m_d)\in \Z^d$ (this is easily seen for $d=1$, and follows by induction for other values of $d$). Now, for any character $\chi_{\bm{m}}:T^d\to S^1$ such that $M\subset \ker(\chi_{\bm{m}})$, we have 
\begin{equation*}
1=\chi_{\bm{m}}(\alpha_1t,\alpha_2t,\dots,\alpha_dt)=e^{2\pi i (m_1\alpha_1+m_2\alpha_2 +\,\dots\,+ m_d\alpha_d)t},\quad\text{for all }t\in\R,
\end{equation*}
by definition of $M$, which is equivalent to
\begin{equation*}
m_1\alpha_1+m_2\alpha_2 +\dots +m_d\alpha_d=0.
\end{equation*}
It follows immediately that $Z=\{\bm{m}\in \Z^d:\chi_{\bm{m}}\in\widehat{T^d}, M\subset \ker(\chi)\}$ is a free abelian group of dimension $r=n-k$, where $k=\dim\langle \alpha_1,\dots,\alpha_d\rangle_{\Q}$. We can thus pick a basis $\{\bm{m}^1,\dots, \bm{m}^r\}$ for $Z$, and then, for any character $\chi_{\bm{m}}$ with $\bm{m}\in Z$, we have $\chi_{\bm{m}}=\chi_{\bm{m}^1}^{n_1} \dots\chi_{\bm{m}^r}^{n_r}$, for some $n_1,\dots, n_r\in \Z^r$. Therefore $M$ is the kernel of the continuous surjective homomorphism $\Phi:T^n\to S^r$ given by $\Phi=(\chi_{\bm{m}^1},\dots,\chi_{\bm{m}^r})$, and hence its dimension is $n-r=k$, as desired.
\end{proof}
\begin{proof}[Proof of Lemma \ref{MainTorusLemma}]
Define the following subsets of $T^d$:
\begin{equation*}
\begin{aligned}
M&=\{(\alpha_1t,\alpha_2t,\dots,\alpha_dt) +\Z^d:t\in \R \},\\
M_R&=\{(\alpha_1t,\alpha_2t,\dots,\alpha_dt) +\Z^d:t\in \R\setminus[-R,R]\},\;\text{for }R>0,\text{ and}\\
M'&=\{Q\cdot(u_1,\dots,u_k) + \Z^d:u_1,\dots,u_k\in \R \},
\end{aligned}
\end{equation*}
as well as the map $\Phi:\R^k\to T^d$
\begin{equation}
\begin{aligned}
\Phi(u_1,\dots,u_k)&=Q\cdot(u_1,\dots,u_k) + \Z^d\notag\\
&=\left(u_1,\dots,u_k,\sum_{j=1}^k q_{k+1,j}u_j,\dots,\sum_{j=1}^k q_{d,j}u_j \right) + \Z^d.\label{TorFormOfPhi}
\end{aligned}
\end{equation}
Let $K=\ker \Phi$, and note that $M'$ is the image of $\Phi$. Further, note that $K$ is an abelian group, and a subgroup of $\Z^k$. For $j=1,\dots, k$, let $N_j\in\Z$ be such that $q_{pj}N_j\in\Z$, for all $p=1,\dots,d$. Let $\bm{e}_j\in\R^k$ be the vector with $N_j$ in the $j$-th entry, and $0$ in all the other entries. Then $\Phi(\bm{e}_j)=\bm{0}+\Z^d$, for all $j=1,\dots, k$, so $E:=\{\bm{e}_1,\dots,\bm{e}_k\}\subset K$. Moreover, $E$ is a basis for $\R^k$, so $K$ is a lattice of rank $k$. Therefore $M'$ and $\R^k/K$ are isomorphic as groups via the induced map
\begin{equation*}
\widetilde{\Phi}:\R^k/K\to M',\quad \bm{u}+K\mapsto Q\cdot\bm{u}.
\end{equation*}
Since $\widetilde{\Phi}$ is a continuous bijection, $\R^k/K$ is compact, and $T^d$ is Hausdorff, it follows that the map $\widetilde{\Phi}$ is, in fact, a Lie group isomorphism (when $M'$ is equipped with the subspace topology inherited from $T^d$). In particular, $M'$ is a torus of dimension $k$. Let $\{\bm{b}_1,\dots,\bm{b}_k\}$ be a basis for $K$, and let 
\begin{equation*}
B=\left\{c_1\bm{b}_1+\dots +c_k\bm{b}_k:c_1,\dots,c_k\in\Big[-\frac{1}{2},\frac{1}{2}\Big)\right\}\subset\R^k
\end{equation*}
be a fundamental domain of the lattice $K$. Then, for any $\bm{u}\in\R^k$ we can write $\bm{u}=\bm{b}+\bm{k}$ with $\bm{b}\in B$ and $\bm{k}\in K$. We will prove the lemma with 
\begin{equation*}
C=\left\{(u_1/\alpha_1,\dots,u_k/\alpha_k): (u_1,\dots,u_k)\in \mathrm{int}(B)\right\},
\end{equation*} where $\mathrm{int}(B)$ denotes the interior of $B$. Note that $C$ is open and $0\in C$. 
For $t\in\R$ we have
\begin{equation}
\begin{aligned}
(\alpha_1t,\alpha_2t,\dots,\alpha_dt)+\Z^d&= \left(\alpha_1t,\dots,\alpha_k t,\sum_{j=1}^k q_{k+1,j}\alpha_jt,\dots,\sum_{j=1}^k q_{d,j}\alpha_jt\right)+\Z^d \label{eq:MsubsM'}\\
&=Q\cdot (\alpha_1t,\alpha_2t,\dots,\alpha_kt)+\Z^d\in M',
\end{aligned}
\end{equation}
and so $M\subset M'$. Moreover, by Lemma \ref{TorusWindingLemma} we have that $\clo (M)$ is a torus of dimension $k$, so we deduce $\clo (M)=M'$. We next establish that $\clo(M_R)=M'$, for every $R>0$. To this end, we distinguish between the cases $k=1$ and $k\geq 2$.

 \textit{The case $k=1$.} Let $(\alpha_1t,\alpha_2t,\dots,\alpha_dt) +\Z^d$, $t\in \R$, be an arbitrary element of $M$. As $\dim\langle \alpha_1,\dots,\allowbreak \alpha_d\rangle_{\Q}=k=1$, there exist $a\in \R\setminus\{0\}$ and $m_1,\dots,m_d\in \Z$ such that $(\alpha_1,\alpha_2,\dots,\alpha_d)=(am_1,am_2,\dots,am_d)$. Now let $n\in\Z$ be an integer such that $t+n/a\notin [-R,R]$. Then
\begin{equation*}
\begin{aligned}
(\alpha_1t,\alpha_2t,\dots,\alpha_dt) +\Z^d &=(\alpha_1t,\alpha_2t,\dots,\alpha_dt) +(nm_1,nm_2,\dots,nm_d)+\Z^d\\
&=\left(\alpha_1\left(t+\frac{n}{a}\right),\alpha_2\left(t+\frac{n}{a}\right),\dots,\alpha_d\left(t+\frac{n}{a}\right)\right) +\Z^d\in M_R.
\end{aligned}
\end{equation*}
Therefore $M_R=M$, and so $\clo(M_R)=\clo(M)=M'$.

 \textit{The case $k\geq 2$.} First note that
\begin{equation*}
L_R\coleqq M\setminus M_R=\{(\alpha_1t,\alpha_2t,\dots,\alpha_dt) +\Z^d:t\in[-R,R]\}
\end{equation*}
is the image of $[-R,R]\subset\R$ under a continuous bijective map from $\R$ to $T^d$. Since $[-R,R]\subset\R$ is compact and $T^d$ is Hausdorff, it follows by \cite[Cor. 15.1.7]{Garling2013} that $L_R$ is homeomorphic to $[-R,R]$. In particular, $L_R$ is a 1-dimensional submanifold of $M$ with boundary. Now, by general properties of the closure, we have $\clo(M_R)=\clo(M\setminus L_R)\supset \clo(M)\setminus \clo(L_R)=M'\setminus L_R$.
Therefore, as $M'$ has dimension $k>1$ and $L_R$ has dimension 1, we have $\clo(M_R)=\clo(\clo(M_R))\supset\clo(M'\setminus L_R)=M'$. On the other hand, $\clo(M_R)\subset \clo(M)=M'$, and thus $\clo(M_R)=M'$, as desired.
Now fix some $\bm{s}=(u_1/\alpha_1,\dots,u_k/\alpha_k)\in C$, where $\bm{u}=(u_1,\dots,u_k)\in \mathrm{int}(B)$. Since $M_R$ is dense in $M'$, for every $R>0$, there exists a sequence $(t^{n,\bm{s}})_{n\in\N}$ in $\R$ with $|t^{n,\bm{s}}|\to\infty$ such that 
\begin{equation}\label{MTLeq2}
(\alpha_1t^{n,\bm{s}},\alpha_2t^{n,\bm{s}},\dots,\alpha_dt^{n,\bm{s}})+\Z^d \to Q\cdot\bm{u}+ \Z^d.
\end{equation}
As $M\subset M'$, there exists a sequence $(\widetilde{\bm{u}}^{n,\bm{s}})_{n\in\N}$ such that
\begin{equation}\label{MTLeq3}
(\alpha_1t^{n,\bm{s}},\alpha_2t^{n,\bm{s}},\dots,\alpha_dt^{n,\bm{s}})+\Z^d=Q\cdot \widetilde{\bm{u}}^{n,\bm{s}}+\Z^d,
\end{equation}
for all $n\in\N$.  With this, \eqref{MTLeq2} reads
\begin{equation*}
Q\cdot \widetilde{\bm{u}}^{n,\bm{s}}+\Z^d\to Q\cdot\bm{u} + \Z^d,
\end{equation*}
and after applying the isomorphism $\widetilde{\Phi}^{-1}$, we obtain $\widetilde{\bm{u}}^{n,\bm{s}}+K\to \bm{u} + K$
as $n\to\infty$. Now, for each $n\in\N$, let $\bm{u}^{n,\bm{s}}=(u_1^{n,\bm{s}},\dots,u_k^{n,\bm{s}})\in B$ be such that $\bm{u}^{n,\bm{s}}-\widetilde{\bm{u}}^{n,\bm{s}}\in K$. Then we have ${\bm{u}}^{n,\bm{s}}+K\to \bm{u} + K$ as $n\to\infty$. Since $\bm{u}\in \mathrm{int}(B)$, there exists an $n_0\in\N$ such that ${\bm{u}}^{n,\bm{s}}\in \mathrm{int}(B)$, for $n\geq n_0$. By discarding the first $n_0$ terms of the sequences $(t^{n,\bm{s}})_{n\in\N}$ and $(\widetilde{\bm{u}}^{n,\bm{s}})_{n\in\N}$, we may assume w.l.o.g. that $n_0=0$. It follows that $\bm{u}^{n,\bm{s}}\to\bm{u}$ as $n\to\infty$. Now define $\bm{r}^{n,\bm{s}}=(u_1^{n,\bm{s}}/\alpha_1,\dots,u_k^{n,\bm{s}}/\alpha_k)$. We then have $\bm{r}^{n,\bm{s}}\in C$, $\bm{r}^{n,\bm{s}}\to \bm{s}$, and \eqref{MTLeq3} yields
\begin{equation*}
(\alpha_1t^{n,\bm{s}},\alpha_2t^{n,\bm{s}},\dots,\alpha_dt^{n,\bm{s}})+\Z^d=\Phi(\widetilde{\bm{u}}^{n,\bm{s}})=\Phi({\bm{u}}^{n,\bm{s}})=Q\cdot (\alpha_1 r_1^{n,\bm{s}},\dots,\alpha_k r_k^{n,\bm{s}})+\Z^d,
\end{equation*}
as desired.
\end{proof}
\end{document}